\documentclass[11pt,reqno]{amsart}
\usepackage{}
\usepackage{esint}
\usepackage{bbm}
\usepackage{bm}
\usepackage{amssymb}
\usepackage{mathrsfs}
\usepackage{amsfonts}
\usepackage{amsfonts,amssymb,amsmath,amsthm}
\usepackage{url}
\usepackage{enumerate}
\usepackage[pdftex,bookmarksnumbered]{hyperref}
\usepackage{graphicx,xcolor}
\usepackage{pgfplots}
\usepackage{float}
\usepackage{caption}
\usepackage{ulem}
\usepackage{amsmath}
\usepackage{mathtools}
\makeatletter 
\def\@captype{figure} 
\makeatother  

\usetikzlibrary{patterns,arrows,positioning,calc,fadings,shapes,decorations.markings}
\usepackage{tikz}

\usepackage{multicol}

\newtheorem{theorem}{Theorem}[section]
\newtheorem{lemma}[theorem]{Lemma}

\newtheorem{corollary}[theorem]{Corollary}
\theoremstyle{definition}
\newtheorem{definition}[theorem]{Definition}
\newtheorem{remark}{Remark}

\numberwithin{equation}{section}

\usepackage[left=2.0cm, right=2.0cm, top=2.0cm, bottom=2.0cm]{geometry}

\usepackage{graphicx}%
\usepackage{color}
\usepackage{cite}
\usepackage{hyperref}
\hypersetup{
	colorlinks = true,%
	citecolor = blue,
	filecolor=red,%
	linkcolor = [rgb]{0.65,0.0,0.0},%
	anchorcolor = red,
	pagecolor = red,
	urlcolor= [rgb]{0.65,0.0,0.0}
	linktocpage=true,
	pdfpagelabels=true,
	bookmarksnumbered=true,
}

\makeatother

\makeatletter
\@namedef{subjclassname@2020}{\textup{2020} Mathematics Subject Classification}
\makeatother

\DeclareMathOperator{\m}{\mathfrak{m}}
\DeclareMathOperator{\dm}{d\mathfrak{m}}

\DeclareMathOperator{\dd}{d}

\DeclareMathOperator{\supp}{supp}

\DeclareMathOperator{\dii}{div}

\DeclareMathOperator{\FMMM}{\mathsf{FMMM}}

\newcommand{\Lip}{\mathsf{Lip}}
\newcommand{\dil}{\mathsf{dil}}

\author{Benling Li}
\address{
School of Mathematics and Statistics\\
Ningbo University\\
315211 Ningbo, China}
\email{libenling@nbu.edu.cn}

\author{Zhenkai Xu}
\address{
School of Mathematics and Statistics\\
Ningbo University\\
315211 Ningbo, China}
\email{xuzhenkai666@163.com}

\date{\today}

\keywords{ Caffarelli--Kohn--Nirenberg inequality; Heisenberg--Pauli--Weyl uncertainty principle; Finsler manifold; Hilbert's fourth problem;  zero flag curvature}

\thanks{B. Li is supported by Natural Science Foundation of Ningbo (No.~2024J017) and  National Natural Science Foundation of China (No.~12471045). }

\subjclass[2020]{Primary 26D10, Secondary 53C60, 53C23}
\begin{document}

\title[Sharp Constants and Failure Threshold for Uncertainty Principles on Finsler Manifolds]{Sharp Constants, Threshold, and Failure of Uncertainty Principles on Non-trivial Flat Finsler Manifolds}

\begin{abstract}
By uncertainty principles we mean the generalized Caffarelli--Kohn--Nirenberg (CKN) inequality and the Heisenberg--Pauli--Weyl (HPW) uncertainty principle, with the latter being an endpoint case and the Hardy inequality a further limiting case.
We study these \(L^p\)-type inequalities \((1<p<n)\) on \(n\)-dimensional non-trivial flat Finsler manifolds, which arise from Hilbert's fourth problem. These manifolds are forward complete but not backward complete, projectively flat, with zero flag curvature, and have infinite reversibility, making them highly asymmetric. They are the simplest non-Riemannian, non-Minkowskian solutions to Hilbert's fourth problem, providing a natural framework in which classical inequalities exhibit new behavior.

For the full class, we establish a sharp threshold at the parameter value \(s=2\): the generalized CKN inequality and the HPW uncertainty principle hold for \(s>2\) but fail completely for \(s\le2\). When the generating Minkowski norm is reversible (while the Finsler metric itself remains irreversible with infinite reversibility), we determine the sharp constants for \(s>2\), which were previously unknown.  These constants are approached in the limit by compactly supported radial functions. As a corollary, our results cover Berwald's metric space and resolve the open questions on its sharp constants. These findings reveal that the threshold phenomenon is structural for the entire class and is a genuinely non-Riemannian feature with no analogue in the Riemannian flat case.
\end{abstract}
\maketitle

\section{Introduction}
The Caffarelli--Kohn--Nirenberg (CKN) inequality and the Heisenberg--Pauli--Weyl (HPW) uncertainty principle (also called the HPW inequality) are two cornerstones of modern analysis and mathematical physics. In $\mathbb{R}^n$, the classical CKN inequality \cite{CKN} unifies the Hardy inequality, the Sobolev inequality, and the HPW principle within a single framework. In its sharp form,
\begin{equation}\label{eq:CKNclassic}
\left( \int_{\mathbb{R}^n} |\nabla u|^2 {\dd}x \right)^{\frac{1}{2}}
\left( \int_{\mathbb{R}^n} \frac{|u|^{2(a-1)}}{|x|^{2(q-1)}} {\dd}x \right)^{\frac{1}{2}}
\ge \frac{n-q}{a} \int_{\mathbb{R}^n} \frac{|u|^a}{|x|^q} {\dd}x,
\end{equation}
for $0<q<2<a$ and $2<n<\frac{2(a-q)}{a-2}$. The constant $\frac{n-q}{a}$ is sharp, with extremals $u(x)=\left(C+|x|^{2-q}\right)^{\frac{1}{2-a}}$, $C>0$, up to scalar multiplication. As $a\to 2$ and $q\to 0$, \eqref{eq:CKNclassic} degenerates to the HPW principle, whose sharp constant $n/2$ is attained by Gaussians.

In the Riemannian setting, the sharp constants of the CKN and HPW inequalities are well understood. Carron \cite{Carron} initiated the study of weighted Hardy inequalities on manifolds, and Krist\'aly \cite{Kristaly2018} proved that on Riemannian Cartan--Hadamard manifolds the sharp Euclidean constants persist, with extremals forcing flatness. For further developments on Hardy and CKN-type inequalities in curved spaces, we refer to \cite{Berchio,CKZ,CFL,CFLL,Dambrosio,FFK, Kombe1,Kombe2,KKPZ,Yang,KristalyRepovs,doCarmo,Xia,KristalyOhta}.

As Chern \cite{Ch1} remarked, Finsler geometry is just Riemannian geometry without the quadratic restriction. In this broader setting, the picture is fundamentally different. The asymmetry of the metric leads to new phenomena with no Riemannian analogue.
Huang--Krist\'aly--Zhao \cite{HKZ} showed that reversibility, $S$-curvature, and flag curvature govern the validity of uncertainty principles and the existence of extremals. More related results on the validity of CKN/HPW inequalities can be found in \cite{Kristaly2015,KristalyRepovs,MST2020,MPV2020,WeiWu2020,HanXu2024, ZhaoHardy}.
More recent progress on sharp \(L^p\)-uncertainty principles on general Finsler measure spaces can be found in the preprint \cite{XiaLi2026}.
However, on some Finsler manifolds these inequalities fail completely. The first such example was found by Krist\'aly--Rudas \cite{KR} on the Funk metric space over the Euclidean unit ball, where the classical Hardy, HPW, and CKN inequalities fail; this was later extended by Krist\'aly--Li--Zhao \cite{KLZ} to general Funk metric spaces over bounded convex domains and to more general Finsler settings. The underlying reasons are a dominance condition of the $S$-curvature over the Ricci curvature and the infinite reversibility of the metric.

These results leave open a basic question: in the regime of infinite reversibility, can sharp functional inequalities still hold, and what are their optimal constants? The known counterexamples occur on Funk metric spaces, whose geodesics are straight and whose flag curvature is constant and negative. This makes the zero flag curvature case a natural testing ground: there the Riemannian picture is rigid, but the Finsler framework is remarkably rich. This is precisely the setting of Hilbert's fourth problem, which asks for a classification of metric geometries in which straight lines are shortest paths. His original formulation sought geometries that are flat in a weak sense, extending beyond the rigidity of Euclidean and Minkowski spaces. In the smooth setting, this is exactly the class of projectively flat Finsler manifolds. Unlike the Riemannian case---where Beltrami's theorem equates projective flatness with constant sectional curvature---the Finsler framework admits projectively flat metrics of nonconstant flag curvature, as well as constant curvature metrics that are not projectively flat.

Building on the classical works of Berwald \cite{Be1,Be2}, Bryant \cite{Bryant1,Bryant2}, Shen \cite{Sh1}, and Li \cite{Li},  Li--Zhao \cite{LiZhao_Hilbert4} recently obtained a complete classification of forward complete, projectively flat Finsler manifolds with constant flag curvature.  Moreover, the same classification shows that, in the zero flag curvature case, the entire metric is generated by two positively $1$-homogeneous functions at a reference point (without loss of generality, the origin $\mathbf{0}$): the Minkowski norm $\psi(y):=F(\mathbf{0},y)$ and the projective factor $\phi(y):=P(\mathbf{0},y)$. More precisely, the projective factor $P(x,y)$ is the unique solution to
\begin{equation}\label{Pgene}
P(x,y)=\phi(y+xP(x,y)),
\end{equation}
and the Finsler metric is reconstructed by
\begin{equation}\label{Fgene}
F(x,y)=\psi(y+xP(x,y))\{1+x^kP_{y^k}(x,y)\}.
\end{equation}
For a metric that is forward complete but not backward complete and satisfies $\phi(x)>0$ for all $x\neq\mathbf{0}$, the domain $\Omega$ is necessarily of the form
\[
\Omega=\{x\in\mathbb{R}^n:\phi(x)<1\}.
\]
Thus the classification reveals a fundamental dichotomy: either the manifold is a Minkowski space, or it is a genuinely non-Riemannian, forward complete but \emph{not backward complete} object. The latter are precisely the simplest solutions to Hilbert's fourth problem that are neither Minkowski nor Riemannian; we call them  \textit{non-trivial flat} Finsler manifolds.
We denote by $\lambda_\psi$ the reversibility of $\psi$, and call $\psi$ the generating Minkowski norm of $F$; these quantities will appear naturally in the sharp constants below.

A striking example is Berwald's metric on the Euclidean unit ball $\mathbb{B}^n_1(\mathbf{0})$, introduced by Berwald \cite{Be1,Be2},
\begin{equation}\label{Berwald}
\mathsf{B}(x,y):= \frac{\bigl(\sqrt{(1-|x|^2)|y|^2+\langle x,y\rangle^2}+\langle x,y\rangle\bigr)^2}
{(1-|x|^2)^2\sqrt{(1-|x|^2)|y|^2+\langle x,y\rangle^2}},
\qquad (x,y)\in T\mathbb{B}^n_1(\mathbf{0}).
\end{equation}
This metric has constant flag curvature zero, and all its geodesics are straight, yet exhibits extreme asymmetry: for the radial direction $\mathsf{y}:=x/|x|$,
\[
\lim_{|x|\to 1^-}\mathsf{B}(x,\mathsf{y})=+\infty,\qquad
\lim_{|x|\to 1^-}\mathsf{B}(x,-\mathsf{y})=\frac14.
\]
Consequently it admits no isometric embedding into any Minkowski space, and the associated Sobolev spaces are nonlinear; classical inequalities such as the Hardy inequality and the HPW principle fail completely on this space \cite{LZ}. In fact, $\mathsf{B}(x,y)$ is generated by $\psi=\phi=|\cdot|$  (the Euclidean norm) through \eqref{Pgene}--\eqref{Fgene}; see \cite{Li,Sh1}.

Compared with its Euclidean counterpart, the Finsler generalization of the CKN inequality acquires an additional free parameter $s$ governing the weight of the potential term.
Li and Zhao \cite{LZ} showed that on Berwald's metric space this generalized inequality exhibits a \emph{sharp threshold}: it holds for $s>2$ but fails completely when $s\le 2$.
To our knowledge, this is the first instance of such a threshold phenomenon even in the Finsler framework.
Its occurrence is notable because, on Finsler manifolds with extreme asymmetry (i.e., infinite reversibility), the relevant functional inequalities generally cannot be expected to hold globally; see Huang--Krist\'aly--Zhao \cite{HKZ}, Kaj\'ant\'o \cite{Ka}, and Mester--Peter--Varga \cite{MPV2020}. For instance, on Funk metrics---which are projectively flat and forward complete but not backward complete---the generalized CKN inequality fails unconditionally \cite[Theorem 1.3]{LZ}. This shows that such global inequalities are far from automatic on Finsler manifolds with extreme asymmetry, making the threshold phenomenon on Berwald's metric space all the more striking.

Since Berwald's metric is only one member of the class of non-trivial flat Finsler manifolds identified in \cite{LiZhao_Hilbert4}, it is natural to ask whether the threshold phenomenon is a structural feature of the entire class or merely an artifact of this particular model. Moreover, even in the Berwald case, the sharp constants for the generalized CKN and HPW inequalities were not determined in \cite{LZ}. This motivates the following two questions:

\medskip

{\it (Q1) Is the threshold phenomenon characteristic of the entire class of non-trivial flat Finsler manifolds, or is it an isolated feature of the Berwald model?}

\medskip
{\it (Q2) What are the sharp constants, and do they exhibit the same structural uniformity across the entire class?}
\medskip

The present paper answers both questions. For the entire class of forward complete but not backward complete projectively flat Finsler manifolds with zero flag curvature, we prove that the generalized CKN and HPW inequalities fail whenever $s\le 2$ (Theorem~\ref{thm:CKNHPW_threshold}). Thus the breakdown below the threshold is a structural property of the whole class, not an artifact of the Berwald model. For the sharp constants, we determine them explicitly under the additional assumption that the generating Minkowski norm $\psi$ is reversible, i.e., $\lambda_\psi=1$ (Theorem~\ref{thm:CKNHPW_sharp}). In particular, the sharp constants are $\frac{s-2}{a}$ for the generalized CKN inequality and $\frac{s-2}{p}$ for the generalized HPW uncertainty principle.
This extends the Berwald-model results to a broader geometric framework and provides the first determination of the sharp constants for these inequalities on non-trivial flat Finsler manifolds with reversible generating Minkowski norm.

To state our results, let $F$ be a Finsler metric on a domain $\Omega\subset\mathbb{R}^n$ containing the origin $\mathbf{0}$, where $n\ge2$, and let
\begin{equation} \label{r}
r=r(x):=d_F(\mathbf{0},x)
\end{equation}
denote the distance function from the origin. We endow $\Omega$ with the Lebesgue measure $\mathscr{L}^n$, which is natural in this projectively flat setting since $\Omega$ is a domain in $\mathbb{R}^n$ and the geometric data are expressed in Euclidean coordinates.
In the Finsler setting, the natural generalization of the CKN inequality replaces $|\nabla u|$ and $|x|$ by $F(\nabla u)$ and $r(x)$ defined in \eqref{r}, respectively.
For $a\ge p>1$ and $0\le b<1$, the generalized CKN/HPW inequality takes the form
\[
\left(\int_\Omega r^{bps} F^p(\nabla f)\,{\dd}x \right)^{1/p}
\left(\int_\Omega |f|^{p'(a-1)} r^{(1-b)p's}\,{\dd}x \right)^{1/p'}
\ge C \int_\Omega |f|^a r^{s-1}\,{\dd}x,
\qquad f\in C_0^\infty(\Omega),
\]
where $p'=p/(p-1)$ is the H\"older conjugate of $p$. The parameter $s\in(1-n,\infty)$ plays the role of the threshold variable, with the critical value $s=2$ separating the valid regime $s>2$ from the failing regime $s\le2$.

Our first main result is the weighted Hardy-type inequality that underlies all subsequent estimates.
\begin{theorem}\label{main_thmgeneral1}
Let $(\Omega, F, \mathscr{L}^n)$ be a forward complete projectively flat Finsler manifold with zero flag curvature, endowed with the Lebesgue measure. Let $a>1$ and $s>1-n$. Then for any $f\in C_0^\infty(\Omega)$,
\begin{equation}\label{firsteq}
\int_\Omega |f|^{a-1} r^s F(\nabla f)\,{\dd}x
\ge \frac{s-2}{a\lambda_\psi} \int_\Omega |f|^a r^{s-1}\,{\dd}x
+ \frac{n+1}{a\lambda_\psi} \int_\Omega |f|^a (1-\phi(x)) r^{s-1}\,{\dd}x,
\end{equation}
where $\lambda_\psi$ is the reversibility of the generating Minkowski norm $\psi(y):=F(\mathbf{0},y)$, and $\phi$ is the projective factor at the origin. In particular, for $\phi(x)\equiv 0$,
\begin{equation}\label{firsteqphi0}
\int_\Omega |f|^{a-1} r^s F(\nabla f)\,{\dd}x
\ge \frac{n+s-1}{a\lambda_\psi} \int_\Omega |f|^a r^{s-1}\,{\dd}x.
\end{equation}
\end{theorem}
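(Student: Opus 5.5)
The plan is to apply the Euclidean divergence theorem on $\Omega$ to a radial vector field of the form $|f|^a r^{s}W$, where $W$ is chosen so that its divergence produces exactly the right-hand side of \eqref{firsteq}. The first step is to make $r$ explicit using the structure \eqref{Pgene}--\eqref{Fgene}. Geodesics issuing from $\mathbf{0}$ are straight rays. Integrating $F$ along $t\mapsto tx$, and using $P(tx,x)$ determined by \eqref{Pgene} and homogeneity, I expect a Funk-type formula
\[
r(x)=\frac{\psi(x)}{1-\phi(x)} .
\]
For Berwald's metric this is $|x|/(1-|x|)$, which is consistent; if $\phi\equiv0$ it gives $r=\psi$. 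Euler's relation for the $1$-homogeneous $\psi,\phi$ then gives
\[
x\cdot\nabla r=\frac{r}{1-\phi}.
\]

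Next I choose the field $V:=r^{s-1}(1-\phi)\,x=r^s W$ with $W=(1-\phi)x/r$. A direct computation with $\operatorname{div}x=n$, $x\cdot\nabla\phi=\phi$ and the identity above gives
\[
\operatorname{div}V=r^{s-1}\bigl[(s-1)-\phi+n(1-\phi)\bigr]=r^{s-1}\bigl[(s-2)+(n+1)(1-\phi)\bigr].
\]
This is precisely the combination appearing in \eqref{firsteq}, and for $\phi\equiv0$ it reduces to $n+s-1$. Since $f\in C_0^\infty(\Omega)$, the divergence theorem yields
\[
\int_\Omega |f|^a\operatorname{div}V\,dx=-\int_\Omega a|f|^{a-2}f\,\langle df,V\rangle\,dx .
\]
The singularity at $\mathbf{0}$ is handled by cutting out a small ball $B_\varepsilon$. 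The boundary term is $O(\varepsilon^{s-1+n})$ because $r\sim\psi(x)$ near the origin, and it vanishes as $\varepsilon\to0$ precisely because $s>1-n$. The same condition gives local integrability of $r^{s-1}$. Because $|f|^a$ is only $C^1$ when $a>1$, this step is legitimate.

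To bound the right-hand side, I use the duality $\langle df,-V\rangle\le F^*(df)\,F(x,-V)=F(\nabla f)\,F(x,-V)$. This gives
\[
\int_\Omega |f|^a\operatorname{div}V\,dx\le a\int_\Omega |f|^{a-1}r^{s}F(\nabla f)\,F\!\left(x,-\tfrac{(1-\phi)x}{r}\right)dx .
\]
Everything therefore reduces to the pointwise estimate
\[
F(x,-x)\le \lambda_\psi\,\frac{r(x)}{1-\phi(x)}=\lambda_\psi\,\frac{\psi(x)}{(1-\phi(x))^2}.
\]
This estimate is the step I expect to be the main obstacle. For the forward direction one should find $F(x,x)=\psi(x)/(1-\phi(x))^2$ exactly, by differentiating the formula for $r$ along the ray. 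For $y=-x$, I would solve \eqref{Pgene} explicitly. The ansatz $P(x,-x)=c$ gives $c=\phi(-x+cx)=(1-c)\phi(x)$ when $c<1$, hence $P(x,-x)=-\phi(-x)\cdot$(correction). More precisely, $c=\phi(x)(1-c)$ if $1-c>0$, so $c=\phi/(1+\phi)$ with $\phi=\phi(x)$. Then $y+xP=-x/(1+\phi)$, and $x^kP_{y^k}$ comes from differentiating \eqref{Pgene} implicitly. Plugging these into \eqref{Fgene}, I expect $F(x,-x)=\psi(-x)\cdot(\text{factor})$ with the factor at most $1/(1-\phi)^2$. Then $\psi(-x)\le\lambda_\psi\psi(x)$ closes the argument.

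If the explicit computation turns out to be awkward, a fallback is the triangle inequality for $d_F$ along the segment. Using $r(x-tx)\ge r(x)-d_F(x-tx,x)$ and differentiating at $t=0$ yields $F(x,-x)\ge -x\cdot\nabla r$. That inequality has the wrong direction, however, so the explicit formula is the primary route. Once the estimate holds, \eqref{firsteq} follows by dividing by $a\lambda_\psi$, and \eqref{firsteqphi0} is the case $\phi\equiv0$, where $\Omega=\mathbb{R}^n$ and $F=\psi$.
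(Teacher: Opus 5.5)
Your strategy is the paper's: $V=r^{s-1}(1-\phi)x$ is exactly $r^s\nabla r$ for the Finsler gradient $\nabla r=(1-\phi)^2x/\psi=(1-\phi)x/r$ of Lemma~\ref{K=0nabla}. Your divergence identity is the paper's $r^s\Delta r+sr^{s-1}=r^{s-1}[(s-2)+(n+1)(1-\phi)]$. Working only with the Euclidean identity $x\cdot\nabla r=r/(1-\phi)$ and cutting out a small ball at $\mathbf{0}$ is fine, and slightly more careful than the paper.

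The genuine problem is the duality step. In $\int_\Omega|f|^a\,\mathrm{div}V\,dx=-a\int_\Omega|f|^{a-2}f\langle df,V\rangle\,dx$, the integrand equals $a|f|^{a-1}\langle df,-V\rangle$ only on $\{f>0\}$. On $\{f<0\}$ it equals $a|f|^{a-1}\langle df,V\rangle$, which must be controlled by $F^*(df)F(x,V)$. Since $F^*$ is irreversible, you cannot reduce to $f\ge0$. In fact your displayed inequality involving only $F\bigl(x,-(1-\phi)x/r\bigr)$ is false. Take the radial functions $f\le0$, $f'\ge0$ of Corollary~\ref{maincorollary}: the left side is $a\int_\Omega|f|^{a-1}f'r^s\,dx$, while the right side carries the extra factor $F\bigl(x,-(1-\phi)x/r\bigr)=(1-\phi(x))^2\psi(-x)/[\psi(x)(1+\phi(-x))^2]$. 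In the non-trivial case with $\lambda_\psi=1$ this factor is $(1-\phi(x))^2/(1+\phi(-x))^2<1$. You need the two-sided bound \eqref{absolute_dualff*}, so the correct reduction is to $\max\{F(x,x),F(x,-x)\}\le\lambda_\psi\psi(x)/(1-\phi(x))^2$, which is Lemma~\ref{maxF}.

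Second, the estimate you single out as the main obstacle is left as an expectation, and the computation you start is wrong. One has $\phi(-x+cx)=\phi(-(1-c)x)=(1-c)\phi(-x)$, not $(1-c)\phi(x)$, and $\phi$ (a weak Funk metric at $\mathbf{0}$) is in general not reversible. The correct computation is short:
\begin{itemize}
\item The number $c=\phi(-x)/(1+\phi(-x))\in[0,1)$ solves \eqref{Pgene} at $y=-x$, so it equals $P(x,-x)$ by the uniqueness in Theorem~\ref{thmK=0globalintro}. (The paper instead excludes $P(x,-x)\ge1$ directly, using $\phi(x)<1$.)
\item Euler's relation for $P(x,\cdot)$ gives $x^kP_{y^k}(x,-x)=-P(x,-x)$, so no implicit differentiation is needed.
\item Then \eqref{Fgene} yields $F(x,-x)=\psi(-x)/(1+\phi(-x))^2\le\lambda_\psi\psi(x)\le\lambda_\psi\psi(x)/(1-\phi(x))^2$.
\item At $y=x$ the same steps give $P(x,x)=\phi(x)/(1-\phi(x))$ and $F(x,x)=\psi(x)/(1-\phi(x))^2=r/(1-\phi)$. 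This supplies the forward bound your reduction omitted.
\end{itemize}
With these two repairs your argument is exactly the paper's proof. Note also that the formula $r=\psi/(1-\phi)$ you obtain by integrating along the ray is \eqref{coreqd0xdx0}. Identifying the ray length with $d_F(\mathbf{0},x)$ uses the fact that straight segments from $\mathbf{0}$ are minimizing.
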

We emphasize that the assumption that the manifold is not backward complete is not needed in Theorem~\ref{main_thmgeneral1}; it also covers the Minkowski case \(\phi\equiv0\).
The two terms on the right-hand side of \eqref{firsteq} explain how these non-trivial flat Finsler manifolds differ from Euclidean and Minkowskian manifolds. The second term involves the factor $\phi$, which satisfies $0<\phi(x)<1$ for every $x\neq\mathbf{0}$, whereas on Euclidean and Minkowskian manifolds $\phi\equiv0$, so the inequality reduces to \eqref{firsteqphi0}.

The presence of $\lambda_\psi$ in the denominator reflects the role of asymmetry: the constant is inversely proportional to the reversibility of the generating norm. Combining Theorem~\ref{main_thmgeneral1} with H\"older's inequality immediately yields the generalized CKN inequality (when $a>p$) and the generalized HPW uncertainty principle (when $a=p$) in the regime $s>2$, and the same threshold phenomenon governs their failure. Our next result establishes this complete characterization for the general (non-reversible) case.

\begin{theorem}\label{thm:CKNHPW_threshold}
Let $(\Omega, F, \mathscr{L}^n)$ be a forward complete but not backward complete projectively flat Finsler manifold with zero flag curvature, endowed with the Lebesgue measure. Let $ 1< p <n$, $a>1$, and $0\le b<1$. Then the following hold:
\begin{enumerate}[\rm (i)]
\item  \label{1.2(i)}
If $s>2$,
then the following inequality holds for all $f\in C_0^\infty(\Omega)$:
\[
\left(\int_\Omega r^{bps} F^p(\nabla f)\,{\dd}x \right)^{1/p}
\left(\int_\Omega |f|^{p'(a-1)}\, r^{(1-b)p's}\,{\dd}x \right)^{1/p'}
\ge \frac{s-2}{a\lambda_\psi} \int_\Omega |f|^a\, r^{s-1}\,{\dd}x.
\]

\item \label{1.2(ii)}
 If $1-n<s\le 2$,  and $0\le b\le \frac{p+1}{2p}$ such that $bps+n>0$ and $(1-b)p's+n>0$, then the inequality fails completely:
\[
\inf_{f\in C_0^\infty(\Omega)\setminus\{0\}}
\frac{
\left(\int_\Omega r^{bps} F^p(\nabla f)\,{\dd}x\right)^{1/p}
\left(\int_\Omega |f|^{p'(a-1)}\, r^{(1-b)p's}\,{\dd}x\right)^{1/p'}
}{
\int_\Omega |f|^a\, r^{s-1}\,{\dd}x
} = 0.
\]
\end{enumerate}
\end{theorem}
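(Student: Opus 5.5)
Part (i) follows directly from Theorem~\ref{main_thmgeneral1} and H\"older's inequality. Since the manifold is not backward complete, $0<\phi<1$ on $\Omega\setminus\{\mathbf{0}\}$, so the second term on the right of \eqref{firsteq} is nonnegative and can be dropped. This gives
\[
\int_\Omega |f|^{a-1}r^sF(\nabla f)\,\dd x\ge \frac{s-2}{a\lambda_\psi}\int_\Omega|f|^ar^{s-1}\,\dd x .
\]
We then split $r^s=r^{bs}\,r^{(1-b)s}$ on the left and apply H\"older with exponents $p,p'$. This produces exactly the product $\bigl(\int r^{bps}F^p(\nabla f)\bigr)^{1/p}\bigl(\int |f|^{p'(a-1)}r^{(1-b)p's}\bigr)^{1/p'}$. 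The only point to check is that $f\in C_0^\infty(\Omega)$ makes all integrals finite near $\mathbf{0}$. This holds since $r\sim\psi(x)$ there and $s>2$.

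For part (ii) we construct a family of test functions whose mass concentrates near the boundary $\partial\Omega=\{\phi=1\}$, in the direction where the metric is "backward short". The mechanism is the following. Because $F$ has infinite reversibility, $F(x,\nabla f)$ can be made small compared with $|\nabla f|$ when the gradient points so that $f$ increases toward the boundary along the degenerate direction; compare the Berwald limit $\mathsf B(x,-\mathsf y)\to 1/4$. Meanwhile $r(x)\to\infty$ as $\phi(x)\to1$ by forward completeness.

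The first step is to obtain explicit asymptotics from \eqref{Pgene}--\eqref{Fgene} along rays $x=t\theta$ with $\phi(\theta)=1$ and $t\to1^-$:
\begin{itemize}
\item $P(t\theta,\theta)$ blows up like $(1-t)^{-1}$, so $r(t\theta)\approx c\log\frac1{1-t}$, or possibly a power; this must be computed.
\item $F(x,-\theta)$ stays bounded.
\item The dual norm $F^*(x,\xi)$, which is what $F(\nabla f)$ means, behaves accordingly for covectors $\xi$ pointing outward.
\end{itemize}

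With these asymptotics in hand, we take radial-type test functions $f_\varepsilon(x)=g_\varepsilon(\phi(x))$, where $g_\varepsilon$ is increasing and supported in $[1-2\varepsilon,1-\varepsilon]$ or in a slowly expanding window. We then compute the three integrals in polar coordinates adapted to $\phi$, using $\dd x=\rho^{n-1}\dd\rho\,\dd\sigma$ on $\{\phi=\rho\}$.

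The numerator terms $r^{bps}F^p(\nabla f)$ and $|f|^{p'(a-1)}r^{(1-b)p's}$ together scale like $r^{s}$ times lengths. The denominator carries $r^{s-1}$, but it also lacks the gradient penalty. The hypotheses $s\le2$ and $b\le\frac{p+1}{2p}$ are designed so that, after optimizing the profile, the ratio tends to $0$ as $\varepsilon\to0$. The conditions $bps+n>0$ and $(1-b)p's+n>0$ guarantee local integrability near the origin, so we may also use functions that are nonzero near $\mathbf{0}$ if convenient.

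The main obstacle is this asymptotic bookkeeping: identifying precisely how $F^*(x,d\phi)$ and $r$ behave near $\partial\Omega$ for a general $\phi,\psi$ (not only Berwald), and checking that the exponent balance yields a vanishing ratio exactly when $s\le2$. We would first redo the Berwald computation of \cite{LZ} in a form using only homogeneity and the implicit equation \eqref{Pgene}. In that computation we expect $1+x^kP_{y^k}\sim (1-\phi)$-type factors to produce the $s-2$ shift. We then use the bounds $\psi\le\lambda_\psi\psi(-\cdot)$ and the comparability of $\psi,\phi$ with the Euclidean norm to transfer the estimates uniformly in the angular variable.
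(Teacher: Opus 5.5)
Your part (i) is the paper's argument: drop the nonnegative term $(n+1)\int_\Omega|f|^a(1-\phi)r^{s-1}\,\mathrm{d}x$ in \eqref{firsteq}, then apply H\"older with exponents $p,p'$ to $r^{bs}F(\nabla f)\cdot|f|^{a-1}r^{(1-b)s}$.

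Part (ii) has a genuine gap. The test functions you propose cannot make the quotient vanish, and the mechanism you invoke works against you.

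An increasing $g_\varepsilon$ supported in $[1-2\varepsilon,1-\varepsilon]$ is identically zero. So any profile living in the shell $\{1-2\varepsilon\le\phi\le1-\varepsilon\}$ must somewhere decrease toward $\partial\Omega$. On that shell $r\asymp\varepsilon^{-1}$, since $r=\psi/(1-\phi)$ exactly by \eqref{coreqd0xdx0}; no logarithm appears. On the decreasing part, irreversibility is a penalty. From $\langle\mathrm{d}r,x\rangle=\psi(x)/(1-\phi(x))^2$ and $F(x,-x)\le\lambda_\psi\psi(x)$ (Lemma~\ref{maxF}),
\[
F^*(x,-\mathrm{d}r)\ \ge\ \frac{\langle-\mathrm{d}r,-x\rangle}{F(x,-x)}\ \ge\ \frac{1}{\lambda_\psi(1-\phi(x))^2}\ \ge\ \frac{r^2}{\lambda_\psi\bar\varrho^{\,2}},\qquad\text{whereas } F^*(x,\mathrm{d}r)=1 .
\]
For functions of $\phi$ the analogue is $F^*(x,-\mathrm{d}\phi)\ge 1/(\lambda_\psi\bar\varrho)$. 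The Berwald limit $\mathsf B(x,-\mathsf y)\to1/4$ you cite is exactly what makes inward covectors expensive.

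Now combine this with $\hat\sigma_{\mathbf 0}(r,y)\asymp r^{-2}$ for $r\ge1$ (Lemma~\ref{lem_hat_sigma}). A fixed profile placed at $r\sim R\sim\varepsilon^{-1}$ gives
\[
\int r^{bps}F^{*p}(\mathrm{d}f)\gtrsim R^{bps+p-1},\qquad \int|f|^{p'(a-1)}r^{(1-b)p's}\asymp R^{(1-b)p's-1},\qquad \int|f|^a r^{s-1}\asymp R^{s-2}.
\]
So the quotient is $\gtrsim R^{2}\to\infty$. Even the increasing piece alone yields a quotient of order $R^0$. The parameters $s$ and $b$ cancel completely at a single scale. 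Hence the ``exponent balance exactly when $s\le2$'' you expect near $\partial\Omega$ does not exist, and the deferred asymptotic bookkeeping cannot rescue the plan.

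What is missing is the actual source of the threshold. The volume density decays like $\hat\sigma_{\mathbf 0}(r,y)\asymp r^{n-1}(r+\varrho)^{-(n+1)}$, forced by $\mathbf S(x,\nabla r)\asymp(n+1)/r$. This turns the weight $r^{s-1}$ into $r^{s-3}$ at large $r$, while the bulk near $\mathbf 0$ keeps density $r^{n-1}$.

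The paper uses $w_\iota=-e^{-\iota r^{1+\mu/p}}$ with $\mu=3/2$ and $\iota\to0^+$. These functions are non-positive and non-decreasing in $r$, so $F^*(\mathrm{d}f)=f'(r)$ and $\lambda_F$ never enters. They are essentially $-1$ on the forward ball $B_T(\mathbf 0)$, $T=\iota^{-p/(p+\mu)}$, and rise to $0$ beyond it.

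The estimates then go as follows.
\begin{itemize}
\item The denominator is $\gtrsim\int_{1/2}^{T}r^{s-3}\,\mathrm{d}r$. This is bounded below for $s<2$ and $\asymp\log T$ for $s=2$. This is where $s>1-n$, $bps+n>0$ and $(1-b)p's+n>0$ matter.
\item The numerator product is $O(T^{s-2})$ when $(1-b)p's>1$.
\item When $(1-b)p's\le1$ and $bps<p+1$, the numerator product decays like a negative power of $T$, up to a logarithm.
\item The hypothesis $b\le\frac{p+1}{2p}$ is exactly what gives $bps\le p+1$. The borderline $bps=p+1$ forces $s=2$, $b=\frac{p+1}{2p}$ and $(1-b)p's=1$, where the ratio is $\lesssim(\log T)^{1/p'}/\log T\to0$.
\end{itemize}
One then passes to $C_0^\infty$ via Lipschitz truncations and Lemma~\ref{lemma:Cpab_well}.

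So your aside that one ``may also use functions that are nonzero near $\mathbf 0$'' is not an option. It is the heart of the construction: the denominator must be fed by the bulk, not by a neighbourhood of $\partial\Omega$.
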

The conditions $bps+n>0$ and $(1-b)p's+n>0$ are automatically satisfied when $s>0$; for $s\le0$ they impose restrictions on $b$ and $s$.

Theorem~\ref{thm:CKNHPW_threshold} establishes the threshold $s=2$ but does not determine the optimal constant in the valid regime. To obtain the sharp constant, one may assume that the generating Minkowski norm $\psi$ is reversible, i.e., $\lambda_\psi=1$. This is precisely the condition under which equality can be achieved. Our next result determines the sharp constant in this setting.

\begin{theorem}\label{thm:CKNHPW_sharp}
Under the assumptions of Theorem~\ref{thm:CKNHPW_threshold}, suppose in addition that $\psi$ is reversible ($\lambda_\psi=1$), $a\ge p$, and $s>2$. If
\begin{align}\label{condition_bthm1.4}
b< \frac{p+s-1}{p s} - \frac{(a-p)(p+s-3)}{aps},
\end{align}
then for every $f\in C_0^\infty(\Omega)$,
\begin{align} \label{CKNsharp}
\left(\int_\Omega r^{bps} F^p(\nabla f)\,{\dd}x\right)^{1/p}
\left(\int_\Omega |f|^{p'(a-1)}\, r^{(1-b)p's}\,{\dd}x\right)^{1/p'}
\ge \frac{s-2}{a} \int_\Omega |f|^a\, r^{s-1}\,{\dd}x.
\end{align}
The constant $(s-2)/a$ is optimal. Moreover:
\begin{enumerate}[\rm (i)]
 \item \label{1.3(i)} For $a>p$, this is the generalized CKN inequality. Equality is approached in the limit by the family of compactly supported smooth radial functions constructed in Section~\ref{sec:sharp}.
 \item \label{1.3(ii)} For $a=p$, this reduces to the generalized HPW uncertainty principle with sharp constant $(s-2)/p$. Equality is approached in the limit by the family of compactly supported smooth radial functions constructed in Section~\ref{SharpHPW}.
\end{enumerate}
\end{theorem}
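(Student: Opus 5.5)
The plan has two parts: the inequality \eqref{CKNsharp}, which is cheap, and the optimality of $(s-2)/a$, which carries almost all the work.

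\textbf{The inequality.} Since $\lambda_\psi=1$, Theorem~\ref{main_thmgeneral1} gives
\[
\int_\Omega |f|^{a-1} r^s F(\nabla f)\,{\dd}x \ge \frac{s-2}{a}\int_\Omega |f|^a r^{s-1}\,{\dd}x + \frac{n+1}{a}\int_\Omega |f|^a(1-\phi)r^{s-1}\,{\dd}x .
\]
The last term is nonnegative because $0\le\phi<1$ on $\Omega$. Split $r^s=r^{bs}\cdot r^{(1-b)s}$ and apply H\"older's inequality with exponents $p,p'$ to the left side. This yields \eqref{CKNsharp}, exactly as in Theorem~\ref{thm:CKNHPW_threshold}\eqref{1.2(i)}.

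\textbf{Optimality: where the loss must vanish.} Two things must become asymptotically negligible: the discarded term $\frac{n+1}{a}\int|f|^a(1-\phi)r^{s-1}$, and the H\"older gap. Because $\Omega$ is forward complete but not backward complete, $r(x)\to\infty$ as $\phi(x)\to1^-$. Near $\partial\Omega$ one should therefore expect $1-\phi\approx e^{-cr}$, or at least $1-\phi\to0$ as $r\to\infty$; this is what a computation with \eqref{Pgene}--\eqref{Fgene} along rays $x=t\xi$ gives. So I would concentrate the test functions where $r$ is large, i.e.\ in a collar $\{1-\varepsilon<\phi<1\}$, which makes the $(1-\phi)$ term $o(1)$ relative to $\int|f|^ar^{s-1}$.

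\textbf{Optimality: the ray computation and test functions.} I would use radial functions $f=u(r)$. Along each ray the distance $r$ and the Lebesgue density can be computed explicitly from the classification: in polar form ${\dd}x=\rho^{n-1}{\dd}\rho\,{\dd}\sigma$, with $\rho$ tied to $r$ by $r=\int_0^\rho \psi(\xi)(1-\phi(t\xi))^{-2}\,dt$-type formulas. The gradient equals $F(\nabla f)=|u'(r)|$ when $u'<0$, since the reverse norm is controlled via $\lambda_\psi=1$ and $F^*(\dd r)=1$; I would take $u$ decreasing so that the correct direction appears. Everything then reduces to one-dimensional weighted integrals in $r$ with weight $w(r)\approx e^{-(n+1)r}$-type behavior. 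Equality in H\"older requires $r^{bs}|u'|\propto |u|^{a-1}r^{(1-b)s}$ pointwise, i.e.\
\[
u'=-c\,u^{a-1}r^{(1-2b)s}.
\]
For $a>p$ this gives $u=(C+r^{\gamma})^{-1/(a-2)}$-type profiles; for $a=p$ it gives a Gaussian-type profile $e^{-cr^{\gamma}}$. I would shift these profiles to $r\in[R,2R]$, say $u_R(r)=U(r-R)$, or rescale them, cut them off smoothly, and let $R\to\infty$. Condition \eqref{condition_bthm1.4} is what I expect to guarantee that the cutoff errors and all the integrals are finite and comparable.

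\textbf{Main obstacle.} I expect the hardest step to be the asymptotic bookkeeping. I need to show, uniformly, that three losses are $o(\int|u|^ar^{s-1}w)$: the ratio loss in the integration by parts (the $\frac{n+1}{a}(1-\phi)$ term), the H\"older gap, and the cutoff gaps. This in turn needs precise asymptotics of the density $w(r)$ and of $1-\phi$ in terms of $r$. The first thing I would try is to establish $\frac{d}{dr}\log w(r)=-(n+1)+o(1)$ and $1-\phi=o(1)$ as $r\to\infty$, uniformly in direction. With those in hand, the constant reduces to $(s-2)/a$, the contribution coming from $r^{s-1}$ together with the growth rate, after a one-variable integration by parts.
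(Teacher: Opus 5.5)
The inequality \eqref{CKNsharp} itself, obtained from Theorem~\ref{main_thmgeneral1} with $\lambda_\psi=1$ by discarding the $(1-\phi)$ term and applying H\"older, is exactly the paper's argument. Your overall sharpness strategy is also the paper's: exact Hardy equality for monotone radial functions, H\"older extremals, mass pushed to $r\to\infty$ where $1-\phi\to0$, then cut-offs. As written, however, the sharpness part fails, and the first problem is the sign of $u'$.

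You take $u$ decreasing and claim $F(\nabla f)=|u'(r)|$ because $\lambda_\psi=1$. For $u'<0$, however, $F^*({\dd}f)=|u'|\,F^*(-{\dd}r)$. Since ${\dd}r(x)=F(x,x)$, Lemma~\ref{maxF} and \eqref{phi_rho_minmax} give
\[
F^*(x,-{\dd}r)\ \ge\ \frac{\langle -{\dd}r,-x\rangle}{F(x,-x)}=\frac{F(x,x)}{F(x,-x)}=\frac{(1+\phi(-x))^2}{(1-\phi(x))^2}\ \ge\ \frac{(\bar\varrho+r)^2}{\bar\varrho^{\,2}}.
\]
The hypothesis $\lambda_\psi=1$ only concerns $F(\mathbf 0,\cdot)$; the metric itself has $\lambda_F=+\infty$. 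A decreasing profile whose mass sits at $r\sim R$ therefore inflates $\int r^{bps}F^{*p}({\dd}f)\,{\dd}x$ by a factor $\gtrsim R^{2p}$. The quotient then tends to $+\infty$, not to $(s-2)/a$.

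Equality in the step $|\langle{\dd}f,\nabla r\rangle|\le F^*({\dd}f)$ of Theorem~\ref{main_thmgeneral1} forces ${\dd}f$ to be a nonnegative multiple of ${\dd}r$. For compact support this means $f=u(r)\le0$ with $u'\ge0$. That is why the paper's $u_\kappa$, $v_\kappa$ and their cut-offs are negative and increasing (Corollary~\ref{maincorollary}). For such functions the Hardy identity is exact, so only the H\"older gap and the $(1-\phi)$ term remain to be controlled.

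Second, the asymptotics you plan to prove are false, and your bookkeeping rests on them. Your weaker claim $1-\phi\to0$ is correct, but in fact $1-\phi\asymp 1/r$, not $e^{-cr}$; this follows from $r=\psi/(1-\phi)$ and \eqref{phi_rho_minmax}. Because $\mathbf S(x,\nabla r)=(n+1)\phi/r$, the density is polynomial, not exponential: $\hat\sigma_{\mathbf 0}(r,y)\asymp r^{n-1}(r+\varrho)^{-(n+1)}\asymp r^{-2}$ (Lemma~\ref{lem_hat_sigma}). More precisely, $r\,\partial_r\log\hat\sigma_{\mathbf 0}=-2+(n+1)(1-\phi)$, consistent with \eqref{newK=0_Laplace}. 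It is this $-2$ that combines with $s$ to give $s-2$. With a weight $e^{-(n+1)r}$, your one-variable integration by parts would not produce $(s-2)/a$. Condition \eqref{condition_bthm1.4} is exactly what makes the tails $\int^\infty|u|^a r^{s-4}\,{\dd}r$ and the cut-off contributions on $[e^R,e^R+1]$ negligible against the $r^{-2}$ weight.

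The H\"older equality condition is also misstated. It reads $r^{bps}|u'|^p\propto|u|^{(a-1)p'}r^{(1-b)p's}$, i.e.\ $|u'|\propto|u|^{\frac{a-1}{p-1}}r^{\frac{s(1-bp)}{p-1}}$. Your first-power relation is only the case $p=2$. The correct condition gives $u_\kappa=-\eta\bigl(\kappa+r^{\frac{p+s-1-bps}{p-1}}\bigr)^{\frac{1-p}{a-p}}$ and $v_\kappa=-e^{-r^{\alpha+1}/(\kappa(\alpha+1))}$ with $\alpha=\frac{s(1-bp)}{p-1}$.

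Since this condition involves $r$ explicitly, use the scaling parameter rather than a shift $U(r-R)$: $\kappa=R^{\frac{p+s-1-bps}{p-1}}$ for $a>p$, and $\kappa=R^{\alpha+1}$ for $a=p$. With that choice
\[
\frac{\int|u|^a(1-\phi)r^{s-1}\,{\dd}x}{\int|u|^ar^{s-1}\,{\dd}x}=O\bigl((1+\log R+R^{s-3})/R^{s-2}\bigr)\to0,
\]
and this is where $s>2$ is used.
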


\begin{remark}
We emphasize that the condition \(\lambda_\psi=1\) concerns the generating Minkowski norm \(\psi(y)=F(\mathbf{0},y)\), not the Finsler metric \(F\) itself. In the non-trivial flat case, \(F\) is typically irreversible, and its reversibility constant \(\lambda_F\) is infinite. Thus the sharp constant is obtained in a genuinely non-Riemannian and highly asymmetric setting.
\end{remark}

As a direct consequence, we recover and sharpen the previously known results on Berwald's metric space defined in \eqref{Berwald}.
In this case, the generating norm is Euclidean ($\psi(y)=|y|$), hence reversible ($\lambda_\psi=1$), and the projective factor is $\phi(x)=|x|$. Theorem~\ref{thm:CKNHPW_sharp} therefore yields the following.

\begin{corollary}\label{cor:Berwald}
On Berwald's metric space $(\mathbb{B}^n_1(\mathbf{0}), \mathsf{B}, \mathscr{L}^n)$, the following hold:

\begin{enumerate}
\item[(i)] The generalized CKN inequality holds with sharp constant $(s-2)/a$ for $s>2$ and $b$ satisfying \eqref{condition_bthm1.4}, and fails completely for $1- n< s\le 2$ and $b$ satisfying the conditions in Theorem \ref{thm:CKNHPW_threshold}\eqref{1.2(ii)}.

\item[(ii)] The generalized HPW uncertainty principle holds with sharp constant $(s-2)/p$ for $s>2$ and $b$ satisfying \eqref{condition_bthm1.4}, and fails completely for $1- n< s\le 2$ and $b$ satisfying the conditions in Theorem \ref{thm:CKNHPW_threshold}\eqref{1.2(ii)}.
\end{enumerate}
\end{corollary}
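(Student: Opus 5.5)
The corollary is a specialization of Theorems~\ref{thm:CKNHPW_threshold} and~\ref{thm:CKNHPW_sharp}, so the plan is to check that Berwald's metric space satisfies their hypotheses and then read off the conclusions.

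\textbf{Step 1: Berwald's space lies in the class.}
Set $\psi=\phi=|\cdot|$ in \eqref{Pgene}--\eqref{Fgene}.

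Equation \eqref{Pgene} becomes $P=|y+xP|$. Squaring gives the quadratic
\[
(1-|x|^2)P^2-2\langle x,y\rangle P-|y|^2=0 .
\]
Its positive root is
\[
P=\frac{\sqrt{(1-|x|^2)|y|^2+\langle x,y\rangle^2}+\langle x,y\rangle}{1-|x|^2}.
\]

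To get $F$ from \eqref{Fgene}, use Euler's relation $y^kP_{y^k}=P$ together with the implicit equation for $P$. This should yield $F=P^2/\sqrt{(1-|x|^2)|y|^2+\langle x,y\rangle^2}$, which is exactly \eqref{Berwald}. The paper already attributes this identification to \cite{Li,Sh1}, so I would cite it rather than redo the differentiation.

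By the classification of \cite{LiZhao_Hilbert4}, $\mathsf{B}$ is projectively flat with zero flag curvature. The domain is $\Omega=\{\phi<1\}=\mathbb{B}^n_1(\mathbf{0})$.

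\textbf{Step 2: forward complete but not backward complete.}
For forward completeness, take the outward radial direction: $\mathsf{B}(x,\mathsf{y})=(1-|x|)^{-2}$ along $\mathsf{y}=x/|x|$. Its integral over $|x|\in[0,1)$ diverges, so the boundary is at infinite forward distance.

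For the failure of backward completeness, take the inward direction: $\mathsf{B}(x,-\mathsf{y})=(1+|x|)^{-2}\le 1$. So a radial segment traversed inward from the boundary has finite length. This gives a backward Cauchy sequence with no limit in $\Omega$.

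Together with the limits already displayed after \eqref{Berwald}, all hypotheses of Theorem~\ref{thm:CKNHPW_threshold} hold.

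\textbf{Step 3: apply the theorems.}
Since $\psi=|\cdot|$ is reversible, $\lambda_\psi=1$.

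\emph{Case $s>2$, $a\ge p$, $b$ satisfying \eqref{condition_bthm1.4}.} Theorem~\ref{thm:CKNHPW_sharp} gives the inequality with sharp constant $(s-2)/a$.
\begin{itemize}
\item For $a>p$ this is the CKN statement in (i).
\item For $a=p$ the constant is $(s-2)/p$, which is the HPW statement in (ii).
\end{itemize}

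\emph{Case $1-n<s\le 2$ with the stated conditions on $b$.} Theorem~\ref{thm:CKNHPW_threshold}\eqref{1.2(ii)} gives total failure for every $a>1$. This covers both the CKN case $a>p$ and the HPW case $a=p$.

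\textbf{Main obstacle.}
The only nontrivial verification is Step 2 together with the identification in Step 1. Since both are recorded earlier in the paper and in the cited literature, the argument is essentially bookkeeping of hypotheses.
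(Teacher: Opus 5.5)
Your proposal is correct and follows the same route as the paper. You identify $\psi=\phi=|\cdot|$, so that $\lambda_\psi=1$. You then read off the sharp constants for $s>2$ from Theorem~\ref{thm:CKNHPW_sharp} (with $a>p$ for CKN and $a=p$ for HPW), and total failure for $1-n<s\le 2$ from Theorem~\ref{thm:CKNHPW_threshold}\eqref{1.2(ii)}.

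One minor point: Step~2 is unnecessary, and as written the radial integral alone does not prove forward completeness. That would require controlling every curve, for example via $d_F(\mathbf{0},x)=|x|/(1-|x|)$ from \eqref{coreqd0xdx0}. In any case, the non-Minkowski alternative of the classification Theorem~\ref{thmK=0globalintro}, which you already invoke, gives forward completeness and backward incompleteness directly.
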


This corollary resolves the open questions left in \cite{LZ} regarding the optimal constants on the Berwald's metric model, and shows that the threshold phenomenon observed there is a structural feature of the entire class of non-trivial flat Finsler manifolds with reversible generating norm.

The present results demonstrate that the non-trivial flat Finsler manifolds arising from Hilbert's fourth problem are not merely geometric curiosities. They provide a natural analytic setting in which classical functional inequalities exhibit a systematic, parameter-dependent threshold: the inequality holds for \(s>2\) and fails for \(s\le 2\) across the entire class. Moreover, for \(s>2\) the sharp constant \((s-2)/a\) is established in the case where the generating Minkowski norm \(\psi\) is reversible. We emphasize that this condition concerns \(\psi\), not the Finsler metric \(F\) itself, whose reversibility is infinite. These findings reveal that the threshold phenomenon is a structural feature of the whole class rather than an accident of a particular model.

The paper is organized as follows. Section~\ref{prelimain} collects the necessary preliminaries from Finsler geometry, including the classification theorem needed here, estimates for the distance function and $S$-curvature, and two newly proved essential lemmas. Section~\ref{sec:sharp}
proves  the weighted Hardy-type inequality (Theorem~\ref{main_thmgeneral1}) and the generalized CKN inequality with sharp constant  for $s > 2$ (Theorem~\ref{thm:CKNHPW_threshold}\eqref{1.2(i)}, Theorem~\ref{thm:CKNHPW_sharp}\eqref{1.3(i)}).
Section~\ref{SharpHPW} is devoted to  proving the corresponding generalized HPW inequality with sharp constant (Theorem~\ref{thm:CKNHPW_sharp}\eqref{1.3(ii)}).
Section~\ref{section:K=0} proves the failure for $s\le 2$ (Theorem~\ref{thm:CKNHPW_threshold}\eqref{1.2(ii)}).

\vskip 10mm

\section{Preliminaries}\label{prelimain}

\subsection{Finsler geometry basics}
We assume the reader is familiar with the basic notions of smooth manifolds, tangent and cotangent bundles; see, for example, Warner \cite{Warner} for details. In this section we recall only those definitions and properties from Finsler geometry that are needed in the sequel; for a comprehensive treatment we refer to Bao--Chern--Shen \cite{BCS}, Ohta \cite{Ohta1} and Shen \cite{ShenSpray,ShenLecture}. Einstein's summation convention is used throughout the paper.

Let $V$ be an $n$-dimensional vector space with $n \geq 2$. A {\it pseudo-norm} on $V$ is a nonnegative function $\psi: V \rightarrow [0, +\infty)$ satisfying:
\begin{enumerate}[{\rm (i)}]
\item\label{minkownor1} $\psi(y)\geq 0$ with equality if and only if $y=\mathbf{0}$;
\item\label{minkownor2} $\psi$ is positively $1$-homogeneous, i.e., $\psi(\alpha y) = \alpha \psi(y)$ for all $\alpha > 0$;
\item  \begin{equation} \label{phiquasiconvex0}
\psi(y_1+y_2)\leq \psi(y_1)+\psi(y_2), \quad \forall y_1,y_2\in V.
\end{equation}
\end{enumerate}
A pseudo-norm $\psi$ is  a {\it weak Minkowski norm} if it is smooth on $V\backslash\{\mathbf{0}\}$ and
the equality in \eqref{phiquasiconvex0} holds only for
\[
\alpha_1 y_1=\alpha_2y_2, \quad \alpha_1\geq 0, \quad \alpha_2\geq 0, \quad \alpha_1+\alpha_2>0.
\]
It is a {\it Minkowski norm} if, in addition, the Hessian matrix $\left( [\psi^2]_{y^i y^j}(y) \right)$ is positive definite for all $y \in V \backslash\{\mathbf{0}\}$.
The pair $(V, \psi)$ is then called a ({\it weak}) {\it Minkowski space}.

A {\it Finsler metric} on a smooth manifold $M$ is a function $F: TM   \to [0, +\infty)$ such that $F(x,\cdot)$ is a Minkowski norm on $T_xM$ for every $x \in M$. The pair $(M,F)$ is called a {\it Finsler manifold}.
In particular, when $M$ is taken to be a vector space $V$ and $F(x,y) = \psi(y)$ is independent of $x$, the above definition reduces to that of a Minkowski space $(V,\psi)$. This elementary case will serve as a recurring reference in the classification of flat projectively flat structures in Section \ref{subsectionK=0}.

The {\it fundamental form} $g_y$ on $T_xM$ is
\[ g_y(u,v) = g_{ij}(x,y) u^i v^j, \quad \forall u, v \in T_xM, \]
where $g_{ij}(x,y)$
is the {\it fundamental tensor} of $F$ defined by
\[ g_{ij}(x,y):=\tfrac12[F^2]_{y^iy^j}(x,y), \qquad \forall y \in T_xM \backslash \{\mathbf{0}\}.\]
 It satisfies $F^2=g_{ij}(x,y)y^iy^j$ by Euler's theorem for homogeneous functions.
The Riemannian case corresponds to the special situation where $g_{ij}$ is independent of $y$. In general, however, $g_{ij}$ depends on the direction $y$ and the point $x$.

The possible failure of symmetry is a distinctive feature of Finsler metrics. Its extent is measured by the {\it reversibility}
\begin{equation*}
\lambda_F(M):=\sup_{x\in M}\lambda_F(x) \ \ {\rm with}\ \ \lambda_F(x)= \sup_{y\in T_xM\setminus\{\mathbf{0}\}} \frac{F(x,-y)}{F(x,y)},
\end{equation*}
 see Rademacher \cite{Rade1,Rade}.
One has \(\lambda_F(M)=1\) if and only if $F $ is {\it reversible}, i.e., $F(x, y) = F(x, -y)$; this class includes all Riemannian metrics, but is strictly larger in the Finsler setting.

The {\it Legendre transformation} $\mathfrak{L} : TM \rightarrow T^*M$ is defined by \[
\mathfrak{L}(x,y) := g_{ij}(x,y)\, y^i dx^j \quad \text{for } y \neq 0,
\]
and extended to the zero section by $\mathfrak{L}(x,0) := 0$. By the strong convexity of $F$, $\mathfrak{L}$ restricts to a diffeomorphism from $TM\setminus\{0\}$ to $T^*M\setminus\{0\}$.
For a smooth function $f$, its {\it gradient}  is given by \[\nabla f := \mathfrak{L}^{-1}({\dd}f).\]  In particular, \(\nabla f = 0\) on the set \(\{df = 0\}\); consequently, ${\dd}f(X) = g_{\nabla f} (\nabla f,X)$ on the set $\{ {\dd}f \neq 0\}$.

The {\it co-metric} (or {\it dual metric}) $F^*$ on $T^*M$ is defined as the support function of the unit ball of $F$:
\begin{equation}\label{coFinsler}
F^*(x,\eta):=\underset{y\in T_xM\backslash \{\mathbf{0}\}}{\sup}\frac{\langle\eta,y\rangle}{F(x,y)}, \qquad
\forall \eta\in T_x^*M.
\end{equation}
It is itself a Finsler metric on the cotangent bundle \(T^*M\). From this definition one obtains the duality relation
\begin{equation*}
F^*(\mathfrak L(x,y))=F(x,y),\qquad \forall(x,y)\in TM,
\end{equation*}
which implies
\begin{equation}\label{granotwoff}
		F^*(x,{\dd}f)=F(x,\nabla f),  \quad (F^*({\dd}f)=F(\nabla f) \text{ for short}).
\end{equation}
In the sequel, when stating our results we shall use the gradient form
\(F(\nabla f)\), while in the proofs we shall often work with
\(F^*(\mathrm{d}f)\); the two are interchangeable by \eqref{granotwoff}.

The definition \eqref{coFinsler} also gives the elementary inequality
\begin{equation}\label{dualff*}
\langle \eta,y\rangle\le F^*(x,\eta)F(x,y),
\end{equation}
with equality if and only if \(\eta=\alpha\mathfrak L(x,y)\) for some \(\alpha>0\). Since \(F\) need not be symmetric, we shall also use the following inequality:
\begin{equation}\label{absolute_dualff*}
|\langle\eta,y\rangle| \leq F^*(x,\eta) \max \{ F(x,y), F(x,-y)\}, \qquad  \forall \eta\in T^*_xM,\ y\in T_xM,
\end{equation}
which will be essential in the proof of Theorem \ref{main_thmgeneral1}.

A geodesic is a smooth curve \(\gamma:I\to M\) whose coordinate functions \(\gamma^i(t)\) satisfy the second-order system
\[
\ddot{\gamma}^i + 2G^i(\gamma,\dot{\gamma})=0,
\]
where \(G^i(x,y)\) are the geodesic coefficients, given by
\[
G^i(x,y):=\frac14 g^{il}(x,y)\left(2\frac{\partial g_{jl}}{\partial x^k}(x,y)-\frac{\partial g_{jk}}{\partial x^l}(x,y)\right)y^jy^k.
\]
These coefficients determine the metric connection and encode the directional dependence of the geometry. Geodesics are locally minimizing and have constant speed (with respect to $F$).

The {\it flag curvature} $\mathbf K$ is the Finslerian analogue of sectional curvature. For a non-zero $y\in T_xM$ and a transverse vector $v\in T_xM$, it is defined by
\[
\mathbf K(x,y,v):=\frac{g_y(R_y(v),v)}{g_y(y,y)g_y(v,v)-g_y(y,v)^2},
\]
where \(R_y\) is the {\it Riemannian curvature}
 of $F$, which is a family of linear transformations on tangent spaces. Explicitly, it is given by
$R_y:=R^i_k(x,y)\frac{\partial}{\partial x^i}\otimes {\dd}x^k$, where
\[
R^i_{\,k}(x,y):=2\frac{\partial G^i}{\partial x^k}-y^j\frac{\partial^2G^i}{\partial x^j\partial y^k}+2G^j\frac{\partial^2 G^i}{\partial y^j \partial y^k}-\frac{\partial G^i}{\partial y^j}\frac{\partial G^j}{\partial y^k}.
\]
The {\it Ricci curvature} is then the trace
\[
\mathbf{Ric}(x,y):=\frac1{F^2(x,y)}\sum_{i=1}^{n-1}\mathbf K(x,y,e_i),
\]
with \(\{e_i\}\) a \(g_y\)-orthonormal basis of \(T_xM\) chosen so that \(e_n=y/F(x,y)\).

A Finsler manifold is {\it forward complete} if every geodesic $\gamma:[0,T)\to M$ with $T<+\infty$ admits an extension to $[0,+\infty)$; backward completeness is defined analogously.
Let $\zeta:[0,1]\rightarrow M$ be a Lipschitz continuous path.
The associated {\it distance function}
\[
d_F(x_1,x_2):=\inf_{\zeta}\int_0^1 F(\zeta(t),\dot\zeta(t))\,{\dd}t
\]
is generally asymmetric unless $F$ is reversible.

	The {\it forward} and {\it backward metric balls} centered at $x\in M$ with radii $R>0$ are defined respectively by
\begin{equation*}
	B^+_R(x):=\{z\in M:\, d_F(x,z)<R\},\qquad B^-_R(x):=\{z\in M:\, d_F(z,x)<R\}.
\end{equation*}
Throughout this paper, unless otherwise specified, \(B_R(x):=B_R^+(x)\) denotes the forward metric ball.

Let $\mathfrak m$ be a smooth positive measure on $M$, written in local coordinate system as
\[{\dd}\mathfrak{m}=\sigma(x){\dd}x^1\wedge\ldots\wedge {\dd}x^n,\]
where $\sigma= \sigma(x)$ denotes the {\it density function} of $\dm$.
 A triple $(M,F,\mathfrak m)$ will be called a {\it Finsler metric measure manifold} ($\FMMM$) if $(M,F)$ is forward complete and endowed with smooth positive measure $\m$.

For a \(\FMMM\) $(M,F,\mathfrak m)$, the {\it distortion} is defined as
\begin{equation*}
\tau(x,y):=\ln \frac{\sqrt{\det g_{ij}(x,y)}}{\sigma(x)}, \qquad  \forall y\in T_xM\backslash\{0\},
\end{equation*}
The associated {\it S-curvature} is the derivative of \(\tau\) along geodesics:
\begin{equation}\label{Scurvaturedef}
\mathbf S(x,y):=\left.\frac{{\dd}}{{\dd}t}\right|_{t=0}\tau(\gamma_y(t),\dot\gamma_y(t)).
\end{equation}
Equivalently, by Shen \cite[Lemma 3.2.3]{ShenLecture},
$
\mathbf S(x,y)=\frac{\partial G^i}{\partial y^i}(x,y)-y^i\frac{\partial}{\partial x^i}\ln \sigma(x).
$
We say that \(\mathbf S\ge h\) if
\[ \mathbf S(x,y)\geq h\,F(x,y), \quad \forall (x,y)\in TM\backslash\{0\}. \]

We now introduce polar coordinates. Fix \(o\in M\) and let \(S_oM:=\{z\in T_oM:F(o,z)=1\}\). For \(y\in S_oM\), define
\[
i_y:=\sup\{t>0:\gamma_y|_{[0,t]} \text{ is globally minimizing}\},\qquad \mathfrak i_o:=\inf_{y\in S_oM}i_y>0,
\]
where \(\gamma_y\) is the unit-speed geodesic starting at \(o\) with \(\dot\gamma_y(0)=y\). A point \(x\in M\) is represented in polar coordinates \((r,y)\) by \(r=r(x)=d_F(o,x)\) and \(y=y(x)\in S_oM\), with
 \begin{equation}\label{geommeaingofr}
r\leq i_y,\quad\gamma_y(r)=(r,y),\quad \dot{\gamma}_y(r)=\nabla r|_{(r,y)}.
\end{equation}
By \cite[Lemma 3.2.3]{ShenLecture}, this representation satisfies
\[
F(\nabla r)=F^*({\dd}r)=1 \quad \text{for }  \mathfrak{m} \text{-almost every }  x\in M.
\]
In these coordinates, the measure decomposes as
\begin{equation}\label{volumeexprsso}
\dm|_{(r,y)}=:\hat{\sigma}_o(r,y)\,{\dd}r \wedge {\dd}\nu_o(y),
\end{equation}
where $\hat{\sigma}_o(r,y)$ is the density function and  \({\dd}\nu_o\) is the volume form on the indicatrix \(S_oM\). Hence, for any \(f\in L^1(M,\mathfrak m)\),
\[
\int_M f\,{\dd}\mathfrak m=\int_{S_oM}{\dd}\nu_o(y)\int_0^{i_y}f(r,y)\,\hat\sigma_o(r,y)\,{\dd}r.
\]
We shall use the {\it integral of distortion} (cf.~Huang--Krist\'aly--Zhao \cite{HKZ})
\begin{equation}\label{integralofdistortion}
\mathscr{I}_{\m}(o):=\int_{S_oM}e^{-\tau(o,y)}\,{\dd}\nu_o(y),
\end{equation}
which is finite. In our proof of the results in this paper, we mainly use $\mathscr{I}_{\m}(\mathbf{0})$, $\tau(\mathbf{0},y)$, and $\nu_{\mathbf{0}}(y)$ since the distance function $r$ in the polar coordinates here is from the origin $\mathbf{0}$.

The volume comparison theorem of Zhao--Shen \cite[Theorem 3.6]{ZS} will be needed. Set
\[
\mathfrak s_k(t):=
\begin{cases}
\frac{\sin(\sqrt{k}t)}{\sqrt{k}}, & k>0,\\[3pt]
t, & k=0,\\[3pt]
\frac{\sinh(\sqrt{-k}t)}{\sqrt{-k}}, & k<0.
\end{cases}
\]
\begin{theorem}[\cite{ZS}]\label{bascivolurcompar} Let $(M,F,\m)$ be an $n$-dimensional $\FMMM$. If $\mathbf{Ric}\geq (n-1)k$ for some $k \in \mathbb{R}$, then for every $o\in M$ and $y\in  S_oM$, the function
\begin{equation} \label{densesim}
H_y(r):=\frac{\hat{\sigma}_o(r,y)}{ e^{-\tau\big(\gamma_y(r),\dot{\gamma}_y (r)\big)}  \mathfrak{s}_k^{n-1}(r)},\qquad \forall  r\in (0,i_y),
\end{equation}
is monotonically decreasing in $r$ and satisfies $\lim_{r \to 0^+} H_y(r) = 1$,
where $(r,y)$ is the polar coordinate system around $o$ and $\gamma_y(t)$ is a geodesic with $\dot{\gamma}_y(0)=y$.
\end{theorem}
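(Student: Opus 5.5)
The plan is to separate the density $\hat\sigma_o(r,y)$ into a distortion factor and a purely Riemannian Jacobian. The Jacobian is then controlled by the classical Bishop--Gromov argument along each geodesic $\gamma_y$, for $r<i_y$.

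\textbf{Step 1: factor the density.} Fix $y\in S_oM$ and a $g_y$-orthonormal basis $e_1,\dots,e_{n-1},e_n=y$ of $T_oM$. Let $J_i$, $i\le n-1$, be the Jacobi fields along $\gamma_y$ with $J_i(0)=0$ and $J_i'(0)=e_i$. These are the variation fields of $t\mapsto\exp_o(t(y+\epsilon e_i))$. By the Gauss lemma they stay $g_{\dot\gamma_y}$-orthogonal to $\dot\gamma_y=\nabla r$.

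The differential of the polar map $(r,y)\mapsto\gamma_y(r)$ sends $\partial_r$ to $\dot\gamma_y$ and the tangent space of $S_oM$ to the span of the $J_i(r)$. Hence the Riemannian volume of the osculating metric $g_{\dot\gamma_y(r)}$ has density
\[
\hat h(r,y):=\sqrt{\det\bigl(g_{\dot\gamma_y(r)}(J_i(r),J_j(r))\bigr)}
\]
with respect to $dr\wedge d\nu_o$. Here $d\nu_o$ is normalized as the volume induced on $S_oM$ by $g_y$.

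Since $\dm=\sigma\,dx=e^{-\tau}\sqrt{\det g_{ij}(x,\nabla r)}\,dx$, we get
\[
\hat\sigma_o(r,y)=e^{-\tau(\gamma_y(r),\dot\gamma_y(r))}\,\hat h(r,y).
\]
Consequently $H_y(r)=\hat h(r,y)/\mathfrak s_k^{n-1}(r)$, and both assertions reduce to statements about $\hat h$.

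\textbf{Step 2: the limit at $0$.} Expand $J_i(r)=re_i+O(r^2)$ and note $g_{\dot\gamma_y(r)}\to g_y$. Then $\hat h(r,y)=r^{n-1}(1+o(1))$, and $\mathfrak s_k(r)=r(1+o(1))$, so $H_y(r)\to1$ as $r\to0^+$.

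\textbf{Step 3: monotonicity via a Riccati inequality.} Work with the Chern (or Berwald) covariant derivative along $\gamma_y$, for which $g_{\dot\gamma}$ is parallel. Let $J$ be the matrix of the $J_i$ in a parallel $g_{\dot\gamma}$-orthonormal frame of $\dot\gamma^\perp$, and set $A:=J'J^{-1}$. This is well defined for $0<r<i_y$, since there are no conjugate points before the cut value. Then:
\begin{itemize}
\item $A$ is $g_{\dot\gamma}$-symmetric, because its initial data give a Lagrangian family;
\item $(\ln\hat h)'=\operatorname{tr}A$;
\item $A'+A^2+R_{\dot\gamma}=0$, where $R_{\dot\gamma}$ is the Riemann curvature of $F$ restricted to $\dot\gamma^\perp$.
\end{itemize}
Taking traces and using $\operatorname{tr}R_{\dot\gamma}=\mathbf{Ric}(\dot\gamma)\ge(n-1)k$ together with Cauchy--Schwarz, $\operatorname{tr}(A^2)\ge(\operatorname{tr}A)^2/(n-1)$, gives for $m:=\operatorname{tr}A$
\[
m'+\frac{m^2}{n-1}+(n-1)k\le0,\qquad m(r)=\frac{n-1}{r}+O(1).
\]

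\textbf{Step 4: comparison.} The model quantity $\bar m:=(n-1)\mathfrak s_k'/\mathfrak s_k$ satisfies the same relation with equality and the same leading singularity at $r=0$. A standard ODE comparison, applied to $(n-1)^{-1}\mathfrak s_k^2(m-\bar m)$, whose derivative is $\le0$ and which tends to $0$ at $r=0$, yields $m\le\bar m$. Therefore
\[
\bigl(\ln H_y\bigr)'=m-\bar m\le0,
\]
so $H_y$ is decreasing.

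The main obstacle is Step 1 together with the symmetry claim in Step 3. One must check that the Finslerian Jacobi equation along $\gamma_y$ coincides with the Riemannian Jacobi equation of the osculating metric $g_{\nabla r}$ in the directions relevant to $\operatorname{tr}A$. This holds because $\nabla r$ is a geodesic field, so $\gamma_y$ is a $g_{\nabla r}$-geodesic and the flag curvatures with flagpole $\dot\gamma$ agree with those of $g_{\nabla r}$. One must also check that the trace of the Riemann curvature is exactly $\mathbf{Ric}$ as defined. I would first establish this identification via the geodesic field $\nabla r$; the remaining steps are then the Riemannian argument verbatim.
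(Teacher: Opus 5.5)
Your argument is correct, and it is the standard one. The paper gives no proof of its own and simply quotes the result from \cite[Theorem 3.6]{ZS}. Your route is the usual argument behind that result: factor out $e^{-\tau(\gamma_y(r),\dot\gamma_y(r))}$ to reduce $\hat\sigma_o$ to the $g_{\dot\gamma}$-Jacobian $\sqrt{\det g_{\dot\gamma_y(r)}(J_i(r),J_j(r))}$, then run the Bishop/Riccati comparison along $\gamma_y$ using the Chern connection with reference vector $\dot\gamma$. Equivalently, one can phrase the second step as a Laplacian comparison for the osculating metric $g_{\nabla r}$.

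One point is left implicit. For $k>0$, the quotient $H_y$ needs $\mathfrak s_k>0$ on $(0,i_y)$, i.e.\ $i_y\le\pi/\sqrt{k}$. Your weighted quantity $\mathfrak s_k^2m-(n-1)\mathfrak s_k\mathfrak s_k'$ gives this for free. It is nonincreasing on all of $(0,i_y)$, whatever the sign of $\mathfrak s_k$, and tends to $0$ as $r\to0^+$. So if it vanished at $r=\pi/\sqrt{k}<i_y$, it would be identically zero on $(0,\pi/\sqrt{k})$, forcing $m=\bar m$ there. Then $\det J=\mathfrak s_k^{n-1}\to0$ as $r\to\pi/\sqrt{k}$, which is a conjugate point before $i_y$, a contradiction.
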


\begin{remark}\label{rem:volume_comparison_equality}
In the equality case of the Zhao--Shen volume comparison theorem \cite[Theorem 3.6]{ZS}, which occurs when $\mathbf K=0$, the density function satisfies the exact identity
\begin{equation}\label{simga_0}
\hat{\sigma}_{\mathbf{0}}(r,y)= e^{-\tau(\gamma_y(r),\dot{\gamma}_y(r))} r^{n-1}.
\end{equation}
This will be used in Lemma \ref{lem_hat_sigma} to derive the two-sided estimate \eqref{hat_sigma}.
\end{remark}

Since \(S_oM\) is compact and $\lim_{r \to 0^+} H_y(r) = 1$, under the assumption of the above theorem, there exists \(\epsilon\in(0,\min\{1,\mathfrak i_o\})\) such that
\begin{equation}\label{limcaes}
2^{-1} e^{-\tau(o,y)}r^{n-1}\leq \hat{\sigma}_o(r,y)\leq 2 e^{-\tau(o,y)}r^{n-1}, \quad \forall r\in (0,\epsilon), \ y\in S_oM.
\end{equation}

Let \(p\in[1,+\infty)\). For \(u\in C_0^\infty(M)\), define the
Sobolev asymmetric norm by
\begin{equation*}
	\|u\|_{W_{\mathfrak m}^{1,p}}
	:=
	\left(
	\int_M |u|^p\,{\dd}\mathfrak m
	\right)^{1/p}
	+
	\left(
	\int_M F^p(x,\nabla u)\,{\dd}\mathfrak m
	\right)^{1/p}.
\end{equation*}
The Sobolev space \(W_0^{1,p}(M,F,\mathfrak m)\) is defined as the
closure of \(C_0^\infty(M)\) with respect to the backward topology, i.e.,
\[
u\in W^{1,p}_0(M,F,\m) \quad \Longleftrightarrow \quad \exists \, (u_k) \subset C^\infty_0(M) \text{ \ with }\lim_{k\rightarrow \infty}\|u-u_k\|_{W^{1,p}_{\m}}= 0.
\]
Since $F^*(x,{\dd}f)=F(x,\nabla f),$
the Sobolev space  admits the norm
\begin{align}\label{Sobolev_pseudonorm}
\|u\|_{W^{1,p}_{\mathfrak m}}
& = \left(
		\int_M |u|^p\,{\dd}\mathfrak m
		\right)^{1/p}
		+
		\left(
		\int_M F^{*p}({\dd}u)\,{\dd}\mathfrak m
		\right)^{1/p}.
\end{align}
Although this norm behaves linearly when \(F\) is Riemannian, in the Finsler setting the gradient map is nonlinear; it is therefore often preferable to work with the differential. The norm \(\|\cdot\|_{W^{1,p}_{\mathfrak m}}\) satisfies the usual axioms:
\[
\|u\|_{W^{1,p}_{\mathfrak m}}\ge0,\quad \|\lambda u\|_{W^{1,p}_{\mathfrak m}}=\lambda\|u\|_{W^{1,p}_{\mathfrak m}}\ (\lambda\ge0),\quad
\|u+v\|_{W^{1,p}_{\mathfrak m}}\le \|u\|_{W^{1,p}_{\mathfrak m}}+\|v\|_{W^{1,p}_{\mathfrak m}}, \quad  \forall u,v\in W_0^{1,p}.
\]

A function \(f:U\to\mathbb R\) is called {\it Lipschitz} if there exists \(C\geq 0\) such that
\[
|f(x_1)-f(x_2)|\leq  C\,d_F(x_1,x_2),\quad \forall x_1,x_2\in U.
\]
The smallest such constant is the {\it Lipschitz constant} $\dil(f)$. We denote by \(\Lip_0(U)\) the space of compactly supported Lipschitz functions on \(U\). 

The following standard approximation result (see \cite[Proposition 3.3]{KLZ}) will be used in proving Lemma~\ref{lemma:Cpab_well}.
\begin{lemma}\cite{KLZ}\label{lipsconverppax}
Let $(M,F,\m)$ be an $\FMMM$. For every $u\in \Lip_0(M)$,   there exists a sequence of smooth Lipschitz functions $(u_k)\subset C^\infty_0(M)\cap \Lip_0(M)$  such that
\begin{align*}
&\supp u\cup \supp u_k\subset \mathscr{K},\quad |u_k|\leq C,\quad F^*({\dd} u_k)\leq   \dil(u_k)\leq C,\\
&\quad u_k \rightrightarrows u, \qquad \|u_k-u \|_{W^{1,p}_{\m}}+\|u-u_k \|_{W^{1,p}_{\m}}\rightarrow 0,
\end{align*}
where $\mathscr{K}\subset M$ is a compact set,  $C>0$ is  a constant and ``$\rightrightarrows$" means uniform convergence. In particular, if $u$ is nonnegative, so are the \(u_k\).
\end{lemma}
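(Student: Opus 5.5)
The plan is to reduce to Euclidean mollification in finitely many coordinate charts. On a compact set, the Finsler quantities $F$, $F^*$, $d_F$ are all two-sided comparable to their Euclidean counterparts, and this comparability carries every estimate in the lemma.

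\emph{Step 1: reduction to charts.} Fix $u\in\Lip_0(M)$ and a relatively compact open set $U\supset\supp u$. Cover $\overline U$ by finitely many coordinate charts $(V_j,\varphi_j)$, $j=1,\dots,N$, each with compact closure. Take a smooth partition of unity $\{\chi_j\}$ subordinate to this cover, so that $u=\sum_j\chi_ju$. Let $\mathscr K$ be a slightly larger compact set. By compactness of the unit sphere bundle over $\mathscr K$ and continuity of $F$, there is $C_0\ge1$ with
\[
C_0^{-1}|y|\le F(x,y)\le C_0|y| \quad\text{and}\quad C_0^{-1}|\eta|\le F^*(x,\eta)\le C_0|\eta|
\]
in each chart, where $|\cdot|$ is the Euclidean norm. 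In particular $\lambda_F$ is bounded on $\mathscr K$, and $d_F$ is locally bi-Lipschitz equivalent to the Euclidean distance there. Hence each $\chi_ju$, read in its chart, is Euclidean Lipschitz with compact support. By Rademacher's theorem its differential exists a.e. and is bounded.

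\emph{Step 2: mollification.} With a standard nonnegative mollifier $\rho_\varepsilon$, set
\[
u_k:=\sum_j\bigl((\chi_ju)\circ\varphi_j^{-1}*\rho_{\varepsilon_k}\bigr)\circ\varphi_j,\qquad \varepsilon_k\to0 .
\]
For $\varepsilon_k$ small the supports stay inside $\mathscr K$, and $u_k\ge0$ whenever $u\ge0$. We also get $|u_k|\le\sup|u|$ and $u_k\rightrightarrows u$ by uniform continuity. Moreover ${\dd}u_k\to{\dd}u$ in Euclidean $L^p$ and $|{\dd}u_k|$ is uniformly bounded.

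To pass to the Finsler norms, use the comparability of Step 1:
\[
F^*({\dd}(u_k-u))\le C_0|{\dd}u_k-{\dd}u| \quad\text{and}\quad F^*({\dd}(u-u_k))\le C_0|{\dd}u_k-{\dd}u| .
\]
Since the density $\sigma$ is bounded on $\mathscr K$, both asymmetric norms $\|u_k-u\|_{W^{1,p}_{\m}}$ and $\|u-u_k\|_{W^{1,p}_{\m}}$ tend to $0$.

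\emph{Step 3: Lipschitz bounds.} The main obstacle here is the asymmetry of $d_F$. The bound $F^*({\dd}u_k)\le\dil(u_k)$ is pointwise and easy: for $y\in T_xM$, let $\gamma$ be a short curve with $\gamma(0)=x$ and $\dot\gamma(0)=y$. Then
\[
{\dd}u_k(y)=\lim_{t\to0^+}\frac{u_k(\gamma(t))-u_k(x)}{t}\le\dil(u_k)\,\lim_{t\to0^+}\frac{d_F(x,\gamma(t))}{t}=\dil(u_k)F(x,y),
\]
and taking the supremum over $y$ gives the claim.

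For the uniform bound $\dil(u_k)\le C$, integrate along near-minimizing curves. If $\zeta$ joins $x_1$ to $x_2$, then
\[
u_k(x_2)-u_k(x_1)=\int_0^1{\dd}u_k(\dot\zeta)\le\sup F^*({\dd}u_k)\cdot L_F(\zeta) .
\]
For the reverse difference $u_k(x_1)-u_k(x_2)$, apply the same estimate to $-u_k$, which uses $\sup F^*(-{\dd}u_k)\le C_0^2\sup F^*({\dd}u_k)$ on $\mathscr K$. The delicate point is that a near-minimizing curve may leave $\mathscr K$, where $\lambda_F$ could be large. This is handled because $u_k$ is constant ($=0$) off $\mathscr K$, so only the portions of $\zeta$ inside $\mathscr K$ contribute to the integral. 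This yields $\dil(u_k)\le C$ with $C$ depending only on $C_0$, $\sup|{\dd}u|$, and the partition of unity.
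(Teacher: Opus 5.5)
The paper gives no proof of this lemma. It quotes the statement from \cite[Proposition 3.3]{KLZ} and uses it only as a black box in Lemma~\ref{lemma:Cpab_well}, so your argument is best read as a correct, self-contained alternative to that citation.

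\emph{What your route makes explicit.} The only Finsler input is that, on a compact set, $F$ and $F^*$ are comparable in charts to the Euclidean norms. This gives boundedness of $\lambda_F$ there and, along short segments, $d_F\le C_0|x_1-x_2|$.
\begin{itemize}
\item From the bound on $d_F$ you get that $u$ is Euclidean-Lipschitz in charts.
\item Partition-of-unity mollification then yields uniform convergence, preservation of sign, and convergence of ${\dd}u_k-{\dd}u$ in $L^p$.
\item Comparability turns this into convergence of both asymmetric norms $\|u_k-u\|_{W^{1,p}_{\m}}$ and $\|u-u_k\|_{W^{1,p}_{\m}}$.
\end{itemize}

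\emph{Step 3.} Your handling of the asymmetry is the right one. Since ${\dd}u_k$ vanishes off $\mathscr K$, integrating along any Lipschitz curve $\zeta$ from $x_1$ to $x_2$ gives $|u_k(x_2)-u_k(x_1)|\le \sup_{\mathscr K}\max\{F^*({\dd}u_k),F^*(-{\dd}u_k)\}\,L_F(\zeta)$. Hence the possibly unbounded reversibility of $F$ away from $\mathscr K$ never enters. Your pointwise derivation of $F^*({\dd}u_k)\le \dil(u_k)$ from the definition of $\dil$ is also fine. Forward completeness is never needed, which is consistent with the lemma being purely a statement on a compact set.

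\emph{One small inaccuracy.} The bound $|u_k|\le\sup|u|$ is not literally true for chart-wise mollification of the pieces $\chi_j u$. The mollified cut-offs $\chi_j$, each taken in a different chart, sum only to $1+O(\varepsilon_k)$ near $\supp u$. The lemma, however, only asks for $|u_k|\le C$, and that follows from the uniform convergence $u_k\rightrightarrows u$.

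\emph{Trade-off.} Citing \cite{KLZ} buys brevity. Your version costs about a page, but it shows why the lemma holds in this asymmetric setting: everything reduces to compactness and local Euclidean comparability.
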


\subsection{Background of projectively flat Finsler manifolds with $\mathbf{K} =0$}\label{subsectionK=0}
In this subsection we collect the classification results for projectively flat Finsler manifolds with $\mathbf K=0$ from \cite{LiZhao_Hilbert4}, together with two auxiliary lemmas needed in the proofs of our main theorems.

A Finsler metric is called {\it projectively flat} if, locally, its geodesics can be represented as straight lines in some coordinate system. In this case, the spray coefficients take the special form
\[
G^i(x,y)=P(x,y)y^i,
\]
where the scalar function \(P(x,y)\), homogeneous of degree one in \(y\), is the {\it projective factor}.

Let \(\Omega\subset\mathbb R^n\) be a simply connected domain containing the origin, and let \((\Omega,F)\) be a Finsler manifold.
 Define
\begin{equation}\label{psiphi}
\psi(y):=F(\mathbf{0},y),\qquad \phi(y):=P(\mathbf{0},y),
\end{equation}
where \(P = F_{x^i} y^i /(2 F) \). The following classification result of Li and Zhao characterizes all forward complete projectively flat Finsler manifolds with vanishing flag curvature.

\begin{theorem}\cite[Theorem 1.2]{LiZhao_Hilbert4} \label{thmK=0globalintro}
	Let $(\Omega,F)$ be a Finsler manifold and set $\psi(y), \phi(y)$ as in \eqref{psiphi}.
Then $(\Omega,F)$ is forward complete, projectively flat, and has $\mathbf K=0$ if and only if the following hold:
	\begin{enumerate}[\rm (i)]
		\item\label{K=0basciciont}  $\phi$ is non-negative, $\Omega = \big\{ x \in \mathbb{R}^n \,:\, \phi(x) < 1 \big\}$ is either bounded or $\mathbb{R}^n$, the projective factor $P$ is the unique solution to
		\begin{equation}\label{P_phi}
			P(x,y)=\phi(y+xP(x,y)),\quad \forall (x,y)\in \Omega\times \mathbb{R}^n,
		\end{equation}
		and the Finsler metric $F$ is given by
		\begin{equation}\label{K=0region}
			F(x,y) = \psi ( y + x P(x,y)) \,\left\{ 1 +   x^k  P_{y^k}(x,y)\right\},    \quad \forall (x,y)\in \Omega\times \mathbb{R}^n;
		\end{equation}
		
		\item\label{K=0additoncomp} exactly one of the following occurs:
		\begin{enumerate}[{\rm (1)}]
			\item  \label{globalK=0(i)}
			$\phi=0$, in which case $(\Omega,F)=(\mathbb{R}^n,\psi)$ is a Minkowski space (backward complete with $\lambda_F(\Omega)<+\infty$);

			\item \label{globalK=0(ii)}
			$\phi$ is a weak Minkowski norm, in which case
			$(\Omega, F)$ is not backward complete with $\lambda_F(\Omega)=+\infty$ and particularly,  $\Omega$ is a bounded domain in $\mathbb{R}^n$ and $P$ is the unique weak Funk metric on $\Omega$.
		\end{enumerate}
	\end{enumerate}
\end{theorem}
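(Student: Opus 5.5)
The plan is to reduce everything to Berwald's PDE system for projectively flat metrics with vanishing flag curvature and then integrate it by characteristics. Recall that $F$ is projectively flat with $G^i=Py^i$ if and only if $F_{x^k}y^k=2PF$ and $F_{x^k}=(PF)_{y^k}$ (Hamel). In that case $\mathbf K=0$ is equivalent to the Riccati-type system
\[
P_{x^k}=PP_{y^k},\qquad F_{x^k}=(PF)_{y^k}.
\]

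For the local part of (i), I would first show that \eqref{P_phi} is the general solution of $P_{x^k}=PP_{y^k}$ with initial datum $P(\mathbf 0,\cdot)=\phi$.
\begin{itemize}
\item Set $z=y+xP(x,y)$ and differentiate $P=\phi(z)$ implicitly. This gives $P_{y^k}=\phi_{z^k}/(1-x^j\phi_{z^j})$ and $P_{x^k}=P\phi_{z^k}/(1-x^j\phi_{z^j})$, so the PDE holds.
\item Uniqueness follows because $z$ is constant along the characteristics of the linear transport operator $\partial_{x^k}-P\partial_{y^k}$, using the homogeneity of $P$.
\item For $F$, the second equation becomes a linear transport equation along the same characteristics. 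I would check directly that $\psi(z)\{1+x^kP_{y^k}\}$ solves it and reduces to $\psi$ at $x=\mathbf 0$. Since $1+x^kP_{y^k}=1/(1-x^j\phi_{z^j})$ is the Jacobian-type factor, this yields \eqref{K=0region}.
\end{itemize}

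Solvability of $P=\phi(y+xP)$ for every $y$ requires $1-x^j\phi_{z^j}>0$. Evaluating along the ray $y=x$ and using Euler's relation suggests the condition $\phi(x)<1$. This is how $\Omega=\{\phi<1\}$ should appear.

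Next comes completeness. The geodesics are straight lines $x+\sigma(t)y$, and the reparametrization obeys $\ddot\sigma+2P(x+\sigma y,y)\dot\sigma^2=0$. Forward completeness means that the $F$-length of every ray $t\mapsto x+ty$ inside $\Omega$ is infinite whenever the ray leaves every compact set. Computing this length with the explicit formula gives a quantity like $\int\psi(z)/(1-\phi(\cdot))\,dt$. It diverges exactly when the ray reaches $\partial\{\phi<1\}$ or, for $\phi\equiv0$, goes to infinity. This shows that $\Omega$ must be the full sublevel set $\{\phi<1\}$.

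I expect the main obstacle to be part (ii), and in particular the convexity and subadditivity of $\phi$. Nonnegativity of $\phi$ should come from forward completeness: if $\phi(y)<0$ for some $y$, the ray in direction $-y$, or backward integration of the Riccati ODE $\dot P=-P^2$ along geodesics, would blow up in finite $F$-length. For convexity, I would note that $P$ satisfies the Funk equation $P_{x^k}=PP_{y^k}$ on $\Omega$, and then argue as in Funk and Shen that the indicatrix $\{\phi=1\}$ equals $\partial\Omega$. Since $\Omega$ is the region where the geodesic extension works, $\Omega$ is star-shaped. Strong convexity of $F$ at points $x\ne\mathbf 0$ then forces $\{\phi\le1\}$ to be convex; otherwise $g_{ij}$ would degenerate at a point of non-convexity. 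This makes $\phi$ a weak Minkowski norm, and so $P$ is the weak Funk metric of $\Omega$.

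The dichotomy then follows. Either $\phi\equiv0$, so $P\equiv0$, $F=\psi$, and the Minkowski case holds. Or $\phi>0$ away from $\mathbf 0$, so $\Omega$ is bounded. In the latter case, following the backward direction $-x/|x|$ toward $\partial\Omega$ gives finite length, so the metric is not backward complete. Comparing $F(x,\pm x)$ as $\phi(x)\to1$ gives $\lambda_F=+\infty$. The converse implication is a direct verification that these formulas satisfy Berwald's system.
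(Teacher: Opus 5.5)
\paragraph{The gap: the dichotomy in (ii).} The decisive step of (ii) is asserted, not proved. From ``$\phi\ge0$ and $\{\phi<1\}$ convex'' you pass directly to ``either $\phi\equiv0$ or $\phi>0$ on $\mathbb{R}^n\setminus\{\mathbf{0}\}$'' (hence $\Omega$ bounded), and also to the strict triangle inequality. Convexity of the sublevel set only makes $\phi$ a nonnegative sublinear function. Such a function may vanish on a nontrivial closed cone, or be linear on a cone, and nothing in your outline excludes either possibility.

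The first possibility in fact cannot be excluded from the stated hypotheses. Take $n=2$, $\psi(y)=|y|$ and $\phi(y)=|y|+y^1$. This $\phi$ is smooth off the origin, nonnegative and strictly subadditive, but $\phi(-e_1)=0$. Then $\Omega=\{\phi<1\}=\{(x^2)^2<1-2x^1\}$ is a parabolic region.
\begin{itemize}
\item Since $\nabla\phi(z)=z/|z|+e_1$, we get $D:=1-x\cdot\nabla\phi(z)\ge1-\phi(x)>0$ on $\Omega$. So $P=\phi(y+xP)$ and $F=\psi(z)/D$ are smooth and positive on $T\Omega\setminus\{0\}$. By your own characteristic computation they satisfy Berwald's system, so $F$ is projectively flat with $\mathbf{K}=0$.
\item With $u=(\cos\theta,\sin\theta)$, the indicatrix of $F(x,\cdot)$ is the curve $y(\theta)=D(u)\bigl(u-\phi(u)x\bigr)$. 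A direct computation gives $\det(y',y'')=D\,(D^2+3D'^2)>0$, so $F$ is strongly convex at every $x\in\Omega$.
\item Along $t\mapsto x_0+tv$ one has $P=P_0/(1-tP_0)$ and $F=F_0/(1-tP_0)^2$. Hence every forward ray has $F$-length $F_0t/(1-tP_0)\to\infty$. This includes $v=-e_1$, where $P_0=0$ and the ray never leaves $\Omega$. So $(\Omega,F)$ is forward complete.
\end{itemize}
Yet $\Omega$ is neither bounded nor $\mathbb{R}^2$, and $\phi\not\equiv0$ is not a weak Minkowski norm. So the conclusion ``bounded or $\mathbb{R}^n$'' in (i), and the alternative in (ii), cannot be proved as formulated. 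They need an extra hypothesis such as $\phi>0$ on $\mathbb{R}^n\setminus\{\mathbf{0}\}$, which the Introduction does impose explicitly. Even under that hypothesis you still owe an argument that $\partial\Omega$ contains no segments (the strictness in the triangle inequality); convexity alone does not give it.

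\paragraph{Two smaller points.}
\begin{itemize}
\item Your mechanism for convexity (``$g_{ij}$ would degenerate at a point of non-convexity'') is unsubstantiated, and it is also unnecessary. By Hopf--Rinow, forward completeness gives a minimizing geodesic between any two points, and all geodesics are straight segments, so $\Omega$ is convex. You should prove this first, because your characteristic argument from the origin already uses $[\mathbf{0},x]\subset\Omega$.
\item In the nonnegativity step the direction is reversed. If $\phi(y)<0$, the Riccati equation $\dot P=-P^2$ blows up along the forward geodesic from $\mathbf{0}$ with initial direction $y$ (not $-y$, and not traced backward). This happens at $F$-length $\psi(y)/|\phi(y)|$, which is what contradicts forward completeness.
\end{itemize}
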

\begin{remark}
By this theorem, the metric $F$ is generated and determined by $\psi$ and $\phi$.
Except for Theorem~\ref{main_thmgeneral1}, in the rest of this paper we focus exclusively on the forward complete but not backward complete case, i.e. case \eqref{globalK=0(ii)} in Theorem \ref{thmK=0globalintro}.
\end{remark}

The following lemma is a consequence of Theorem \ref{thmK=0globalintro}; it will be used in the proof of Theorem  \ref{main_thmgeneral1} and in the sharpness argument.
\begin{lemma}\label{maxF}
	Let $(\Omega,F)$ be an $n$-dimensional forward complete projectively flat Finsler manifold  with $\mathbf{K}=0$.
	Define $\psi(y)$ and $\phi(y)$ as in \eqref{psiphi}. Then
\begin{align} \label{maxF(x,x,-x)}
	\max \{F(x,x), F(x,-x) \} = \max \left\{ \frac{\psi(x)}{[1-\phi(x)]^2}, \frac{\psi(-x)}{[1+\phi(-x)]^2} \right\} \leq  \frac{\lambda_\psi \psi(x)}{[1-\phi(x)]^2}, \end{align}
where $\lambda_{\psi}$ is the reversibility of $\psi$.
\end{lemma}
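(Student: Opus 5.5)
The plan is to evaluate $F(x,\pm x)$ in closed form, directly from the representation \eqref{P_phi}--\eqref{K=0region} of Theorem~\ref{thmK=0globalintro}. The key fact is that along the radial line through $x$ both the implicit equation for $P$ and the correction factor $1+x^kP_{y^k}$ can be solved explicitly, using positive homogeneity of $\phi$ together with Euler's theorem. The case $x=\mathbf 0$ is trivial, so assume $x\neq\mathbf 0$; recall $0\le\phi(x)<1$ on $\Omega$.

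\emph{Step 1: the projective factor at $y=\pm x$.} For $y=x$, try $P=c$ with $1+c>0$. Then \eqref{P_phi} reads $c=\phi((1+c)x)=(1+c)\phi(x)$, so $c=\phi(x)/(1-\phi(x))$, which is indeed $\ge0$. Since the solution of \eqref{P_phi} is unique, $P(x,x)=\phi(x)/(1-\phi(x))$. Consequently
\[
z_+:=x+xP(x,x)=\frac{x}{1-\phi(x)},\qquad \psi(z_+)=\frac{\psi(x)}{1-\phi(x)}.
\]
For $y=-x$, try $P=c$ with $1-c>0$. Then $c=\phi(-(1-c)x)=(1-c)\phi(-x)$, giving $P(x,-x)=\phi(-x)/(1+\phi(-x))$. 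Hence
\[
z_-:=-x+xP(x,-x)=-\frac{x}{1+\phi(-x)},\qquad \psi(z_-)=\frac{\psi(-x)}{1+\phi(-x)}.
\]

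\emph{Step 2: the factor $1+x^kP_{y^k}$.} Differentiate \eqref{P_phi} in $y^k$, writing $z=y+xP(x,y)$. This gives $P_{y^k}=\phi_{i}(z)\,(\delta^i_k+x^iP_{y^k})$. Contracting with $x^k$ yields
\[
\bigl(1-\phi_i(z)x^i\bigr)\bigl(1+x^kP_{y^k}\bigr)=1 .
\]
Since $\phi$ is smooth off the origin and $z_\pm\neq\mathbf 0$, this differentiation is legitimate. Because $\phi_i$ is $0$-homogeneous, Euler's theorem gives:
\begin{itemize}
\item at $z_+$ (a positive multiple of $x$): $\phi_i(z_+)x^i=\phi(x)$;
\item at $z_-$ (a positive multiple of $-x$): $\phi_i(z_-)x^i=-\phi(-x)$.
\end{itemize}
Both denominators $1-\phi(x)$ and $1+\phi(-x)$ are positive. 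Therefore $1+x^kP_{y^k}$ equals $1/(1-\phi(x))$ at $y=x$ and $1/(1+\phi(-x))$ at $y=-x$. Inserting these into \eqref{K=0region} gives the equality in \eqref{maxF(x,x,-x)}.

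\emph{Step 3: the upper bound.} Since $\phi\ge0$ and $\phi(x)<1$, we have $1+\phi(-x)\ge1\ge1-\phi(x)>0$. By the definition of reversibility, $\psi(-x)\le\lambda_\psi\psi(x)$, so the second term is at most $\lambda_\psi\psi(x)/[1-\phi(x)]^2$. The first term is bounded by the same quantity because $\lambda_\psi\ge1$.

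The only delicate point is Step 1. One must make sure the explicit candidate solves \eqref{P_phi} with the correct sign ($1\pm c>0$) so that homogeneity applies, and then invoke uniqueness. A secondary point is checking that the factor $1-\phi_i(z)x^i$ never vanishes, which the explicit values above confirm.
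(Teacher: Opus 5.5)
Your proof is correct and follows essentially the same route as the paper: compute $F(x,\pm x)$ in closed form from \eqref{P_phi}--\eqref{K=0region} using the positive homogeneity of $\phi$ and $\psi$, then bound the backward term by $\lambda_\psi\psi(x)/[1-\phi(x)]^2$ using $1+\phi(-x)\ge 1-\phi(x)>0$. The differences are only tactical. First, the paper does not verify a candidate and invoke uniqueness; it proves $0\le P(x,-x)<1$ by contradiction, so that homogeneity can be applied to $-x+xP(x,-x)=-(1-P(x,-x))x$. Second, it gets $1+x^kP_{y^k}(x,\pm x)=1\pm P(x,\pm x)$ at once from Euler's identity $y^kP_{y^k}=P$ for the $1$-homogeneous $P$, which is shorter than your implicit differentiation of \eqref{P_phi} but gives the same values.
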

\begin{proof}
	By Theorem~\ref{thmK=0globalintro}\eqref{K=0basciciont} and the non-negativity of
	\(\phi\), we have \(P(x,y)\geq0\) for every
	\((x,y)\in\Omega\times\mathbb{R}^n\). We first prove that
	\(P(x,-x)<1\).
	
	Indeed, if \(P(x,-x)=1\), then
	\[
	1=P(x,-x)
	=\phi\bigl(-x+xP(x,-x)\bigr)
	=\phi(\mathbf{0})=0,
	\]
	which is impossible. If \(P(x,-x)>1\), then the positive
	homogeneity of \(\phi\) gives
	\[
	P(x,-x)
	=\phi\bigl(-x+xP(x,-x)\bigr)
	=\bigl(P(x,-x)-1\bigr)\phi(x).
	\]
	Consequently,
	\[
	\phi(x)
	=\frac{P(x,-x)}{P(x,-x)-1}>1,
	\]
	which contradicts \(x\in\Omega\). Therefore,
	\[
	0\leq P(x,-x)<1.
	\]

	Using Theorem \ref{thmK=0globalintro}\eqref{K=0basciciont} again and the positive homogeneity of $\phi$
we have
\begin{align*}
	P(x,x) & = \phi(x + x P(x,x) ) = (1+P(x,x)) \phi(x), \\
	P(x,-x) & = \phi(-x + x P(x,-x) )  = (1 - P(x,-x)) \phi(-x),
\end{align*}
Hence,
\begin{align}\label{P(x,x)_P(x,-x)}
	P(x,x) = \frac{\phi(x)}{1 - \phi(x)}, \qquad P(x,-x) = \frac{\phi(-x)}{1 + \phi(-x)}.
\end{align}

Using \eqref{K=0region},  $x^k P_{y^k}(x,y)|_{y=x} =  P(x,x)$, $x^k P_{y^k}(x,y)|_{y=-x} = -  P(x,-x)$,  \eqref{P(x,x)_P(x,-x)}, and
the homogeneity of $\psi$,
 we have
\begin{align*}
	F(x,x) & = \psi ( x + x P(x,x)) \left\{ 1  +  P(x,x)\right\} = [1  +  P(x,x)]^2 \psi(x) = \frac{\psi(x)}{[1-\phi(x)]^2}, \\
	F(x,-x) & = \psi ( -x + x P(x,-x)) \left\{ 1  -  P(x,-x)\right\} = [1  -  P(x,-x)]^2 \psi(-x) = \frac{\psi(-x)}{[1+\phi(-x)]^2}.
\end{align*}
Hence,
	\[ \max \{F(x,x), F(x,-x) \} = \max \left\{ \frac{\psi(x)}{[1-\phi(x)]^2}, \frac{\psi(-x)}{[1+\phi(-x)]^2} \right\}
\leq \frac{\max \{ \psi(x),\psi(-x) \}}{[1-\phi(x)]^2}. \]
Since \(\lambda_\psi \geq 1\) is the reversibility of \(\psi\), we have \(\psi(-x)\le\lambda_\psi\psi(x)\).  It yields \eqref{maxF(x,x,-x)}. This completes the proof.
\end{proof}

We shall use the following two results from \cite{LiZhao_Hilbert4}: the distance formula and the gradient/$S$-curvature estimates.
\begin{theorem}\cite[Theorem 3.6]{LiZhao_Hilbert4} \label{K=0lemdx_0xdxx_0}Let $(\Omega,F)$ be a projectively flat Finsler manifold  with $\mathbf{K}=0$, where $\Omega\subset \mathbb{R}^n$ is a  strictly convex domain containing the origin. Define $\psi(y)$ and $\phi(y)$ as in \eqref{psiphi}.
	Then  for any $x_1,x_2\in \Omega$,
	\begin{equation*}
		d_F(x_1,x_2) =  \frac{F(x_1, x_2-x_1)}{1-P(x_1, x_2-x_1)}.
	\end{equation*}
	In particular, this implies
	\begin{equation}\label{coreqd0xdx0}
		d_F(\mathbf{0},x) =  \frac{\psi(x)}{1-\phi(x)}, \qquad d_F(x, \mathbf{0})= \frac{\psi(-x)}{1+ \phi(-x)}.
	\end{equation}
\end{theorem}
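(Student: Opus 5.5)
The plan is to compute the $F$-length of the straight segment from $x_1$ to $x_2$ explicitly, by solving two ODEs along the line, and then to argue that this segment is the minimizing path. Set $v:=x_2-x_1$ and $c(s):=x_1+sv$ for $s\in[0,1]$. Since $\Omega$ is (strictly) convex, the segment lies in $\Omega$. Define
\[
\mathcal{P}(s):=P(c(s),v),\qquad \mathcal{F}(s):=F(c(s),v).
\]

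\emph{Step 1: two ODEs along the line.} Projective flatness gives the Hamel-type identity $F_{x^k}y^k=2PF$, which is the defining relation $P=F_{x^k}y^k/(2F)$. Evaluated at $y=v$, this yields $\mathcal{F}'(s)=2\mathcal{P}(s)\mathcal{F}(s)$. For $\mathcal{P}$, I will use Berwald's curvature formula for projectively flat metrics: the flag curvature is determined by $\Xi:=P^2-P_{x^k}y^k$, and in particular $\mathbf K F^2=\Xi$. Hence $\mathbf K=0$ forces $P_{x^k}y^k=P^2$, which gives $\mathcal{P}'=\mathcal{P}^2$. As an independent check, one can differentiate the implicit relation \eqref{P_phi} along the line. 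Solving the two ODEs with $\mathcal{P}_0:=P(x_1,v)$ and $\mathcal{F}_0:=F(x_1,v)$ gives
\[
\mathcal{P}(s)=\frac{\mathcal{P}_0}{1-\mathcal{P}_0 s},\qquad \mathcal{F}(s)=\frac{\mathcal{F}_0}{(1-\mathcal{P}_0 s)^2}.
\]
Both are valid as long as $1-\mathcal{P}_0 s>0$. The blow-up at $s=1/\mathcal{P}_0$ corresponds to the line leaving $\Omega$; compare the proof of Lemma~\ref{maxF}, where $P(x,-x)<1$ was obtained in the same way from $x\in\Omega$. Since $c(1)=x_2\in\Omega$, we get $\mathcal{P}_0<1$. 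Integrating,
\[
L_F(c)=\int_0^1\frac{\mathcal{F}_0\,\dd s}{(1-\mathcal{P}_0 s)^2}=\frac{\mathcal{F}_0}{1-\mathcal{P}_0}.
\]
This formula also covers $\mathcal{P}_0=0$.

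\emph{Step 2: the segment is minimizing.} This is the step I expect to be the main obstacle. Forward completeness and the Finsler Hopf--Rinow theorem give a minimizing geodesic from $x_1$ to $x_2$. By projective flatness its trace is a straight line, reparametrized by $\ddot s=-2P\dot s^2$ with $\dot s>0$. Since $\Omega$ is strictly convex, the only straight path inside $\Omega$ joining $x_1$ to $x_2$ is the segment $c$. Length is invariant under orientation-preserving reparametrization, so $d_F(x_1,x_2)=L_F(c)$, which is the claimed formula. The delicate point is ruling out a geodesic that runs along the line in the wrong direction or exits $\Omega$. Staying in $\Omega$ is automatic, since geodesics of $(\Omega,F)$ are curves in $\Omega$, and the monotonicity of $s$ follows from $\dot s\neq0$ for a regular geodesic.

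\emph{Step 3: the special cases.} For $x_1=\mathbf{0}$, we have $F(\mathbf{0},x)=\psi(x)$ and $P(\mathbf{0},x)=\phi(x)$, which gives $d_F(\mathbf{0},x)=\psi(x)/(1-\phi(x))$. For $x_2=\mathbf{0}$, take $v=-x$ and use the identities from the proof of Lemma~\ref{maxF}:
\[
F(x,-x)=\frac{\psi(-x)}{[1+\phi(-x)]^2},\qquad P(x,-x)=\frac{\phi(-x)}{1+\phi(-x)}.
\]
Then $1-P(x,-x)=1/(1+\phi(-x))$, and therefore $d_F(x,\mathbf{0})=\psi(-x)/(1+\phi(-x))$. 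This is \eqref{coreqd0xdx0}.
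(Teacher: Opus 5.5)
The paper only quotes this result from Li--Zhao, so there is no proof in it to compare with. Judged on its own, your Step~1 is correct. The identity $F_{x^k}y^k=2PF$ and the relation $\mathbf K F^2=P^2-P_{x^k}y^k$ give $\mathcal F'=2\mathcal P\mathcal F$ and $\mathcal P'=\mathcal P^2$. Since $\mathcal P$ is finite on $[0,1]$ (the segment lies in $\Omega$ by convexity), ODE uniqueness forces $\mathcal P_0<1$, and the length integral is right.

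The genuine gap is Step~2. The statement does \emph{not} assume that $(\Omega,F)$ is forward complete; its only global hypothesis is convexity of $\Omega$. So Hopf--Rinow is not available, and you have no minimizing geodesic to compare the segment with. For instance, a Minkowski norm restricted to a Euclidean ball, or Berwald's metric restricted to a smaller concentric ball, satisfies every hypothesis but is not forward complete. As written, your argument proves the formula only in the forward complete case, which is admittedly the only case this paper uses.

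Step~3 has the same dependence, since Lemma~\ref{maxF} rests on the forward-complete classification. You can avoid it by applying Step~1 with $x_1=x$ and $v=-x$. The endpoint values $\mathcal P(1)=\phi(-x)$ and $\mathcal F(1)=\psi(-x)$ give $\mathcal P_0=\phi(-x)/(1+\phi(-x))$, and then $d_F(x,\mathbf 0)=\mathcal F_0/(1-\mathcal P_0)=\mathcal F(1)(1-\mathcal P_0)=\psi(-x)/(1+\phi(-x))$.

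The missing idea, and the place where convexity genuinely enters, is a calibration by the segments issuing from $x_1$. Set
\[
f(x):=\frac{F(x_1,x-x_1)}{1-P(x_1,x-x_1)},\qquad x\in\Omega .
\]
By convexity and Step~1 the denominator is positive. Hence $f$ is smooth on $\Omega\setminus\{x_1\}$, continuous with $f(x_1)=0$, and $f(x)$ is the $F$-length of the segment from $x_1$ to $x$.

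These segments are pregeodesics with a common initial point. The first variation formula therefore gives $\dd f_x(y)=g_T(T,y)\le F(x,T)F(x,y)=F(x,y)$, where $T$ is the $F$-unit tangent of the segment at $x$. Now take any Lipschitz curve $\zeta$ in $\Omega$ from $x_1$ to $x_2$. Integrating $(f\circ\zeta)'\le F(\zeta,\dot\zeta)$ from the last time $\zeta$ passes through $x_1$ yields $L_F(\zeta)\ge f(x_2)$, and the segment attains $f(x_2)$. This replaces Step~2 without any completeness assumption.

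Incidentally, in your Step~2 it is not strict convexity that singles out the segment. It is the monotone parametrization along the line through $x_1$ and $x_2$.
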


\begin{lemma}\label{K=0nabla} 
	Let $(\Omega,F)$ be an $n$-dimensional forward complete projectively flat Finsler manifold  with $\mathbf{K}=0$.
	Define $\psi(y)$ and $\phi(y)$ as in \eqref{psiphi}.  Then the following hold:
	\begin{enumerate}[\rm (i)]
		\item \label{nabla_r}
		the gradient of $r(x)$  is given by
		\begin{equation}\label{eqnabla_r}
			\nabla r(x) = \frac{  \psi(x) }{(\psi(x) + r(x) \phi(x))^2} x  = \frac{[1 - \phi(x)]^2}{\psi(x)}x,\quad  \forall x\in \Omega\backslash\{\mathbf{0}\};
		\end{equation}
		\item \label{K=0_S}
		the $S$-curvature $\mathbf{S}$ with respect to the Lebesgue measure $\mathscr{L}^n$ satisfies
		$\mathbf{S} =  (n+1) P\geq 0$. In particular,
		\begin{equation}\label{Socntk=0}
			0 \leq \mathbf{S}(x,\nabla r(x)) = (n+1) \frac{\phi(x)}{r(x)}, \quad \forall x\in  \Omega\backslash \{\mathbf{0}\}.
		\end{equation}
	\end{enumerate}
\end{lemma}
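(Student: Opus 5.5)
The plan is to get both parts from the explicit distance formula \eqref{coreqd0xdx0} and the spray form $G^i=Py^i$, with Lemma~\ref{maxF} supplying the value of $F(x,x)$.

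\emph{Step 1 (set-up).} If $\phi\equiv0$, then $F=\psi$ is Minkowskian and $P\equiv0$. In that case $r=\psi(x)$, $\nabla r=x/\psi(x)$ and $\mathbf S=0$, so both formulas hold trivially. Otherwise Theorem~\ref{thmK=0globalintro}\eqref{globalK=0(ii)} applies, so $\phi$ is a weak Minkowski norm. Hence $\Omega=\{\phi<1\}$ is convex, and it is strictly convex by the strict triangle inequality for weak Minkowski norms. Theorem~\ref{K=0lemdx_0xdxx_0} then gives
\[
r(x)=\frac{\psi(x)}{1-\phi(x)}.
\]

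\emph{Step 2 (gradient).} Projective flatness means the geodesics from $\mathbf 0$ are straight rays. The segment $t\mapsto tx$, $t\in[0,1]$, is minimizing because its length equals $r(x)$; this follows from the distance formula. Its unit-speed reparametrization $\gamma_y$ reaches $x$ with velocity parallel to $x$, and \eqref{geommeaingofr} says $\nabla r(x)=\dot\gamma_y(r)$. Therefore
\[
\nabla r(x)=c\,x \quad\text{with } c>0 \text{ and } F(x,cx)=1.
\]
By the computation inside the proof of Lemma~\ref{maxF}, $F(x,x)=\psi(x)/[1-\phi(x)]^2$. This gives $c=[1-\phi(x)]^2/\psi(x)$, the second expression in \eqref{eqnabla_r}.

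The first expression is the same quantity. Substituting $r=\psi/(1-\phi)$ gives $\psi+r\phi=\psi/(1-\phi)$, so
\[
\frac{\psi}{(\psi+r\phi)^2}=\frac{[1-\phi]^2}{\psi}.
\]
As a cross-check, one can verify $dr(x)=g_{\nabla r}(\nabla r,\cdot)$ directly, or note that $F^*(dr)=1$ and $dr(x)=1/c$ force equality in \eqref{dualff*} along $x$.

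\emph{Step 3 ($S$-curvature).} Since $\sigma\equiv1$, the formula after \eqref{Scurvaturedef} gives $\mathbf S=\partial_{y^i}(Py^i)$. Euler's relation $y^iP_{y^i}=P$ turns this into $\mathbf S=nP+P=(n+1)P$. Moreover $P\ge0$, because $P=\phi(\cdot)$ by \eqref{P_phi} and $\phi\ge0$.

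Now evaluate at $y=\nabla r=cx$. By homogeneity and \eqref{P(x,x)_P(x,-x)},
\[
P(x,cx)=c\,\frac{\phi(x)}{1-\phi(x)}=\frac{\phi(x)\,(1-\phi(x))}{\psi(x)}=\frac{\phi(x)}{r(x)}.
\]
This yields \eqref{Socntk=0}.

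The only delicate point is Step 2: identifying $\nabla r$ with the radial velocity requires the straight segments to be globally minimizing and $r$ to be smooth on $\Omega\setminus\{\mathbf 0\}$. Both follow from the closed formula for $r$ together with Theorem~\ref{K=0lemdx_0xdxx_0}. The rest is direct computation.
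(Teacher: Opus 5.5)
Your proof is correct. Part (ii) is the paper's argument: with $\sigma\equiv1$ you get $\mathbf S=\partial_{y^i}(Py^i)=(n+1)P$, and then $P(x,\nabla r)=\phi/r$ by homogeneity and \eqref{P(x,x)_P(x,-x)}.

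Part (i) takes a different route. In the Minkowski case the paper checks $\mathfrak L(x,V(x))=\dd r$ directly via Euler's theorem. For $\phi\not\equiv0$ it defers to a computation ``from the expression \eqref{K=0region} for $F$'' in \cite{LiZhao_Hilbert4}. Identifying $\mathfrak L^{-1}(\dd r)$ that way in effect requires $F_{y^j}(x,x)$, and hence $y$-derivatives of the implicitly defined $P$.

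You bypass this. You only use the scalar $F(x,x)=\psi(x)/[1-\phi(x)]^2$ from Lemma~\ref{maxF}, together with either minimality of the radial segments or the equality case of \eqref{dualff*}. Your ``cross-check'' is in fact a complete proof by itself, and it removes the need for \eqref{geommeaingofr}. The argument runs as follows.
\begin{itemize}
\item From $r(z)-r(x)\le d_F(x,z)$ you get $F^*(\dd r)\le1$.
\item Directly, $\langle \dd r,x\rangle=\frac{\dd}{\dd t}\Big|_{t=1}\frac{t\psi(x)}{1-t\phi(x)}=\psi(x)/[1-\phi(x)]^2=F(x,x)$.
\item Hence $1=\langle \dd r,cx\rangle\le F^*(\dd r)\,F(x,cx)=F^*(\dd r)\le1$. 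Equality in \eqref{dualff*} then gives $\dd r=\alpha\,\mathfrak L(x,cx)$ with $\alpha=F^*(\dd r)=1$.
\end{itemize}

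If you keep the geodesic version, write out the length computation rather than saying it ``follows from the distance formula'':
\[
\int_0^1F(tx,x)\,\dd t=\psi(x)\int_0^1(1-t\phi(x))^{-2}\,\dd t=\frac{\psi(x)}{1-\phi(x)}.
\]
Your explicit check that $\{\phi<1\}$ is strictly convex, which is needed to invoke Theorem~\ref{K=0lemdx_0xdxx_0}, is a point the paper leaves implicit.

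The paper's Legendre-transform verification is purely pointwise algebra once $r$ is known, and it needs no global minimality or Lipschitz argument. Yours is shorter and self-contained within this paper, at the cost of that one global input ($F^*(\dd r)\le1$, or minimality of the radial segments).
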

\begin{proof}
If $\phi\equiv 0$, then by Theorem~\ref{thmK=0globalintro}, $(\Omega,F)=(\mathbb{R}^n,\psi)$ is a Minkowski space. In this case $P\equiv 0$, $F(x,y)=\psi(y)$, and $r(x)=\psi(x)$. We first prove the gradient formula. Set $V(x):=x/\psi(x)$. Then $\psi(V(x))=1$. By Euler's theorem for the positively homogeneous function $\psi$, for any $X\in\mathbb{R}^n$,
	\[
	g_{V(x)}(V(x),X)
	= g_{ij}(V(x)) V^i(x) X^j
	= \psi(V(x)) \psi_{y^j}(V(x)) X^j
	= \psi_{y^j}(V(x)) X^j.
	\]
	Since $\psi_{y^j}$ is positively homogeneous of degree zero,
	$\psi_{y^j}(V(x)) = \psi_{y^j}(x).$
	Hence
	\[
	g_{V(x)}(V(x),X)
	= \psi_{y^j}(x) X^j
	= {\dd}\psi(x)(X)
	= {\dd}r(x)(X).
	\]
	By the definition of the Legendre transformation $\mathfrak{L}$,
	\[
	\mathfrak{L}(x,V(x))(X)
	= g_{V(x)}(V(x),X)
	= {\dd}r(x)(X),
	\]
	so
	\[
	\mathfrak{L}(x,V(x)) = {\dd}r(x).
	\]
	Since $\mathfrak{L}$ is a diffeomorphism from $T_xM\backslash\{0\}$ to $T_x^*M\backslash\{0\}$, we conclude
	\[
	V(x) = \mathfrak{L}^{-1}({\dd}r(x)) = \nabla r(x).
	\]
	This proves \eqref{eqnabla_r} when $\phi\equiv 0$. The $S$-curvature formula follows from $P\equiv 0$, hence $\mathbf{S}=(n+1)P=0$, and the right-hand side of \eqref{Socntk=0} is also zero since $\phi\equiv 0$.

	Now assume $\phi\not\equiv 0$. Then by Theorem~\ref{thmK=0globalintro}, the manifold is not backward complete. The proof of \eqref{eqnabla_r} and \eqref{Socntk=0} in this case is exactly the same as in \cite[Lemma 6.6]{LiZhao_Hilbert4}. For the reader's convenience, we sketch the main steps. From \eqref{coreqd0xdx0}, we have $r=\psi/(1-\phi)$, or equivalently $\psi=r(1-\phi)$. Using the expression \eqref{K=0region} for $F$ and the homogeneity of $\psi$ and $\phi$, one computes
	\[
	\nabla r(x)=\frac{[1-\phi(x)]^2}{\psi(x)}x,
	\]
	which is the second equality in \eqref{eqnabla_r}; the first equality follows from $r=\psi/(1-\phi)$. The $S$-curvature formula $\mathbf{S}=(n+1)P$ is standard for projectively flat metrics (see \cite[Lemma 6.6]{LiZhao_Hilbert4}), and evaluating it at $\nabla r$ using $P(x,\nabla r(x))=\phi(x)/r(x)$ gives \eqref{Socntk=0}. This completes the proof.
\end{proof}
\begin{remark}\label{rem:K=0nabla_general}
	In \cite[Lemma 6.6]{LiZhao_Hilbert4}, the above lemma was stated under the additional assumption that the manifold is forward complete but not backward complete. However, the proof there does not use the backward incompleteness; it only relies on the classification theorem (Theorem~\ref{thmK=0globalintro}) and the homogeneity of $\psi$ and $\phi$. In the Minkowski case $\phi\equiv 0$, the formulas reduce to the standard identities $\nabla r=x/\psi(x)$ and $\mathbf S=0$, which we verified directly above. Therefore the lemma remains valid in the more general setting stated here.
\end{remark}

The following estimates for the $S$-curvature will be used repeatedly in the integral estimates of  Sections~\ref{sec:sharp}, \ref{SharpHPW}, and \ref{section:K=0}.
\begin{lemma}\label{lem:S_estimates}
Let $(\Omega,F)$ be a forward complete but not backward complete projectively flat Finsler manifold with $\mathbf{K}=0$, and let $\psi,\phi$ be as in \eqref{psiphi}.
Let
\begin{equation} \label{rhomaxmin}
\bar{\varrho}:=\sup_{y\in \mathbb{R}^n\backslash\{\mathbf{0}\}}\frac{\psi(y)}{\phi(y)},\qquad
\hat{\varrho}:=\inf_{y\in \mathbb{R}^n\backslash\{\mathbf{0}\}}\frac{\psi(y)}{\phi(y)}.
\end{equation}
Then \(0<\hat{\varrho}\le\bar{\varrho}<+\infty\), and
\begin{align} \label{phi_rho_minmax}
	\frac{r}{\bar{\varrho} + r} \leq \phi \leq \frac{r}{\hat{\varrho} + r}.
	\end{align}
Consequently,
\begin{equation}\label{S>}
	\frac{n+1}{\bar{\varrho} + r(x)} \leq \mathbf{S}(x,\nabla r(x)) \leq \frac{n+1}{\hat{\varrho} + r(x)}.
	\end{equation}
\end{lemma}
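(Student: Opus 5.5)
The plan is to extract everything from the distance formula \eqref{coreqd0xdx0}, $r(x)=\psi(x)/(1-\phi(x))$, combined with positive $1$-homogeneity of $\psi$ and $\phi$.

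\emph{Finiteness and positivity of $\bar\varrho,\hat\varrho$.} In case \eqref{globalK=0(ii)} of Theorem~\ref{thmK=0globalintro}, $\phi$ is a weak Minkowski norm. By property \eqref{minkownor1} of a pseudo-norm, $\phi(y)>0$ for $y\neq\mathbf 0$, and $\phi$ is continuous. The same holds for $\psi$, which is a Minkowski norm. The ratio $\psi/\phi$ is $0$-homogeneous, so its sup and inf over $\mathbb R^n\setminus\{\mathbf 0\}$ equal its max and min over the Euclidean unit sphere. That sphere is compact and the ratio is continuous and positive on it, so $0<\hat\varrho\le\bar\varrho<+\infty$.

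\emph{Proof of \eqref{phi_rho_minmax}.} Fix $x\in\Omega\setminus\{\mathbf 0\}$, so $0<\phi(x)<1$ since $\Omega=\{\phi<1\}$. From $r=\psi/(1-\phi)$ we get
\[
r=\frac{\psi(x)}{\phi(x)}\cdot\frac{\phi(x)}{1-\phi(x)}.
\]
Writing $\varrho(x):=\psi(x)/\phi(x)\in[\hat\varrho,\bar\varrho]$, this gives $r(1-\phi)=\varrho\phi$, hence
\[
\phi=\frac{r}{\varrho(x)+r}.
\]
The map $\varrho\mapsto r/(\varrho+r)$ is decreasing in $\varrho$. Substituting $\varrho=\bar\varrho$ gives the lower bound and $\varrho=\hat\varrho$ gives the upper bound. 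At $x=\mathbf 0$ both sides vanish, since $r=0$ and $\phi(\mathbf 0)=0$.

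\emph{Proof of \eqref{S>}.} Lemma~\ref{K=0nabla}\eqref{K=0_S} gives $\mathbf S(x,\nabla r)=(n+1)\phi(x)/r(x)$. Dividing \eqref{phi_rho_minmax} by $r>0$ and multiplying by $n+1$ yields \eqref{S>} directly.

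No step here is a genuine obstacle. The only point needing care is positivity of $\phi$ away from the origin, which guarantees $\hat\varrho<\infty$ is meaningful and $\bar\varrho<\infty$. This comes from $\phi$ being a weak Minkowski norm in the not backward complete case, as stated in the classification theorem.
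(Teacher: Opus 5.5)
Your proof is correct and follows the paper's argument. Positivity of $\phi$ (as a weak Minkowski norm) plus compactness of the unit sphere gives $0<\hat\varrho\le\bar\varrho<+\infty$. The distance formula $r=\psi/(1-\phi)$ then turns $\hat\varrho\phi\le\psi\le\bar\varrho\phi$ into the two-sided bound on $\phi$, and Lemma~\ref{K=0nabla} yields \eqref{S>}. Your identity $\phi=r/(\varrho(x)+r)$ with monotonicity in $\varrho$ is just a repackaging of the paper's rearrangement of $\hat\varrho\phi\le r(1-\phi)\le\bar\varrho\phi$.
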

\begin{proof}
Since the manifold is not backward complete, the classification theorem (Theorem \ref{thmK=0globalintro}) ensures 
$0 < \phi(y) <1$ for all $y\in \Omega\backslash\{\mathbf{0}\}$, so the constants $\bar{\varrho}$ and $\hat{\varrho}$ are well-defined. By the positive homogeneity of \(\psi\) and \(\phi\), the ratio \(\psi(y)/\phi(y)\) is independent of the scaling of \(y\), hence depends only on the direction. Since the unit sphere is compact, the supremum and infimum are finite and positive, so \(0<\hat\varrho\le\bar\varrho< +\infty\).

From \eqref{coreqd0xdx0}, we have \(r=\psi/(1-\phi)\), or equivalently \(\psi=r(1-\phi)\). Substituting this into the inequality \(\hat\varrho\phi\le\psi\le\bar\varrho\phi\) yields
\[
\hat\varrho\phi\le r(1-\phi)\le \bar\varrho\phi.
\]
The right-hand inequality \(r(1-\phi)\le\bar\varrho\phi\) gives
$r\le(\bar\varrho+r)\phi$,
hence
$
\phi\ge  r/({\bar\varrho+r}).
$
The left-hand inequality \(\hat\varrho\phi\le r(1-\phi)\) gives
$
(\hat\varrho+r)\phi\le r,
$
hence
$
\phi\le  {r}/{\hat\varrho+r}.
$
Since $0 \leq \phi < 1$, dividing by \(\bar\varrho+r>0\) is legitimate.
Thus \eqref{phi_rho_minmax} is proved.

Substituting \eqref{phi_rho_minmax} into \(\mathbf S(x,\nabla r(x))=(n+1)\phi(x)/r(x)\) from \eqref{Socntk=0} gives
\eqref{S>}.
This completes the proof.
\end{proof}

\vskip 10mm
\section{Hardy-type inequalities and Sharp constant for the generalized CKN inequality}\label{sec:sharp}

This section is devoted to the proof of  the weighted Hardy-type inequality and the generalized CKN inequality with sharp constant. The main results are Theorem~\ref{main_thmgeneral1}, Theorem~\ref{thm:CKNHPW_threshold}\eqref{1.2(i)}, and  Theorem~\ref{thm:CKNHPW_sharp}\eqref{1.3(i)}.

The section is organized as follows. In \S\ref{sec:aux} we collect auxiliary lemmas needed throughout this section and next two sections. In \S\ref{sec:sharp_Hardy} we first prove the weighted Hardy-type inequality (Theorem~\ref{main_thmgeneral1}) and its sharpness when $\lambda_{\psi}=1$ (Corollary~\ref{maincorollary}) on projectively flat zero-curvature Finsler manifolds, and give the  proof of Theorem~\ref{thm:CKNHPW_threshold}\eqref{1.2(i)}. In \S\ref{sec:sharp_CKN} we apply H\"older's inequality  and establish the sharp constants for the generalized CKN inequality (Theorem~\ref{thm:CKNHPW_sharp}\eqref{1.3(i)}) based on Theorems~\ref{sbeerHolder}.

\subsection{Auxiliary estimates and functionals}\label{sec:aux}
For the fixed origin $\mathbf{0}$, we write $\hat{\sigma}_{\mathbf{0}}(r,y)$ in place of $\hat{\sigma}_{o}(r,y)$ when $o = \mathbf{0}$.
\begin{lemma}\label{lem_hat_sigma}
Let $(\Omega, F, \mathscr{L}^n)$ be a forward complete but not backward complete projectively flat Finsler metric manifold with $\mathbf{K}=0$  endowed with the Lebesgue measure.
Let $(r,y)$ be the polar coordinate system around $\mathbf{0}$. Then the measure density function $\hat{\sigma}_{\mathbf{0}}(r,y)$  satisfies
\begin{align} \label{hat_sigma}
{\hat{\varrho}}^{n+1} e^{-\tau(\mathbf{0},y)} r^{n-1}\,(r+\hat{\varrho})^{-(n+1)}
\leq  \hat{\sigma}_{\mathbf{0}}(r,y)  \leq {\bar{\varrho}}^{n+1} e^{-\tau(\mathbf{0},y)} r^{n-1}\,(r+\bar{\varrho})^{-(n+1)},
\end{align}
where $\hat{\varrho}$ and $\bar{\varrho}$ are defined in \eqref{rhomaxmin}.
Moreover,
let $B_{t_1}(\mathbf{0})$ and $B_{t_2}(\mathbf{0})$ be two geodesic balls with radius $ 0 \leq t_1 \leq t_2 \leq +\infty$, then
for any non-negative radial function $f = f(r) \in L^1(\Omega,\mathscr{L}^n)$,
\begin{align} \label{inte_ine}
\mathscr{I}_{\m}(\mathbf{0})  {\hat{\varrho}}^{n+1} \int_{t_1}^{t_2} f(r)\, \frac{r^{n-1}}{(r+\hat{\varrho})^{n+1}}\, {\dd}r \leq \int_{B_{t_2}(\mathbf{0})\backslash B_{t_1}(\mathbf{0})} f(r)\,{\dd}x \leq  \mathscr{I}_{\m}(\mathbf{0})  {\bar{\varrho}}^{n+1} \int_{t_1}^{t_2} f(r)\, \frac{r^{n-1}}{(r+\bar{\varrho})^{n+1}}\, {\dd}r.
\end{align}
\end{lemma}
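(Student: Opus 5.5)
The plan is to start from the exact identity \eqref{simga_0} of Remark~\ref{rem:volume_comparison_equality}: since $\mathbf K=0$, $\hat\sigma_{\mathbf 0}(r,y)=e^{-\tau(\gamma_y(r),\dot\gamma_y(r))}r^{n-1}$. So it suffices to show, for every $y\in S_{\mathbf 0}\Omega$ and $r\in(0,i_y)$,
\[
\left(\frac{\hat\varrho}{r+\hat\varrho}\right)^{n+1}e^{-\tau(\mathbf 0,y)}\le e^{-\tau(\gamma_y(r),\dot\gamma_y(r))}\le \left(\frac{\bar\varrho}{r+\bar\varrho}\right)^{n+1}e^{-\tau(\mathbf 0,y)} .
\]

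By the definition \eqref{Scurvaturedef} of the $S$-curvature, applied along the unit-speed geodesic $\gamma_y$, we have
\[
\frac{\dd}{\dd t}\tau(\gamma_y(t),\dot\gamma_y(t))=\mathbf S(\gamma_y(t),\dot\gamma_y(t)).
\]
By \eqref{geommeaingofr}, $\dot\gamma_y(t)=\nabla r|_{\gamma_y(t)}$ and $r(\gamma_y(t))=t$. Lemma~\ref{lem:S_estimates}, specifically \eqref{S>}, then gives
\[
\frac{n+1}{\bar\varrho+t}\le \frac{\dd}{\dd t}\tau\le \frac{n+1}{\hat\varrho+t}.
\]
Integrating from $0$ to $r$, and using $\tau(\gamma_y(0),\dot\gamma_y(0))=\tau(\mathbf 0,y)$, we get
\[
(n+1)\ln\frac{\bar\varrho+r}{\bar\varrho}\le \tau(\gamma_y(r),\dot\gamma_y(r))-\tau(\mathbf 0,y)\le (n+1)\ln\frac{\hat\varrho+r}{\hat\varrho}.
\]
Exponentiating with a minus sign reverses the inequalities and yields exactly the displayed two-sided bound, hence \eqref{hat_sigma}.

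The one delicate point is that \eqref{S>} is stated for $x\neq\mathbf 0$, so the derivative bound holds only for $t>0$. Since $\tau$ is continuous along $\gamma_y$ up to $t=0$ and the bounds are integrable near $0$, the integration is nevertheless legitimate. One also needs $i_y=+\infty$ here: by Theorem~\ref{K=0lemdx_0xdxx_0}, rays from $\mathbf 0$ are straight and minimizing, and forward completeness lets them run for all $r$. The polar coordinates therefore cover $\Omega\setminus\{\mathbf 0\}$ with $r\in(0,\infty)$.

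For \eqref{inte_ine}, we write the integral in polar coordinates using \eqref{volumeexprsso}:
\[
\int_{B_{t_2}\setminus B_{t_1}}f\,\dd x=\int_{S_{\mathbf 0}\Omega}\dd\nu_{\mathbf 0}(y)\int_{t_1}^{t_2}f(r)\hat\sigma_{\mathbf 0}(r,y)\,\dd r .
\]
We then insert \eqref{hat_sigma}, which is legitimate because $f\ge0$. The $r$-integral factors out because $f$ is radial, and the remaining angular integral $\int e^{-\tau(\mathbf 0,y)}\dd\nu_{\mathbf 0}$ is exactly $\mathscr I_{\m}(\mathbf 0)$ by \eqref{integralofdistortion}. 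The main step is the $S$-curvature integration above; everything else is bookkeeping.
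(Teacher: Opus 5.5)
Your proposal is correct and follows the paper's proof essentially step for step. It uses the exact identity $\hat\sigma_{\mathbf 0}(r,y)=e^{-\tau(\gamma_y(r),\dot\gamma_y(r))}r^{n-1}$ from the $\mathbf K=0$ equality case, integrates the $S$-curvature bounds \eqref{S>} along $\gamma_y$, exponentiates, and then passes to polar coordinates, where the angular integral of $e^{-\tau(\mathbf 0,y)}$ gives $\mathscr I_{\m}(\mathbf 0)$. Your two extra remarks, that \eqref{S>} only holds for $t>0$ and that $i_y=+\infty$, make explicit points the paper uses without comment.
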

\begin{proof}
Let $(r,y)$ be the polar coordinate system around $\mathbf{0}$.
By  \eqref{Scurvaturedef}, \eqref{geommeaingofr}, and \eqref{S>}, we have
\[
\tau(\gamma_y(r), \dot{\gamma}_y(r)) - \tau(\mathbf{0}, y) = \int_0^r \mathbf{S}(\gamma_y(t), \dot{\gamma}_y(t)) \, \mathrm{d}t \geq \int_0^r  \frac{n+1}{\bar{\varrho}+t} \mathrm{d}t =   \ln(\bar{\varrho}+r)^{n+1} - \ln{\bar{\varrho}}^{n+1}.
\]
\[
\tau(\gamma_y(r), \dot{\gamma}_y(r)) - \tau(\mathbf{0}, y) = \int_0^r \mathbf{S}(\gamma_y(t), \dot{\gamma}_y(t)) \, \mathrm{d}t \leq \int_0^r  \frac{n+1}{\hat{\varrho}+t} \mathrm{d}t =   \ln(\hat{\varrho}+r)^{n+1} - \ln{\hat{\varrho}}^{n+1}.
\]
Together with Theorem \ref{bascivolurcompar} and \eqref{simga_0} in Remark \ref{rem:volume_comparison_equality} this yields the key comparison
\begin{align*}
\hat{\sigma}_{\mathbf{0}}(r,y) &=  e^{-\tau(\gamma_y(r),\dot{\gamma}_y(r))} r^{n-1}\leq {\bar{\varrho}}^{n+1} e^{-\tau(\mathbf{0},y)} r^{n-1}\,(r+\bar{\varrho})^{-(n+1)}, \\
	\hat{\sigma}_{\mathbf{0}}(r,y) &=  e^{-\tau(\gamma_y(r),\dot{\gamma}_y(r))} r^{n-1}\geq {\hat{\varrho}}^{n+1} e^{-\tau(\mathbf{0},y)} r^{n-1}\,(r+\hat{\varrho})^{-(n+1)}, \quad \forall  r\in (0,+\infty), \ y\in S_{\mathbf{0}}\Omega.
\end{align*}
This proves \eqref{hat_sigma}.

By \eqref{integralofdistortion}, for any \(f = f(r)\in L^1(\Omega,\mathscr{L}^n)\) and  $ 0 \leq t_1 \leq t_2 \leq +\infty$, there holds
\[
\int_{B_{t_2}(\mathbf{0})\backslash B_{t_1}(\mathbf{0})} f(r)\,{\dd}x =\int_{S_{\mathbf{0}}M}{\dd}\nu_\mathbf{0}(y)\int_{t_1}^{t_2} f(r)\,\hat\sigma_\mathbf{0}(r,y)\,{\dd}r,
\]
which together with \eqref{hat_sigma} and $f\geq 0$
yields \eqref{inte_ine} and hence completes the proof.
\end{proof}

\begin{lemma}\label{lemma_u^p_r^p}
Let $(\Omega, F, \mathscr{L}^n)$ be a forward complete but not backward complete projectively flat Finsler metric manifold with $\mathbf{K}=0$ and the generating Minkowski norm $\psi$ reversible, endowed with the Lebesgue measure.
	Let  $a > p >1$, $s>2$, and $0 \leq b < 1$ satisfying
\begin{align}\label{condition_b*}
b< \frac{p+s-1}{p s} - \frac{(a-p)(s-2)}{aps}.
\end{align}
Define
\begin{equation}\label{func_h}
u_{\kappa} = - \eta \bigl(\kappa +r^{\frac{p+s-1-bps}{p-1}}\bigr)^{\frac{1-p}{a-p}},
\end{equation}
 where
 \[\eta :=  \left[ \frac{p+s-1-bps}{a-p} \right]^{\frac{p-1}{a-p}}, \quad \kappa := R^{\frac{p+s-1-bps}{p-1}}, \quad R> \max\{ 2,  \hat{\varrho}\}. \]
  Then the following statements hold$:$
	\begin{enumerate}[\rm (i)]
\item \label{lemma_ui}
\[\int_{\Omega} r^{bps} F^{*p}({\dd}u_\kappa)\, {\dd}x <+\infty; \]
\item \label{lemma_u0}
\begin{align} \label{threeintegral}
 \int_{\Omega} r^{bps} F^{*p}({\dd}u_\kappa)\, {\dd}x  =  \int_{\Omega} |u_\kappa|^{p'(a-1)} r^{(1-b)p's}\, {\dd}x
=
\int_{\Omega} {|u_\kappa|^{a-1}}{r^{s}}  F^*({\dd}u_\kappa)\, {\dd}x;
\end{align}
\item \label{lemma_uii}
there exists some positive constant $C = C(a,b,n,p,s, \hat{\varrho})$ such that
\begin{equation}\label{CKNintlowerboundR}
\begin{split}
\int_{\Omega} r^{bps} F^{*p}({\dd}u_\kappa)\, {\dd}x
& \geq C \mathscr{I}_{\m}(\mathbf{0})\kappa^{\frac{(1-a)p}{a-p}+1}  R^{ s-2};
\end{split}
\end{equation}

\vskip 3mm
\item \label{lemma_uiii}
\begin{equation} \label{eqlemma_uiii}
 \lim_{R\rightarrow +\infty} \frac{ \int_{\Omega \setminus B_{e^{R}}(\mathbf{0})} r^{bps} F^{*p}({\dd}u_\kappa)\, {\dd}x}{ \int_{\Omega} r^{bps} F^{*p}({\dd}u_\kappa) {\dd}x }   = 0;
\end{equation}
\item \label{lemma_uiv}
\begin{equation} \label{eqlemma_uiv}
\lim_{R\rightarrow +\infty} \frac{\int_{B_{e^{-R}}(\mathbf{0})} r^{bps} F^{*p}({\dd}u_\kappa)\, {\dd}x}{ \int_{\Omega} r^{bps} F^{*p}({\dd}u_\kappa) {\dd}x }  = 0.
 \end{equation}
	\end{enumerate}
\end{lemma}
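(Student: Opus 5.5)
The plan is to exploit that $u_\kappa$ is radial and monotone, so that all three integrands in \eqref{threeintegral} coincide pointwise. Set $\gamma:=\frac{p+s-1-bps}{p-1}$. Since $b<1$ and $s>2$, we have $p+s-1-bps>(p-1)(1-bs)+s(1-b)\cdot$(positive), and in any case $\gamma>0$ follows from \eqref{condition_b*}. Because $\frac{1-p}{a-p}<0$, the function $r\mapsto u_\kappa(r)$ is negative and strictly increasing, with
\[
u_\kappa'(r)=\eta\,\gamma\,\frac{p-1}{a-p}\,r^{\gamma-1}\bigl(\kappa+r^{\gamma}\bigr)^{\frac{1-p}{a-p}-1}>0 .
\]
Hence ${\dd}u_\kappa=u_\kappa'(r)\,{\dd}r$ is a \emph{positive} multiple of ${\dd}r$. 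Positive homogeneity of $F^*$ together with $F^*({\dd}r)=1$ a.e. then gives $F^*({\dd}u_\kappa)=u_\kappa'(r)$, and the asymmetry of $F^*$ causes no difficulty.

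For (ii), I will check the pointwise identity
\[
r^{bs}u_\kappa'=|u_\kappa|^{\frac{a-1}{p-1}}\,r^{\frac{(1-b)s}{p-1}} .
\]
The powers of $(\kappa+r^\gamma)$ on both sides match, since $\frac{1-p}{a-p}-1=\frac{1-p}{a-p}\cdot\frac{a-1}{p-1}$. The powers of $r$ match by the definition of $\gamma$. The constants match exactly by the choice $\eta^{(a-p)/(p-1)}=\gamma\frac{p-1}{a-p}$. Raising this identity to the $p$-th power gives equality of the first two integrands. Multiplying it by $|u_\kappa|^{a-1}r^{s}u_\kappa'$ shows that the third integrand is the geometric mean of the first two, hence equal to them as well. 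So (ii) holds pointwise, before any integration.

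For (i), (iii), (iv) and (v), I will work with the common integrand $|u_\kappa|^{p'(a-1)}r^{(1-b)p's}$, which is radial and nonnegative. By \eqref{inte_ine} of Lemma~\ref{lem_hat_sigma}, every integral reduces to one-dimensional integrals against the weights $r^{n-1}(r+\varrho)^{-(n+1)}$, with $\varrho=\hat\varrho$ for lower bounds and $\varrho=\bar\varrho$ for upper bounds. These weights behave like $r^{n-1}$ near $0$ and like $r^{-2}$ at infinity. The two ends are then handled as follows.
\begin{itemize}
\item Near $r=0$, the integrand is bounded by $C\kappa^{-p(a-1)/(a-p)}r^{(1-b)p's+n-1}$, which is integrable because $(1-b)p's+n>0$.
\item Near $r=\infty$, the integrand behaves like $r^{(1-b)p's-\gamma p(a-1)/(a-p)-2}$. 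Integrability requires the exponent to be $<-1$, and after substituting $\gamma$ this should be exactly \eqref{condition_b*}. This exponent bookkeeping is the step I expect to be the main obstacle, and I will verify it carefully.
\end{itemize}
This gives (i).

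For the lower bound (iii), I will restrict the integral to $r\in[R,2R]$. There $\kappa\le\kappa+r^\gamma\le(1+2^\gamma)\kappa$, and since $R>\hat\varrho$, the weight is $\gtrsim r^{-2}$. This yields a bound $\gtrsim\mathscr{I}_{\m}(\mathbf 0)\,\kappa^{-p(a-1)/(a-p)}R^{(1-b)p's-1}$. Rewriting with $\kappa=R^\gamma$, i.e. $\kappa^{1}R^{s-2}=R^{\gamma+s-2}$, and using $(1-b)p's-1=\gamma+s-2$ (again from the definition of $\gamma$), produces \eqref{CKNintlowerboundR}.

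For (iv), on $r>e^R$ I will use $\kappa+r^\gamma\ge r^\gamma$ and the decaying power established in (i). This bounds the outer integral by $C e^{-\varepsilon R}$, where $\varepsilon>0$ is the integrability margin from \eqref{condition_b*}. Dividing by the lower bound (iii), which is only polynomial in $R$, the ratio tends to $0$.

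For (v), on $r<e^{-R}$ I will use $\kappa+r^\gamma\ge\kappa$. This bounds the inner integral by $C\kappa^{-p(a-1)/(a-p)}e^{-R((1-b)p's+n)}$. Compared with (iii), the ratio is of order $R^{-((1-b)p's-1)}e^{-R((1-b)p's+n)}$, which tends to $0$.
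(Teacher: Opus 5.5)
Your proposal is correct and follows essentially the same route as the paper: you reduce to one-dimensional radial integrals via \eqref{inte_ine} using $F^*({\dd}u_\kappa)=u_\kappa'$, prove (ii) by a pointwise identity, check that the decay exponent at infinity is $<-1$ exactly under \eqref{condition_b*}, get the lower bound from an interval near $r=R$, and control both tails by an exponential-versus-polynomial comparison. The differences are cosmetic. You work with the equal integrand $|u_\kappa|^{p'(a-1)}r^{(1-b)p's}$, and you take the lower bound on $[R,2R]$ rather than the paper's $[\hat\varrho,R]$; the latter has the small advantage of giving a constant in \eqref{CKNintlowerboundR} that is uniform over all admissible $R$, with no $\hat\varrho$-term to absorb.
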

\begin{proof}
Before proving \eqref{lemma_ui}-\eqref{lemma_uiv}, we first do some preparation.
For the assumption   $a > p$ and \eqref{condition_b*} which implies
\begin{equation} \label{condition_b1}
b <  \frac{p+s-1}{p s} - \frac{(a-p) (s-2)}{a p s} <   \frac{p+s-1}{p s},
\end{equation}
it can be verified directly that $u_{\kappa} < 0$ and
\begin{align}\label{u'_kappa}
u'_{\kappa} =  \eta^{\frac{a-1}{p-1}} \bigl(\kappa +r^{\frac{p+s-1-bps}{p-1}}\bigr)^{\frac{1-a}{a-p}} r^{\frac{s-bps}{p-1}}  > 0, \quad \forall r>0.
\end{align}
Together with the assumption on $a$, $b$ and $s$,  it follows that
\[
\frac{s-bps}{p-1} p + bps + n-1 = \frac{(1-b) ps}{p-1} + n-1 >0,
\]
and hence
\begin{align}
(u'_{\kappa})^p r^{bps + n-1} (r+\bar{\varrho})^{-(n+1)}  \leq
\begin{cases}
& \bar{\varrho}^{-(n+1)} \eta^{\frac{(a-1)p}{p-1}}   \kappa^{\frac{(1-a)p}{a-p}}, \quad r\in(0, 1], \\[4pt]
& \eta^{\frac{(a-1)p}{p-1}} r^{ \left( \frac{p+s-1-bps}{p-1} \frac{1-a}{a-p} + \frac{s-bps}{p-1} \right)p + bps -2}, \quad  r\in [1, +\infty).
\end{cases} \label{u'_kappa_r>1}
\end{align}
A direct computation shows that the power of $r$ in the upper bound when $r\in [1, +\infty)$ can be simplified as
\begin{align}
&\quad\left( \frac{p+s-1-bps}{p-1} \frac{1-a}{a-p} + \frac{s-bps}{p-1} \right)p + bps -2
 = \frac{abps}{a-p} - \frac{p+s-1}{a-p}a +s-3 < -1, \label{u'_kappa_r>1negative}
\end{align}
where the last inequality follows directly from \eqref{condition_b*}.
\medskip

\eqref{lemma_ui}
It follows from  \eqref{u'_kappa_r>1} and \eqref{u'_kappa_r>1negative} that
\[ \int_0^{+\infty} (u'_{\kappa})^p r^{bps + n-1} (r+\bar{\varrho})^{-(n+1)} {\dd}r  < +\infty. \]
Then it follows from $F^{*p}({\dd}u_\kappa) = (u'_\kappa)^p F^{*p}({\dd}r) =  (u'_\kappa)^p$ and \eqref{inte_ine} that
\begin{align}
\int_{\Omega} r^{bps} F^{*p}({\dd}u_\kappa) {\dd}x
   \leq \mathscr{I}_{\m}(\mathbf{0}) {\bar{\varrho}}^{n+1}  \int^{+\infty}_{0}  (u'_{\kappa})^p r^{bps + n-1} (r+\bar{\varrho})^{-(n+1)}  {\dd}r < + \infty. \label{u'_kappa_int}
\end{align}
This proves \eqref{lemma_ui}.

\medskip
\eqref{lemma_u0}
A direct computation shows that $u_\kappa$ satisfies
\begin{equation*}
r^{bps} (u'_{\kappa})^{p} =  |u_{\kappa}|^{(a-1)p'} r^{(1-b)p's} = {|u_{\kappa}|^{a-1}}{r^{s}} u'_{\kappa}.
\end{equation*}
Then by $F^{*p}({\dd}u_\kappa) = (u'_\kappa)^p F^{*p}({\dd}r) = (u'_\kappa)^p$, we have \eqref{threeintegral}.

\medskip

\eqref{lemma_uii}
By $F^{*p}({\dd}u_\kappa)  =  (u'_{\kappa})^p$ and \eqref{inte_ine}, we have
\begin{align}
\int_{\Omega} r^{bps} F^{*p}({\dd}u_\kappa) {\dd}x
&\geq  \mathscr{I}_{\m}(\mathbf{0}) {\hat{\varrho}}^{n+1}  \int^{+\infty}_{0}  (u'_{\kappa})^p r^{bps + n-1} (r+\hat{\varrho})^{-(n+1)} {\dd}r \notag \\
& \geq  \mathscr{I}_{\m}(\mathbf{0}) {\hat{\varrho}}^{n+1}  \int^{R}_{\hat{\varrho}}  (u'_{\kappa})^p r^{bps + n-1} (r+\hat{\varrho})^{-(n+1)} {\dd}r.
\label{lem_ukappa_ii}
\end{align}
By \eqref{u'_kappa}, $\kappa = R^{\frac{p+s-1-bps}{p-1}}$, $R>\hat{\varrho}$, and $a>p>1$, for $r\in [\hat{\varrho}, R]$,
\begin{align*}
& \quad (u'_{\kappa})^p r^{bps + n-1} (r+\hat{\varrho})^{-(n+1)} \\
 & \geq  \eta^{\frac{(a-1)p}{p-1}} \bigl(\kappa + \kappa \bigr)^{\frac{(1-a)p}{a-p}} (2r)^{-(n+1)} r^{\frac{(s-bps)p}{p-1} + bps + n-1}
 = : C_1(a,b,n,p,s, \hat{\varrho}) \kappa^{\frac{(1-a)p}{a-p}} r^{\frac{(1-b)ps}{p-1} -2},
\end{align*}
which together with \eqref{lem_ukappa_ii} yields
\begin{align*}
\int_{\Omega} r^{bps} F^{*p}({\dd}u_\kappa) {\dd}x
& \geq  \mathscr{I}_{\m}(\mathbf{0}) {\hat{\varrho}}^{n+1} C_1(a,b,n,p,s, \hat{\varrho}) \kappa^{\frac{(1-a)p}{a-p}} \int^{R}_{\hat{\varrho}} r^{\frac{(1-b)ps}{p-1} -2} {\dd}r \\
& = (\frac{(1-b)ps}{p-1} -1)^{-1} \mathscr{I}_{\m}(\mathbf{0}) {\hat{\varrho}}^{n+1} C_1(a,b,n,p,s, \hat{\varrho}) \kappa^{\frac{(1-a)p}{a-p}} \left[ R^{  \frac{(1-b)ps}{p-1} -1 } - {\hat{\varrho}}^{ \frac{(1-b)ps}{p-1} -1}   \right].
\end{align*}
By the assumption on $b$, we have $\frac{p+s-1-bps}{p-1} >0$ and hence
\[  \frac{(1-b)ps}{p-1} -1 = \frac{(p-1)(s-2) + p+s-1-bps}{p-1} > 0. \]
Then there exists some constant $C:=C(a,b,n,p,s, \hat{\varrho})$ such that
\begin{align*}
\int_{\Omega} r^{bps} F^{*p}({\dd}u_\kappa) {\dd}x
 \geq
 C \mathscr{I}_{\m}(\mathbf{0})\kappa^{\frac{(1-a)p}{a-p}}  R^{ \frac{(1-b)ps}{p-1} -1 }  > 0.
\end{align*}
Since $\kappa = R^{\frac{p+s-1-bps}{p-1}}$ and
 \[ \left( \frac{p+s-1-bps}{p-1}\right) \frac{(1-a)p}{a-p} + \frac{(1-b)ps}{p-1} -1 = \frac{p+s-1-bps}{p-1} \left[ \frac{(1-a)p}{a-p} +1 \right] + s-2, \]
it follows that
\begin{align*}
\int_{\Omega} r^{bps} F^{*p}({\dd}u_\kappa) {\dd}x
&\geq C \mathscr{I}_{\m}(\mathbf{0})\kappa^{\frac{(1-a)p}{a-p}+1}  R^{ s-2}.
\end{align*}
Thus, \eqref{lemma_uii} is proved.

\medskip

\eqref{lemma_uiii}
Again by $F^{*p}({\dd}u_\kappa)  =  (u'_\kappa)^p$ and \eqref{inte_ine}, we have
\begin{align}
\int_{\Omega \setminus B_{e^{R}}(\mathbf{0}) } r^{bps} F^{*p}({\dd}u_\kappa) {\dd}x
&\leq {\bar{\varrho}}^{n+1} \int_{S_{\mathbf{0}}\Omega}  \left( \int^{+ \infty}_{e^{R}}  e^{-\tau(\mathbf{0},y)}  (u'_{\kappa})^p r^{bps + n-1} (r+\bar{\varrho})^{-(n+1)} {\dd}r  \right) {\dd}\nu_{\mathbf{0}}(y) \notag \\
	& =  \mathscr{I}_{\m}(\mathbf{0}) {\bar{\varrho}}^{n+1}  \int^{+\infty}_{e^{R}}  (u'_{\kappa})^p r^{bps + n-1} (r+\bar{\varrho})^{-(n+1)} {\dd}r.\label{u'_kappa_int2}
\end{align}
Together with \eqref{u'_kappa_r>1} and \eqref{u'_kappa_r>1negative}, for $\theta: =\frac{abps}{a-p} - \frac{p+s-1}{a-p}a +s-3 <-1$, it follows that
\begin{align*}
\int_{\Omega \setminus B_{e^{R}}(\mathbf{0}) } r^{bps} F^{*p}({\dd}u_\kappa) {\dd}x
&\leq  \eta^{\frac{(a-1)p}{p-1}} {\bar{\varrho}}^{n+1} \mathscr{I}_{\m}(\mathbf{0})  \int^{+\infty}_{e^{R}}  r^{\theta} {\dd}r \\
&= \eta^{\frac{(a-1)p}{p-1}}  \mathscr{I}_{\m}(\mathbf{0}) {\bar{\varrho}}^{n+1}(-\theta-1)^{-1} e^{(\theta+1)R} .
\end{align*}
Hence, by \eqref{CKNintlowerboundR} and $\theta+1<0$, we have
\[ \lim_{R\rightarrow +\infty} \frac{ \int_{\Omega \setminus B_{e^{R}}(\mathbf{0})} r^{bps} F^{*p}({\dd}u_\kappa)\, {\dd}x}{ \int_{\Omega} r^{bps} F^{*p}({\dd}u_\kappa) {\dd}x }  = 0. \]

\eqref{lemma_uiv}
Similarly, by $F^{*p}({\dd}u_\kappa) =  (u'_\kappa)^p$ and \eqref{inte_ine}, we have
\begin{align}
\int_{B_{e^{-R}}(\mathbf{0})} r^{bps} F^{*p}({\dd}u_\kappa) {\dd}x
&\leq {\bar{\varrho}}^{n+1} \int_{S_{\mathbf{0}}\Omega}  \left( \int^{e^{-R}}_0  e^{-\tau(\mathbf{0},y)}  (u'_{\kappa})^p r^{bps + n-1} (r+\bar{\varrho})^{-(n+1)} {\dd}r  \right) {\dd}\nu_{\mathbf{0}}(y) \notag \\
& =  \mathscr{I}_{\m}(\mathbf{0}) {\bar{\varrho}}^{n+1}  \int^{e^{-R}}_{0}  (u'_{\kappa})^p r^{bps + n-1} (r+\bar{\varrho})^{-(n+1)} {\dd}r.
\end{align}
It together with \eqref{u'_kappa_r>1} for $r<1$ yields
\begin{align*}
\int_{B_{e^{-R}}(\mathbf{0})} r^{bps} F^{*p}({\dd}u_\kappa) {\dd}x
\leq \eta^{\frac{(a-1)p}{p-1}}  \kappa^{\frac{(1-a)p}{a-p}} \mathscr{I}_{\m}(\mathbf{0}) e^{-R},
\end{align*}
which together with \eqref{CKNintlowerboundR} proves \eqref{eqlemma_uiv}.

Therefore, the proof is completed.
\end{proof}

\subsection{Sharp weighted Hardy inequalities on projectively flat zero-curvature Finsler manifolds}\label{sec:sharp_Hardy}

We now prove the weighted Hardy-type inequality stated in Theorem \ref{main_thmgeneral1}. Then the proof of Theorem~\ref{thm:CKNHPW_threshold}\eqref{1.2(i)} and Corollary \ref{maincorollary} are followed.

\begin{proof}[Proof of Theorem \ref{main_thmgeneral1}]
For any $f = f(x) \in C^{\infty}_0 (\Omega) \backslash \{0\}$,
\eqref{eqnabla_r}, \eqref{absolute_dualff*}, and \eqref{maxF(x,x,-x)}  imply
\begin{align*}
\left| \left\langle {\dd}f, \nabla r    \right\rangle  \right|
& = \frac{[1 - \phi(x)]^2}{\psi(x)} \left| \left\langle {\dd}f, x    \right\rangle \right| \leq
\frac{[1 - \phi(x)]^2}{\psi(x)}  F^*(x,{\dd}f) \max\{ F(x,x), F(x,-x)\}\\
 & \leq \lambda_{\psi}  F^*(x,{\dd}f).
\end{align*}
Consequently, writing $F^*({\dd}f)$ for $F^*(x,{\dd}f)$, we have
\begin{align}
\int_{\Omega}  {|f|^{a-1}}{r^{s}} \left| \left\langle {\dd}f, \nabla r    \right\rangle \right| {\dd}x
& \leq  \lambda_{\psi} \int_{\Omega}  {|f|^{a-1}}{r^{s}}  F^*({\dd}f) {\dd}x.  \label{newFs>=2numer}
\end{align}

Let $\dii$ be the Euclidean divergence in $\mathbb{R}^n$.
By using \eqref{coreqd0xdx0}$_1$, \eqref{eqnabla_r} and the homogeneity of $\phi$ and $\psi$, we have
\begin{align}\label{newK=0_Laplace}
\Delta r & : =\dii\circ\nabla r=  \frac{\partial }{\partial x^i} \left[ \frac{[1-\phi(x)]^2}{\psi(x)} x^i \right]
=  \frac{[1-\phi(x)]^2}{\psi(x)} n  - \frac{2\phi(x)[1-\phi(x)]}{\psi(x)} - \frac{[1-\phi(x)]^2}{\psi(x)} \notag\\
& = \frac{[1-\phi(x)]^2}{\psi(x)} (n-1) - \frac{2 [1-\phi(x)]\phi(x)}{\psi(x)} = -\frac{2}{r} + \frac{(n+1)(1-\phi(x))}{ r}.
\end{align}

On the one hand, the divergence theorem together with   $\langle {\dd}r,\nabla r\rangle=1$ and \eqref{newFs>=2numer} implies
\begin{align}\label{newleftberwres}
&\quad \int_{\Omega} {|f|^{a}}{r^{s}}\Delta r {\dd}x + s \int_{\Omega} {|f|^{a}}{r^{s-1}} {\dd}x \notag\\
& =  - \int_{\Omega} \left\langle {\dd}\left( {|f|^{a}}{r^{s}}\right), \nabla r    \right\rangle{\dd}x + s \int_{\Omega} {|f|^{a}}{r^{s-1}} {\dd}x
 \leq a \int_{\Omega}  {|f|^{a-1}}{r^{s}}\left| \left\langle {\dd}f, \nabla r    \right\rangle \right| {\dd}x
\notag\\
&\leq a  \lambda_{\psi} \int_{\Omega}  {|f|^{a-1}}{r^{s}}  F^*({\dd}f) {\dd}x.
\end{align}
On the other hand, it follows from  \eqref{newK=0_Laplace} that
\begin{align}
 \quad \int_{\Omega} {|f|^{a}}{r^{s}} \Delta r {\dd}x  + s \int_{\Omega} {|f|^{a}}{r^{s-1}} {\dd}x
= (s-2) \int_{\Omega}  {|f|^{a}}{r^{s-1}}   {\dd}x  + (n+1) \int_{\Omega} {|f|^{a}} (1-\phi(x)) r^{s-1}  {\dd}x. \label{newCKNs>=2_secondpart}
\end{align}
Then \eqref{newleftberwres} and \eqref{newCKNs>=2_secondpart} imply
\[  \int_{\Omega}  {|f|^{a-1}}{r^{s}}  F^*({\dd}f) {\dd}x \geq \frac{s-2}{a  \lambda_{\psi}}  \int_{\Omega}  {|f|^{a}}{r^{s-1}}   {\dd}x  + \frac{n+1}{a  \lambda_{\psi}}\int_{\Omega} {|f|^{a}} (1-\phi(x)) r^{s-1}  {\dd}x, \]
which together with \eqref{granotwoff} (i.e. $F^*({\dd}f) = F(\nabla f)$) proves \eqref{firsteq}.
\end{proof}

As a direct consequence of Theorem \ref{main_thmgeneral1}, we now can give the proof of  Theorem \ref{thm:CKNHPW_threshold}\eqref{1.2(i)}.
\begin{proof}[Proof of Theorem \ref{thm:CKNHPW_threshold}\eqref{1.2(i)}]
By \eqref{granotwoff}, replace $F(\nabla f)$ by $F^*({\dd}f)$.
Applying the H\"older inequality to the left side of   \eqref{firsteq}, we get
\begin{align} \label{Thm1.2_Holder1}
\left( \int_{\Omega} r^{bps} F^{*p}({\dd}f) {\dd}x \right)^{1/{p}} \left( \int_{\Omega} |f|^{p'(a-1)} r^{(1-b)p's} {\dd}x  \right)^{1/{p'}}
\geq
\int_{\Omega} {|f|^{a-1}}{r^{s}}  F^*({\dd}f) {\dd}x,
\end{align}
which together with the right side of \eqref{firsteq} and $1-\phi(x)\geq 0$ gives
\begin{align*}
&\quad \left( \int_{\Omega} r^{bps} F^{*p}({\dd}f) {\dd}x \right)^{1/{p}} \left( \int_{\Omega} |f|^{p'(a-1)} r^{(1-b)p's} {\dd}x  \right)^{1/{p'}}
 \notag\\
&\geq   \frac{1}{a \lambda_\psi} \left[(s-2) \int_{\Omega}  {|f|^{a}}{r^{s-1}}   {\dd}x  + (n+1) \int_{\Omega} {|f|^{a}} (1-\phi(x)) r^{s-1} {\dd}x\right] \\
&\geq \frac{s-2}{a \lambda_\psi}   \int_{\Omega}  {|f|^{a}}{r^{s-1}}   {\dd}x.
\end{align*}
This completes the proof.
\end{proof}

\begin{corollary}\label{maincorollary}
Under the assumptions of Theorem~\ref{main_thmgeneral1}, with $\lambda_\psi=1$, for any $f\in C_0^\infty(\Omega)$,
\begin{align}\label{ineqmain1_reverpsi}
\int_\Omega |f|^{a-1} r^s F(\nabla f){\dd}x
\ge \frac{1}{a}\left[(s-2)\int_\Omega |f|^a r^{s-1}{\dd}x
+ (n+1)\int_\Omega |f|^a (1-\phi(x)) r^{s-1}{\dd}x\right].
\end{align}
The constant $1/a$ is sharp. Equality is approached by radial functions $f=f(r)\leq 0$ with $f'(r)\geq 0$ and $f\not\equiv 0$.
\end{corollary}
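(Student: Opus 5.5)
The inequality \eqref{ineqmain1_reverpsi} is \eqref{firsteq} of Theorem~\ref{main_thmgeneral1} with $\lambda_\psi=1$, so only sharpness and the near-equality claim need an argument. My plan is to go back through the proof of Theorem~\ref{main_thmgeneral1} and find the two places where an inequality was used. The first is $a\,|f|^{a-1}\langle \dd f,\nabla r\rangle \ge -\langle \dd(|f|^a),\nabla r\rangle$ in \eqref{newleftberwres}. The second is the duality bound $|\langle \dd f,\nabla r\rangle|\le \lambda_\psi F^*(\dd f)$, obtained from \eqref{absolute_dualff*} and Lemma~\ref{maxF}. I will then show that both become equalities for radial functions $f=f(r)\le 0$ with $f'\ge 0$.

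\textbf{Radial computation.} For $f=f(r)$ we have $\dd f=f'(r)\,\dd r$. Since $f'\ge0$ and $F^*$ is positively homogeneous, $F^*(\dd f)=f'F^*(\dd r)=f'$, using $F^*(\dd r)=1$. We also have $\langle \dd f,\nabla r\rangle=f'\langle\dd r,\nabla r\rangle=f'$. So $|\langle \dd f,\nabla r\rangle|=F^*(\dd f)$ exactly, and the duality step costs nothing. This holds for every $\lambda_\psi$, but only when $\lambda_\psi=1$ does it match the constant $\lambda_\psi$ in the theorem. For the divergence step, $f\le0$ gives $\dd(|f|^a)=-a|f|^{a-1}f'\,\dd r$. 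Hence $-\langle \dd(|f|^ar^s),\nabla r\rangle = a|f|^{a-1}r^s f' - s|f|^a r^{s-1}$, and the first inequality in \eqref{newleftberwres} is an identity. Combined with the identity \eqref{newCKNs>=2_secondpart}, this gives exact equality in \eqref{ineqmain1_reverpsi} for every such radial $f$, provided integration by parts is justified.

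\textbf{Main obstacle.} The hard part is that these radial functions are not in $C_0^\infty(\Omega)$. The distance function $r$ is only Lipschitz at $\mathbf 0$. Also, a radial $f$ with $f\le0$ and $f'\ge0$ cannot vanish near the backward boundary unless $f\to0$ as $r\to\infty$, which means $f$ increases to $0$; in the interior we take compactly supported truncations. I would handle this as follows.

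\begin{enumerate}[(a)]
\item Take radial Lipschitz functions $f$ supported in a ball $B_T(\mathbf 0)$ with $f(r)=0$ for $r\ge T$. An example is $f=-(T-r)_+^k$, which satisfies $f\le0$ and $f'\ge0$. Verify the divergence theorem for $|f|^ar^s\nabla r$ directly in polar coordinates using \eqref{volumeexprsso}. The boundary term at $\mathbf 0$ vanishes because $s>1-n$ and $\hat\sigma_{\mathbf 0}\sim r^{n-1}$ by \eqref{limcaes}, so $r^{s+n-1}\to0$.
\item Approximate $f$ by smooth functions $f_k$ using Lemma~\ref{lipsconverppax}, which gives uniform convergence with bounded $F^*(\dd f_k)$ on a fixed compact set. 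Pass to the limit on both sides, using dominated convergence and the integrability of $r^{s-1}$ near $\mathbf 0$, again guaranteed by $s>1-n$.
\end{enumerate}

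Then the ratio of the left side of \eqref{ineqmain1_reverpsi} to the bracket tends to $1/a$ along $f_k$. If $1/a$ were replaced by any $c>1/a$, taking $f_k$ would give a contradiction. Hence $1/a$ is sharp, and equality is approached by the radial functions described.
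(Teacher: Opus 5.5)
Your argument is correct, and its core is exactly the paper's: for radial $f\le 0$ with $f'\ge 0$, both the integration-by-parts step and the duality step $|\langle \dd f,\nabla r\rangle|\le F^*(\dd f)$ become identities. The difference is in how you handle regularity, and the obstacle you build the second half around is not a real one.

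The paper simply takes $f_R=-\chi(R+1-r)$, with $\chi$ the smooth step function of \eqref{chi}. This works for three reasons:
\begin{enumerate}
\item[(i)] $f_R$ equals $-1$ on $B_R(\mathbf 0)$, so the non-smoothness of $r$ at the origin never enters.
\item[(ii)] $r=\psi/(1-\phi)$ is smooth on $\Omega\setminus\{\mathbf 0\}$.
\item[(iii)] $\{r\le R+1\}$ is compact by forward completeness.
\end{enumerate}
Thus $f_R\in C_0^\infty(\Omega)$ with $f_R\le0$ and $f_R'\ge0$, and it gives exact equality in \eqref{ineqmain1_reverpsi}. So $1/a$ is attained, not merely approached, and no approximation is needed at all.

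Your route via $-(T-r)_+^k$ and Lemma~\ref{lipsconverppax} also works. The limit passage is fine: $F^*(\dd f_k)$ is uniformly bounded on a fixed compact set and converges in $L^p$, hence a.e.\ along a subsequence, and $r^s$, $r^{s-1}$ are locally integrable for $s>1-n$. But it costs two verifications the paper avoids. First, you need the divergence theorem for a Lipschitz field with a point singularity. Second, you need $-(T-r)_+^k\in\Lip_0(\Omega)$ in the paper's one-sided sense $|f(x_1)-f(x_2)|\le C\,d_F(x_1,x_2)$, which requires a small compactness argument because $\lambda_F=+\infty$. The only gain is that the equality identity is certified for profiles not flat at the origin, which sharpness does not require.
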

\begin{proof}
Since the  generating Minkowski norm $\psi$ is reversible, we have $\lambda_\psi=1$. Therefore, Theorem \ref{main_thmgeneral1} with $\lambda_\psi=1$ yields exactly the inequality \eqref{ineqmain1_reverpsi}. Thus the inequality itself is already proved.

It remains to show that equality can be achieved. Let $f=f(r)\in C_0^\infty(\Omega)$ be a radial function satisfying
\begin{equation}\label{Coro1.2fcondi}
f(r)\leq 0,\qquad f'(r) \geq 0,\qquad \forall\, r\ge0.
\end{equation}
We claim that such an $f$ attains equality in \eqref{ineqmain1_reverpsi}. Indeed, since $f$ is radial, we have $\langle {\dd}f,\nabla r\rangle = f'(r)$, and because $F^*({\dd}r)=1$ (which follows from the fact that $r$ is the distance function) and $F^*$ is positively 1-homogeneous, we get
\[
F^*({\dd}f) =F^*(f'(r) {\dd}r) =f'(r) F^*({\dd}r) =f'(r).
\]
Also, $|\langle {\dd}f,\nabla r\rangle|=f'(r)$. Hence the estimate
\[
\left|\left\langle {\dd}f,\nabla r\right\rangle\right| \le F^*({\dd}f)
\]
in \eqref{newFs>=2numer} becomes an equality (with $\lambda_\psi=1$), i.e., $\left|\left\langle {\dd}f,\nabla r\right\rangle\right| = F^*({\dd}f)$ and hence
\begin{align*}
\int_{\Omega}  {|f|^{a-1}}{r^{s}} \left| \left\langle {\dd}f, \nabla r    \right\rangle \right| {\dd}x
=  \int_{\Omega}  {|f|^{a-1}}{r^{s}}  F^*({\dd}f) {\dd}x.
\end{align*}
 Consequently, all inequalities in the proof of Theorem \ref{main_thmgeneral1} become equalities. In particular, the divergence theorem computation yields
\[
\int_{\Omega} {|f|^{a}}{r^{s}}\Delta r {\dd}x + s \int_{\Omega} {|f|^{a}}{r^{s-1}} {\dd}x  = -\int_{\Omega} \left\langle {\dd}\bigl(|f|^a r^s\bigr),\nabla r\right\rangle {\dd}x  + s \int_{\Omega} {|f|^{a}}{r^{s-1}} {\dd}x
= a\int_{\Omega} |f|^{a-1} r^s F^*({\dd}f) {\dd}x,
\]
which, upon using the formula for $\Delta r$ in \eqref{newK=0_Laplace}, gives exactly the equality
\[
\int_{\Omega} |f|^{a-1} r^s F^*({\dd}f){\dd}x
= \frac{1}{a} \left[ (s-2) \int_{\Omega} |f|^a r^{s-1}\,{\dd}x
+ (n+1)\int_{\Omega} |f|^a (1-\phi(x)) r^{s-1}\,{\dd}x \right].
\]
Since $F^*({\dd}f)=F(\nabla f)$ by the duality relation \eqref{granotwoff}, this is precisely \eqref{ineqmain1_reverpsi} with equality.

Thus the constant $1/a$ is sharp and is approached by the family of functions satisfying \eqref{Coro1.2fcondi}. This completes the proof of the corollary.
\end{proof}

The above proof shows that equality in \eqref{ineqmain1_reverpsi} is attained whenever a function \(f\in C_0^\infty(\Omega)\) satisfies
\eqref{Coro1.2fcondi}.
We now exhibit such a nontrivial function.

Let \(\chi\in C^\infty(\mathbb{R}) \) be the standard cut-off function defined by
\begin{equation}\label{chi}
\chi(t)=
\begin{cases}
0, & t\le 0,\\[4pt]
\dfrac{e^{-1/t}}{e^{-1/t}+e^{-1/(1-t)}}, & 0<t<1,\\[8pt]
1, & t\ge 1,
\end{cases}
\end{equation}
so that \(0\le\chi\le1\) and all derivatives of \(\chi\) vanish at \(0\) and \(1\). For any \(R>0\), define
\[
\varphi_R(r)=\chi(R+1-r),\qquad r\ge0.
\]
Then \(\varphi_R\in C^\infty([0,+\infty))\), \(\varphi_R(r)=1\) for \(0\le r\le R\), \(\varphi_R(r)=0\) for \(r\ge R+1\), and \(\varphi_R'(r)\le0\) for all \(r\).

Now set
\[
f_R(r)=-\varphi_R(r).
\]
Clearly \(f_R\in C_0^\infty(\Omega)\), \(f_R\le0\), and
\[
f_R'(r)=-\varphi_R'(r)\ge0.
\]
Thus \(f_R\) satisfies the above equality condition, and since \(f_R\not\equiv0\), it provides a nonzero admissible function for which equality in \eqref{ineqmain1_reverpsi} holds.
(Equivalently, one could take a truncated version of \(-e^{-r}\), but the above construction is simpler and directly verifies the required properties.)

\subsection{Sharp constants for the generalized CKN inequality}\label{sec:sharp_CKN}

We next prove that \( \frac{1}{a} \) remains the optimal constant for the inequality derived by applying H\"older's inequality (Theorem~\ref{sbeerHolder}) and then   Theorem~\ref{thm:CKNHPW_sharp}\eqref{1.3(i)} can be proved.

The proof of Theorem \ref{sbeerHolder} is organized as follows. After applying H\"older's inequality and constructing the approximating functions \(u_{\kappa,R}\), we split the integrals over four intervals. The main proof establishes the required limit, deferring the technical estimates on \(I_0,I_1,I_2\) to Lemmas \ref{lem:I0_estimates}, \ref{lem:I1_estimates}, and \ref{lem:I2_estimates}, which are stated and proved at the end of this subsection.

\begin{theorem}\label{sbeerHolder}
	Let $(\Omega, F, \mathscr{L}^n)$ be a forward complete but not backward complete projectively flat Finsler metric manifold with $\mathbf{K}=0$ and  the generating Minkowski norm $\psi$ reversible ($\lambda_\psi=1$), endowed with the Lebesgue measure.
	Let $1<p<n$, $a > p$, $s>2$, and $0 \leq b < 1$ satisfying
\begin{align}\label{condition_b}
b< \frac{p+s-1}{p s} - \frac{(a-p)(p+s-3)}{aps}.
\end{align}
Then for any $f \in C^\infty_0(\Omega)$, there holds
\begin{align}\label{sbeerHolder_ineq}
&\left( \int_{\Omega} r^{bps} F^{p}(\nabla f) {\dd}x \right)^{1/{p}} \left( \int_{\Omega} |f|^{p'(a-1)} r^{(1-b)p's} {\dd}x  \right)^{1/{p'}}
\geq \notag\\
&\qquad \qquad \frac{1}{a} \left[(s-2) \int_{\Omega}  {|f|^{a}}{r^{s-1}}   {\dd}x  + (n+1) \int_{\Omega} {|f|^{a}} (1-\phi(x)) r^{s-1} {\dd}x\right],
\end{align}
where the constant $1/a$ is sharp.
\end{theorem}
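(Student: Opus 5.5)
The inequality \eqref{sbeerHolder_ineq} follows at once from the two facts already proved. H\"older's inequality gives \eqref{Thm1.2_Holder1}. Corollary~\ref{maincorollary} (Theorem~\ref{main_thmgeneral1} with $\lambda_\psi=1$) then bounds $\int_\Omega |f|^{a-1}r^sF^*({\dd}f)\,{\dd}x$ from below by the bracket on the right of \eqref{sbeerHolder_ineq}. So all the work is in sharpness.

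For sharpness we need a family $f$ for which both inequalities become equalities asymptotically. Equality in Corollary~\ref{maincorollary} holds exactly for radial $f\le 0$ with $f'\ge 0$. Equality in H\"older requires $r^{bps}(f')^p$ proportional to $|f|^{p'(a-1)}r^{(1-b)p's}$. The function $u_\kappa$ of Lemma~\ref{lemma_u^p_r^p} meets both requirements: by \eqref{threeintegral} the three integrals coincide, so H\"older is an identity for $u_\kappa$. However, $u_\kappa$ is not compactly supported, so we truncate it. Set $u_{\kappa,R}=\chi$-type smooth cut-offs of $u_\kappa$ that equal $u_\kappa$ (after subtracting a constant to keep the sign and monotonicity) on $[e^{-R},e^{R}]$. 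Concretely, take $u_{\kappa,R}(r)=\min\{u_\kappa(r)-u_\kappa(e^R),0\}$ suitably mollified near $r=e^R$, and cut off near $0$ so that the function stays radial, nonpositive and nondecreasing. Lemma~\ref{lipsconverppax} lets us work with Lipschitz truncations and approximate them by $C_0^\infty$ functions.

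Because $u_{\kappa,R}$ is radial, nonpositive and nondecreasing, Corollary~\ref{maincorollary} holds with equality for it. Hence the ratio (left side of \eqref{sbeerHolder_ineq}) over (bracket$/a$) equals
\[
\frac{\bigl(\int r^{bps}F^{*p}({\dd}u_{\kappa,R})\bigr)^{1/p}\bigl(\int|u_{\kappa,R}|^{p'(a-1)}r^{(1-b)p's}\bigr)^{1/p'}}{\int |u_{\kappa,R}|^{a-1}r^sF^*({\dd}u_{\kappa,R})},
\]
and it suffices to show this tends to $1$ as $R\to\infty$. We split each integral over the intervals $(0,e^{-R})$, $(e^{-R},e^R)$, $(e^R,e^R+1)$ and $(e^R+1,\infty)$; the last piece vanishes. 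On the middle interval the integrands differ from those of $u_\kappa$ only through the additive constant $u_\kappa(e^R)$. These are the terms $I_0,I_1,I_2$, to be estimated with Lemma~\ref{lem_hat_sigma}, namely \eqref{inte_ine} together with the lower bound \eqref{CKNintlowerboundR}. The contributions of the two outer intervals, measured relative to the total, tend to $0$ by \eqref{eqlemma_uiii} and \eqref{eqlemma_uiv} and by similar estimates on the transition layers. After these estimates all three integrals are $\bigl(1+o(1)\bigr)\int r^{bps}F^{*p}({\dd}u_\kappa)$, and the ratio tends to $1$.

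The main obstacle is controlling the additive shift $u_\kappa(e^R)$ in the $|u|^{p'(a-1)}$ and $|u|^{a-1}$ integrals. Using $|u_\kappa(e^R)|\lesssim e^{-R\frac{p+s-1-bps}{a-p}}$, I would expand $|u_\kappa-u_\kappa(e^R)|^{q}$ by the mean value theorem and bound the error by $\int |u_\kappa|^{q-1}|u_\kappa(e^R)|\,r^{\cdot}$ weighted with $r^{n-1}(r+\hat\varrho)^{-(n+1)}$. For large $r$ this weight behaves like $r^{-2}$, so the error integral up to $e^R$ grows at most like a power of $e^R$. I expect condition \eqref{condition_b}, which is stronger than \eqref{condition_b*} by replacing $s-2$ with $p+s-3$, to be exactly what makes this error $o\bigl(\kappa^{\frac{(1-a)p}{a-p}+1}R^{s-2}\bigr)$. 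This exponent bookkeeping is where I expect the difficulty to lie.
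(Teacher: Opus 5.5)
Your plan is sound, but it differs from the paper in one substantive place: the truncation at large $r$.

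\textbf{How the paper truncates.} The paper takes $u_{\kappa,R}=\eta_R u_\kappa(0)+(1-\eta_R)\varphi_R u_\kappa$, where $\varphi_R(r)=\chi(e^R+1-r)$ is a multiplicative cut-off of unit width. Hence $u_{\kappa,R}=u_\kappa$ exactly on $[e^{-R},e^R]$, so $A^{(1)}_R=\tilde A^{(1)}_R=V^{(1)}_R$, and Lemma~\ref{lem:I1_estimates} follows immediately from \eqref{eqlemma_uiii}--\eqref{eqlemma_uiv}. The cost is the layer term $u_\kappa\varphi_R'$ on $[e^R,e^R+1]$. It gives $A^{(2)}_R\lesssim e^{\tilde\theta R}$, where $\tilde\theta=p(\gamma+bs)-2$ and $\gamma=-\frac{p+s-1-bps}{a-p}$. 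The condition $\tilde\theta<0$ is precisely \eqref{condition_b}, and this is the only place where the paper uses that stronger hypothesis.

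\textbf{How your truncation compares.} Your $\min\{u_\kappa-u_\kappa(e^R),0\}$ has no such layer term, since its derivative is either $u_\kappa'$ or $0$. The cost moves instead to the shift in $\tilde A_R$ and $V_R$ on $[e^{-R},e^R]$. That cost is not the obstacle you expect. We have $c:=|u_\kappa(e^R)|\le \eta e^{\gamma R}$, and for $u_\kappa$ the three integrands share the large-$r$ exponent $\theta<-1$ from the proof of Lemma~\ref{lemma_u^p_r^p}. Consequently the shift errors are exponentially small in $R$; for $\tilde A_R$ one gets $O\bigl(e^{(\theta+1)R}+R^{C}e^{\gamma R}\bigr)$. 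This is negligible against the polynomial lower bound \eqref{CKNintlowerboundR}.

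\textbf{What each route buys.} Condition \eqref{condition_b} is therefore not what controls your error. Your route needs only \eqref{condition_b*}, which is equivalent to $\theta<-1$, i.e.\ to finiteness of the energy of $u_\kappa$. In that sense your route proves sharpness of $1/a$ under a weaker hypothesis than the paper, at the price of an easy shift estimate that the paper avoids.

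\textbf{Two technical corrections.}

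First, the bound $|u|^q-(|u|-c)^q\le q|u|^{q-1}c$ requires $q\ge1$. It is fine for $q=p'(a-1)>p$. In $V_R$, however, $q=a-1$, which is less than $1$ when $p<a<2$. There you should use $|u|^q-(|u|-c)^q\le c^q$ instead. Alternatively, use $A_R,\tilde A_R\le(1+o(1))A$ together with H\"older's inequality, and bound $V_R$ from below only on $[e^{-R},e^{R/2}]$, where $c/|u_\kappa|$ is exponentially small.

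Second, the smooth approximants from Lemma~\ref{lipsconverppax} need not be radial or monotone, so Corollary~\ref{maincorollary} cannot be applied to them. You have two options:
\begin{itemize}
\item verify the equality of Corollary~\ref{maincorollary} directly for your Lipschitz radial truncation (the integration by parts goes through), and then pass to the limit in the whole quotient, as in Lemma~\ref{lemma:Cpab_well};
\item or smooth the truncation as $m_\epsilon\bigl(u_\kappa-u_\kappa(e^R)\bigr)$, with $m_\epsilon$ smooth, nondecreasing and $0\le m_\epsilon'\le 1$. This function stays radial, nonpositive and nondecreasing, and its derivative is at most $u_\kappa'$.
\end{itemize}
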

\begin{proof}
The inequality \eqref{sbeerHolder_ineq} holds by the proof of Theorem \ref{thm:CKNHPW_threshold}\eqref{1.2(i)} with $\lambda_\psi=1$, where
we used \eqref{Thm1.2_Holder1}, i.e.,
\begin{align} \label{Thm1.2_Holder11}
\left( \int_{\Omega} r^{bps} F^{*p}({\dd}f) {\dd}x \right)^{1/{p}} \left( \int_{\Omega} |f|^{p'(a-1)} r^{(1-b)p's} {\dd}x  \right)^{1/{p'}}
\geq
\int_{\Omega} {|f|^{a-1}}{r^{s}}  F^*({\dd}f) {\dd}x.
\end{align}
Now the remainder is to prove the sharpness of the inequality, i.e. the constant $1/a$ is sharp.

By the standard equality condition for H\"older's inequality, the equality in  \eqref{Thm1.2_Holder11} holds if and only if
the two integrands of the integrals on the left side are proportional, i.e.,
\begin{equation} \label{thm1.2_HolderFunc}
r^{bps} F^{*p}({\dd}f) = c |f|^{p'(a-1)} r^{(1-b)p's},
\end{equation}
where $c$ is a constant.
Under the assumptions   $a > p$, $s>2$, and \eqref{condition_b}, we have the chain of inequalities
\begin{equation} \label{condition_b1_holder}
b < \frac{p+s-1}{p s} - \frac{(a-p)(p+s-3)}{aps} <  \frac{p+s-1}{p s} - \frac{(a-p) (s-2)}{a p s} <   \frac{p+s-1}{p s}.
\end{equation}
Now we choose a function as \eqref{func_h}, i.e.,
\begin{equation*}
u_{\kappa} = - \eta \bigl(\kappa +r^{\frac{p+s-1-bps}{p-1}}\bigr)^{\frac{1-p}{a-p}},
\end{equation*}
where
\begin{align} \label{Holder_R_new}
\eta :=  \left[ \frac{p+s-1-bps}{a-p} \right]^{\frac{p-1}{a-p}},\quad \kappa := R^{\frac{ p+s-1-bps}{p-1}} > 1,   \quad R> \max\{2, \hat{\varrho} \}.
\end{align}
It can be verified directly that $u_{\kappa} < 0$, $u'_{\kappa} > 0$.
A direct computation shows that $u_{\kappa}$ satisfies \eqref{thm1.2_HolderFunc} with $c=1$, i.e.,
\begin{equation} \label{h'_h_1}
r^{bps} (u'_{\kappa})^p =  |u_{\kappa}|^{(a-1)p'} r^{(1-b)p's}.
\end{equation}
Then, $u_{\kappa}$ achieves the equality of \eqref{Thm1.2_Holder1}.
Here, although the theorem can be proved for any positive constant \(\kappa\), for convenience in proving Theorem \ref{thm:CKNHPW_sharp}\eqref{1.3(i)} below, we choose \(\kappa\) as in \eqref{Holder_R_new}.
Although $u_{\kappa}$ is not compactly supported and not smooth at $r=0$, we still can construct a smooth
function with compact support by $u_{\kappa}$ and two cut-off functions as follows.

Let \(\chi\) be the standard cut-off function defined as above in \eqref{chi},
so that \(0\le\chi\le1\) and all derivatives of \(\chi\) vanish at \(0\) and \(1\). For any \(R>2\), define
\begin{align}\label{varphi_eta(r)}
\eta_R(r): =\chi(1- e^R r),\qquad r\ge0,
\end{align}
\begin{align}\label{varphi_R(r)}
\varphi_R(r): =\chi(e^R+1-r),\qquad r\ge0.
\end{align}

Then $\eta_R\in C^\infty([0,+\infty))$, $\eta_R(0)=1$, $\eta_R(r)=0$ for $r\geq e^{-R}$, and $\eta_R'(r)\le0$ for all $r$;
 $\varphi_R\in C^\infty([0,+\infty))$, $\varphi_R(r)=1$ for $0\le r\le e^R$, $\varphi_R(r)=0$ for $r\ge e^R+1$, and $\varphi_R'(r)\leq 0$ for all $r$.

Now set
\begin{equation} \label{h_R}
u_{\kappa, R}(r)= \eta_R(r)u_{\kappa}(0) + (1-\eta_R(r)) \varphi_R(r) u_{\kappa}(r).
\end{equation}
This ensures that $ u_{\kappa, R}\in C_0^\infty(\Omega)$, $u_{\kappa, R}\leq 0$, and
\begin{align*}
	u_{\kappa,R}'(r)= \eta_R'(r) \bigl(u_\kappa(0)-\varphi_R(r)u_\kappa(r)\bigr) +(1-\eta_R(r))\varphi_R'(r)u_\kappa(r)
	+(1-\eta_R(r))\varphi_R(r)u_\kappa'(r).
\end{align*}
Because $\eta_R'\le0$, $\varphi_R'\le0,\ u_{\kappa}\le0$ and $\eta_R\ge0$, $\varphi_R\ge0$, we have $u_{\kappa,R}' \ge0$.
Moreover, a direct computation gives the $i$-th derivative vanishes at $r=0$, i.e.,
\[
u^{(i)}_{\kappa,R}(0)=0,\qquad i\in\mathbb{N}^+,
\]
which remedies the possible non-smoothness of $u_{\kappa}$ at $r=0$.
Then $u_{\kappa, R}$ satisfies \eqref{Coro1.2fcondi} in Corollary \ref{maincorollary}, and hence
$u_{\kappa, R}$ achieves the equality of \eqref{ineqmain1_reverpsi}, i.e.,
\begin{align*}
\int_\Omega |u_{\kappa, R}|^{a-1} r^s F(\nabla u_{\kappa, R}){\dd}x
= \frac{1}{a}\left[(s-2)\int_\Omega |u_{\kappa, R}|^a r^{s-1}{\dd}x
+ (n+1)\int_\Omega |u_{\kappa, R}|^a (1-\phi(x)) r^{s-1}{\dd}x\right].
\end{align*}
Therefore, to prove that $1/a$ is the sharp constant in \eqref{sbeerHolder_ineq}, it suffices to show
\begin{equation}\label{Q_R}
\lim_{R\to +\infty}\frac{(A_R)^{1/p}(\tilde A_R)^{1/p'}}{V_R}=1,
\end{equation}
where
\[
A_R:=\int_\Omega r^{bps}F^{*p}({\dd}u_{\kappa,R})\,{\dd}x,\quad
\tilde A_R:=\int_\Omega |u_{\kappa,R}|^{p'(a-1)}r^{(1-b)p's}\,{\dd}x,\quad
V_R:=\int_\Omega |u_{\kappa,R}|^{a-1}r^sF^*({\dd}u_{\kappa,R})\,{\dd}x.
\]

Split $[0,+\infty)$ into four subintervals:
\begin{equation}\label{CKNinterval}
I_0=[0,e^{-R}],\qquad
I_1=[e^{-R},e^R],\qquad
I_2=[e^R,e^R+1],\qquad
I_3=[e^R+1,+\infty).
\end{equation}
For simplicity, for $i=0, 1, 2, 3$, define
\begin{equation} \label{u_kappaRi}
u_{\kappa, R, i} := u_{\kappa, R}|_{r\in I_i}, \quad u'_{\kappa, R, i} := u'_{\kappa, R}|_{r\in I_i},
\end{equation}
\[
A^{(i)}_R: = \int_{\Omega} |u'_{\kappa, R, i}|^{p}\, r^{bps} {\dd}x,\qquad
\tilde{A}^{(i)}_R: = \int_{\Omega}   |u_{\kappa, R, i}|^{(a-1)p'}\,r^{(1-b)p' s}\,{\dd}x,\qquad
V^{(i)}_R: = \int_{\Omega} { |u_{\kappa, R, i}|^{a-1}}{r^{s}} |u'_{\kappa, R, i}| {\dd}x.
\]
On $I_3$, $u_{\kappa,R}\equiv0$, so the corresponding integrals vanish.
Then
\begin{equation} \label{AAV}
A_R =A^{(0)}_R + A^{(1)}_R + A^{(2)}_R, \quad \tilde{A}_R = \tilde{A}^{(0)}_R + \tilde{A}^{(1)}_R + \tilde{A}^{(2)}_R, \quad V_R =  V^{(0)}_R + V^{(1)}_R + V^{(2)}_R.
\end{equation}
For these integrals, we have the following estimates, whose proofs are given in Lemmas \ref{lem:I0_estimates}, \ref{lem:I1_estimates}, and \ref{lem:I2_estimates} below:
\[
\lim_{R\to +\infty} \frac{ A^{(0)}_R}{A} =\lim_{R\to +\infty}\frac{ \tilde A^{(0)}_R}{A}= \lim_{R\to +\infty} \frac{V^{(0)}_R}{A}=0,\qquad
\lim_{R\to +\infty} \frac{A^{(1)}_R}{A} =\lim_{R\to +\infty} \frac{\tilde A^{(1)}_R}{A} =\lim_{R\to +\infty}  \frac{V^{(1)}_R}{A} = 1,
\]
\[
\lim_{R\to +\infty} \frac{A^{(2)}_R}{A} = \lim_{R\to +\infty} \frac{\tilde A^{(2)}_R}{A} =\lim_{R\to +\infty} \frac{V^{(2)}_R}{A} = 0,
\]
where $A:=\int_\Omega r^{bps}|u_\kappa'|^p{\dd}x$ is finite and positive for fixed $R$ as shown in Lemma \ref{lemma_u^p_r^p}. Combining these estimates and \eqref{AAV} yields
\[
\lim_{R\to +\infty}  \frac{A_R}{A} =  \lim_{R\to +\infty}  \frac{\tilde A_R}{A} = \lim_{R\to +\infty} \frac{V_R}{A} = 1,
\]
and consequently
\[
\lim_{R\to +\infty}\frac{(A_R)^{1/p}(\tilde A_R)^{1/p'}}{V_R}=1.
\]
This proves the required limit and hence the sharpness of $1/a$.

For readability, the three lemmas containing the interval estimates are stated at the end of this subsection.
\end{proof}

We now prove Theorem \ref{thm:CKNHPW_sharp}\eqref{1.3(i)} based on the method and function used in the above theorem.

\begin{proof}[Proof of Theorem \ref{thm:CKNHPW_sharp}\eqref{1.3(i)}]
Note that \eqref{condition_bthm1.4} is stronger than \eqref{condition_b*}, since $p+s-3 > s-2$.
Since the assumptions are the same as those in Theorem \ref{sbeerHolder} and $1-\phi(x) >0$, the inequality \eqref{CKNsharp} holds naturally by
\eqref{sbeerHolder_ineq}.
Hence, we only need to show $(s-2)/a$ is the sharp constant.
To prove it, we still use the same function $u_{\kappa}$ and the two cut-off functions $\eta_R$, $\varphi_R$ to compose $u_{\kappa, R}$ as in \eqref{h_R}.
By the proof of Theorem \ref{sbeerHolder}, the equation \eqref{Q_R}  holds for ${u}_{\kappa, R}(r)$.
By this definition, since $|u_{\kappa}(0)| = \eta \kappa^{\frac{1-p}{a-p}}  \geq |u_{\kappa}(r)|$,  the following estimate holds:
\begin{equation}\label{CKNubound}
|u_{\kappa, R}| =
\begin{cases}
|\eta_R(r)u_{\kappa}(0)| + |(1-\eta_R(r)) u_{\kappa}(r)| \leq \eta \kappa^{\frac{1-p}{a-p}}, &\qquad \text{if } r\in [0, e^{-R}], \\[3pt]
|u_{\kappa}(r)| = \eta \bigl(\kappa +r^{\frac{p+s-1-bps}{p-1}}\bigr)^{\frac{1-p}{a-p}}, &\qquad \text{if } r\in [e^{-R}, e^R],\\[3pt]
|\varphi_R(r) u_{\kappa}(r)| \leq |u_{\kappa}(r)| = \eta \bigl(\kappa +r^{\frac{p+s-1-bps}{p-1}}\bigr)^{\frac{1-p}{a-p}}, &\qquad \text{if } r\in [e^R, e^{R}+1], \\[3pt]
0, &\qquad \text{if } r\in [e^{R}+1, +\infty).
\end{cases}
\end{equation}
Thus it remains to show
\begin{equation}\label{CKNsharptarget}
\lim_{R \rightarrow +\infty}  \frac{ \int_{\Omega} {| {u}_{\kappa, R}|^{a}} (1-\phi(x)) r^{s-1}\, {\dd}x}{ \int_{\Omega}  {| {u}_{\kappa, R}|^{a}} r^{s-1}\,  {\dd}x  }
=0.
\end{equation}
Now we divide the proof into two steps to estimate the two integrals in the target limit \eqref{CKNsharptarget}.

\medskip
\noindent
\paragraph{\emph{Step 1: Numerator estimate.}}
The inequality  \eqref{phi_rho_minmax} implies
\[ \frac{\hat{\varrho}}{\hat{\varrho} +r} \leq  1-\phi(x) \leq   \frac{\bar{\varrho}}{\bar{\varrho} +r}.  \]
Combining with  \eqref{inte_ine}, it follows that
\begin{align}
& \quad \int_{\Omega} {| {u}_{\kappa, R}|^{a}} (1-\phi(x)) r^{s-1}\, {\dd}x \notag \\
& \leq {\bar{\varrho}}^{n+2}  \mathscr{I}_{\m}(\mathbf{0})
\int^{+\infty}_0  {| {u}_{\kappa, R}|^{a}} r^{s+ n-2} (r+\bar{\varrho})^{-(n+2)} {\dd}r
\notag\\
& = {\bar{\varrho}}^{n+2}  \mathscr{I}_{\m}(\mathbf{0})
\left\{ \int^{1}_0 + \int^{R}_1 + \int^{e^R +1}_R   {| {u}_{\kappa, R}|^{a}} r^{s+ n-2} (r+\bar{\varrho})^{-(n+2)} {\dd}r  \right\}.
\label{CKN_key0}
\end{align}
The estimate \eqref{CKNubound} yields that $|u_{\kappa, R}| \leq \eta \kappa^{\frac{1-p}{a-p}}$ for $r\in [0, 1]$, and  $|u_{\kappa, R}| \leq  \eta \bigl(\kappa +r^{\frac{p+s-1-bps}{p-1}}\bigr)^{\frac{1-p}{a-p}}$ for $r\in [1, e^R +1]$. Then
\begin{align}
\int^{1}_0 {| {u}_{\kappa, R}|^{a}} r^{s+ n-2} (r+\bar{\varrho})^{-(n+2)} {\dd}r & \leq \bar{\varrho}^{-(n+2)} \eta^a \kappa^{\frac{(1-p)a}{a-p}}, \label{CKNsharp_1st_1}\\
\int^{R}_1 {| {u}_{\kappa, R}|^{a}} r^{s+ n-2} (r+\bar{\varrho})^{-(n+2)} {\dd}r & \leq  \eta^a \kappa^{\frac{(1-p)a}{a-p}} \int^{R}_1 r^{s-4} {\dd}r. \label{CKNsharp_2nd_1}
\end{align}
For $r\in [R, e^R +1]$, since $ | u_{\kappa, R}| \leq  \eta r^{ -\frac{p+s-1-bps}{a-p}}$, we have
\begin{align}
\int^{e^R+1}_R {| {u}_{\kappa, R}|^{a}} r^{s+ n-2} (r+\bar{\varrho})^{-(n+2)} {\dd}r & \leq  \eta^a \int^{e^R+1}_R  r^{-\frac{(p+s-1-bps)a}{a-p} + s-4} {\dd}r. \label{CKNsharp_3rd}
\end{align}
By a direct computation, the condition on $b$ \eqref{condition_b} is equivalent to
\begin{align*}
 \frac{(p+s-1 - bps)a}{a-p} -  (p+s-3) > 0,
\end{align*}
which together with $p>1$ yields
\[
- \frac{(p+s-1 - bps)a}{a-p} + s-4 < -2.
\]
Then it follows from \eqref{CKNsharp_3rd} and $\kappa^{\frac{(1-p)a}{a-p}} = R^{-\frac{(p+s-1-bps)a}{a-p}}$ that
\begin{align}
&\quad \int^{e^R+1}_R {| {u}_{\kappa, R}|^{a}} r^{s+ n-2} (r+\bar{\varrho})^{-(n+2)} {\dd}r \notag\\
&
\leq  \eta^a \left( \frac{(p+s-1-bps)a}{a-p} - s+3 \right)^{-1} \left[ R^{-\frac{(p+s-1-bps)a}{a-p} + s-3} - \left(e^R+1\right)^{-\frac{(p+s-1-bps)a}{a-p} + s-3}\right] \notag\\
& \leq \eta^a \left( \frac{(p+s-1-bps)a}{a-p} - s+3 \right)^{-1} \kappa^{\frac{(1-p)a}{a-p}} R^{s-3}.
 \label{CKNsharp_3rd_1}
\end{align}
Thus,
combining \eqref{CKNsharp_1st_1}, \eqref{CKNsharp_2nd_1}, and \eqref{CKNsharp_3rd_1}, the inequality \eqref{CKN_key0} implies
\begin{align}
& \quad \int_{\Omega} {| {u}_{\kappa, R}|^{a}} (1-\phi(x)) r^{s-1}\, {\dd}x  \notag \\
& \leq  {\bar{\varrho}}^{n+2}  \mathscr{I}_{\m}(\mathbf{0}) \eta^a \kappa^{\frac{(1-p)a}{a-p}}
\left\{ \bar{\varrho}^{-(n+2)} + \int^{R}_1 r^{s-4} {\dd}r +  \left( \frac{(p+s-1-bps)a}{a-p} - s+3 \right)^{-1} R^{s-3}  \right\}. \label{CKNsharpnumer}
\end{align}

\medskip
\noindent
\paragraph{\emph{Step 2: Denominator estimate.}}
Combining with  \eqref{inte_ine}, it follows that
\begin{align}
 \int_{\Omega}  {| {u}_{\kappa, R}|^{a}} r^{s-1}\,  {\dd}x
&\geq {\hat{\varrho}}^{n+1} \mathscr{I}_{\m}(\mathbf{0}) \int^{+\infty}_0  {| {u}_{\kappa, R}|^{a}} r^{s+n-2} (r+\hat{\varrho})^{-(n+1)} \, {\dd}r
\notag\\
& \geq {\hat{\varrho}}^{n+1} \mathscr{I}_{\m}(\mathbf{0}) \int^{R}_1 {| {u}_{\kappa, R}|^{a}} r^{s+n-2} (r+\hat{\varrho})^{-(n+1)} \, {\dd}r.
\label{CKNsharpdenomi}
\end{align}
Since ${u}_{\kappa, R} = u_{\kappa}$ on $[1, R] \in I_1$ as defined in \eqref{CKNinterval}, and $\kappa = R^{\frac{ p+s-1-bps}{p-1}}$,  we have
\[ | {u}_{\kappa, R}| = |u_{\kappa}| \geq \eta (\kappa + \kappa)^{\frac{1-p}{a-p}} = \eta  (2\kappa)^{\frac{1-p}{a-p}}, \quad \forall r\in [1, R]. \]
Then together with the above estimate  and $R> \max\{2, \hat{\varrho} \}$, it follows from \eqref{CKNsharpdenomi} that
\begin{align}
\int_{\Omega}  {| {u}_{\kappa, R}|^{a}} r^{s-1}\,  {\dd}x &\geq {\hat{\varrho}}^{n+1} \mathscr{I}_{\m}(\mathbf{0})\, \eta^a (2 \kappa)^{\frac{(1-p)a}{a-p}} (2R)^{-(n+1)} \int_1^{R} r^{s+n-2} \,{\dd}r
\notag\\
& =(s+n-1)^{-1} {\hat{\varrho}}^{n+1} \mathscr{I}_{\m}(\mathbf{0})\, \eta^a (2 \kappa)^{\frac{(1-p)a}{a-p}} (2R)^{-(n+1)} \left( R^{s+n-1} -1
\right) \notag\\
& \geq 2^{\frac{(1-p)a}{a-p}-(n+2)} (s+n-1)^{-1}{\hat{\varrho}}^{n+1} \mathscr{I}_{\m}(\mathbf{0}) \,\eta^a \kappa^{ \frac{(1-p)a}{a-p} }  R^{s-2}.
\label{CKN_key2}
\end{align}

Since  $s>2$, a direct computation shows
\[
\lim_{R \rightarrow +\infty} \frac{  \bar{\varrho}^{-(n+2)} + \int^{R}_1 r^{s-4} {\dd}r +  \left( \frac{(p+s-1-bps)a}{a-p} - s+3 \right)^{-1} R^{s-3} }{ R^{s-2} }
=0.
\]
Thus, together with \eqref{CKNsharpnumer} and \eqref{CKN_key2}, it  yields
\begin{align*}
0 &\leq \lim_{R \rightarrow +\infty}  \frac{ \int_{\Omega} {| {u}_{\kappa, R}|^{a}} (1-\phi(x)) r^{s-1}\, {\dd}x}{ \int_{\Omega}  {| {u}_{\kappa, R}|^{a}} r^{s-1}\,  {\dd}x  } \\
& \leq \lim_{R \rightarrow +\infty}
\left[ \frac{\bar{\varrho}^{n+2}(s+n-1) }{2^{\frac{(1-p)a}{a-p}-(n+2)} {\hat{\varrho}}^{n+1}} \right] \frac{   \bar{\varrho}^{-(n+2)} + \int^{R}_1 r^{s-4} {\dd}r +  \left( \frac{(p+s-1-bps)a}{a-p} - s+3 \right)^{-1} R^{s-3} }{ R^{s-2} }
=0.
\end{align*}
This completes the proof.
\end{proof}

Now we prove the three lemmas containing the interval estimates.
\begin{lemma}\label{lem:I0_estimates}
Assume the hypotheses of Theorem \ref{sbeerHolder}.
Let $u_{\kappa,R}$,  $A^{(0)}_R$, $\tilde A^{(0)}_R$, $V^{(0)}_R$, and $I_0=[0,e^{-R}]$ be defined in the proof of Theorem \ref{sbeerHolder}.
Then
\[
\lim_{R\to +\infty} \frac{ A^{(0)}_R}{A} =\lim_{R\to +\infty}\frac{ \tilde A^{(0)}_R}{A}= \lim_{R\to +\infty} \frac{V^{(0)}_R}{A}=0.
\]
\end{lemma}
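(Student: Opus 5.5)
Throughout, $A$ denotes $\int_\Omega r^{bps}|u_\kappa'|^p\,{\dd}x$. The plan is to bound each of $A^{(0)}_R,\tilde A^{(0)}_R,V^{(0)}_R$ above by a quantity of the form $C\,\mathscr{I}_{\m}(\mathbf{0})\,\kappa^{\frac{(1-a)p}{a-p}}\,e^{-\delta R}$ for some $\delta>0$. Dividing by the lower bound \eqref{CKNintlowerboundR} of Lemma~\ref{lemma_u^p_r^p}\eqref{lemma_uii} then gives
\[
\frac{A^{(0)}_R}{A}\le C'\,\kappa^{-1}R^{2-s}e^{-\delta R}\to 0 ,
\]
and likewise for the other two. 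Since $\kappa\ge1$ and $s>2$, this suffices. All integrals over $I_0=[0,e^{-R}]$ are radial, so by \eqref{inte_ine} each is at most $\mathscr{I}_{\m}(\mathbf{0})\bar\varrho^{n+1}\int_0^{e^{-R}}(\cdots)\,r^{n-1}(r+\bar\varrho)^{-(n+1)}\,{\dd}r$. This in turn is at most $\mathscr{I}_{\m}(\mathbf{0})\int_0^{e^{-R}}(\cdots)\,r^{n-1}\,{\dd}r$.

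First I need pointwise bounds on $I_0$. From \eqref{CKNubound} we have $|u_{\kappa,R}|\le |u_\kappa(0)|=\eta\kappa^{\frac{1-p}{a-p}}$. For the derivative, I write
\[
u'_{\kappa,R}=\eta_R'\,(u_\kappa(0)-u_\kappa)+(1-\eta_R)\,u_\kappa',
\]
using $\varphi_R\equiv1$ on $I_0$. The second term is controlled by \eqref{u'_kappa}: it is at most $\eta^{\frac{a-1}{p-1}}\kappa^{\frac{1-a}{a-p}}r^{\frac{s-bps}{p-1}}$. For the first term, $|\eta_R'|\le C e^R$. The mean value theorem gives $|u_\kappa(0)-u_\kappa(r)|\le r\sup_{[0,r]}u_\kappa'\le C\kappa^{\frac{1-a}{a-p}}r^{1+\frac{s-bps}{p-1}}$. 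Since $e^R r\le1$ on $I_0$, the first term is also at most $C\kappa^{\frac{1-a}{a-p}}r^{\frac{s-bps}{p-1}}$. Here I use that $s-bps>0$, which follows from $b<1$ and $s>0$, so $u_\kappa'$ is increasing near $0$; if necessary I instead bound the supremum by its value at $r$ times a constant. Thus on $I_0$,
\[
|u'_{\kappa,R}|\le C\kappa^{\frac{1-a}{a-p}}\,r^{\frac{s-bps}{p-1}} .
\]

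Next I integrate. For $A^{(0)}_R$ the integrand is at most $C\kappa^{\frac{(1-a)p}{a-p}}r^{\frac{(1-b)ps}{p-1}+n-1}$. The exponent is greater than $-1$, so the integral over $[0,e^{-R}]$ is $O\bigl(\kappa^{\frac{(1-a)p}{a-p}}e^{-R(\frac{(1-b)ps}{p-1}+n)}\bigr)$. For $\tilde A^{(0)}_R$ the integrand is at most $C\kappa^{\frac{(1-p)(a-1)p'}{a-p}}r^{(1-b)p's+n-1}$. Here $\frac{(1-p)(a-1)p'}{a-p}=\frac{-(a-1)p}{a-p}$, which is the same power of $\kappa$. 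Its integral is $O(e^{-R((1-b)p's+n)})$, using $(1-b)p's+n>0$. For $V^{(0)}_R$ the integrand is $\kappa^{\frac{(1-p)(a-1)}{a-p}+\frac{1-a}{a-p}}r^{s+\frac{s-bps}{p-1}+n-1}$. The $\kappa$-exponent equals $\frac{(1-a)p}{a-p}$ again, and the $r$-exponent is again $\frac{(1-b)ps}{p-1}+n-1>-1$. In each case this gives the claimed form with $\delta=\min\{n,\,(1-b)p's+n\}>0$.

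The main obstacle is matching the $\kappa$-powers exactly, so that the dependence on $\kappa$ cancels against \eqref{CKNintlowerboundR}. This matching has just been checked. In addition, the bound on the cut-off term $\eta_R'(u_\kappa(0)-u_\kappa)$ must not lose a factor $e^R$. This is precisely why I use the mean value estimate together with $r\le e^{-R}$. Everything else is routine bookkeeping of exponents.
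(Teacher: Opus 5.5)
Your proof is correct in outline and handles the cut-off term differently from the paper. One justification, the claim that $s-bps>0$, is false and needs a short repair.

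\medskip

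\noindent\textbf{Comparison with the paper.} The two arguments differ in how they treat $\eta_R'(u_\kappa(0)-u_\kappa)$.
\begin{itemize}
\item The paper bounds $|u_\kappa(0)-u_\kappa(r)|$ crudely by $|u_\kappa(0)|=\eta\kappa^{\frac{1-p}{a-p}}$. This term then contributes $e^{pR}\kappa^{\frac{(1-a)p}{a-p}+p}$ to $|u'_{\kappa,R}|^p$.
\item The factor $e^{pR}$ is paid for by $\int_0^{e^{-R}}r^{bps+n-1}\,\mathrm{d}r$. This is where the paper uses $p<n$, and the net ratio to $A$ is of order $e^{-(bps+n-p)R}\kappa^{p-1}R^{2-s}$.
\item The paper's text then weakens this to $e^{-bpsR}\kappa^{p-1}R^{2-s}=e^{-bpsR}R^{p+1-bps}$. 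That quantity does not tend to $0$ when $b=0$, so the factor $e^{-(n-p)R}$ has to be kept.
\item For $V^{(0)}_R$ the paper instead uses $e^Rr^s\le 1$.
\end{itemize}
You absorb the factor $e^R$ pointwise through the increment estimate. This gives the single bound $|u'_{\kappa,R}|\lesssim \kappa^{\frac{1-a}{a-p}}r^{\beta-1}$ on $I_0$, where $\beta:=\frac{p+s-1-bps}{p-1}$. After that, all three integrands are $\lesssim \kappa^{\frac{(1-a)p}{a-p}}r^{(1-b)p's+n-1}$, and your exponent bookkeeping is correct. You never need $p<n$, and the decay $e^{-((1-b)p's+n)R}$ is uniform in $b$. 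This is the more robust argument.

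\medskip

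\noindent\textbf{The false step.} You claim that $s-bps>0$ follows from $b<1$ and $s>0$. But $s-bps=s(1-bp)$, so you need $b<1/p$.
\begin{itemize}
\item Condition \eqref{condition_b} is equivalent to $b<\frac{2a+p(p+s-3)}{aps}$. This bound exceeds $1/p$ whenever $a(s-2)<p(p+s-3)$.
\item For example, $p=2$, $s=3$, $a=2.1$ allows $b=0.6$, and then $\beta-1=-0.6$.
\item In that regime $u'_\kappa(t)\sim t^{\beta-1}\to+\infty$ as $t\to0^+$. So $\sup_{(0,r]}u'_\kappa=+\infty$.
\item Hence both your mean value step and your fallback (``bound the supremum by its value at $r$'') fail as stated.
\end{itemize}

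\medskip

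\noindent\textbf{Repair.} Integrate instead of taking a supremum. Since $\beta>0$, which is exactly what $b<\frac{p+s-1}{ps}$ guarantees,
\[
|u_\kappa(0)-u_\kappa(r)|=\int_0^r u'_\kappa(t)\,\mathrm{d}t\le \eta^{\frac{a-1}{p-1}}\kappa^{\frac{1-a}{a-p}}\int_0^r t^{\beta-1}\,\mathrm{d}t=\frac{\eta^{\frac{a-1}{p-1}}}{\beta}\,\kappa^{\frac{1-a}{a-p}}\,r\cdot r^{\beta-1}.
\]
Combined with $|\eta_R'|\le Ce^R$ and $e^Rr\le 1$ on $I_0$, this gives exactly your pointwise bound $|u'_{\kappa,R}|\le C\kappa^{\frac{1-a}{a-p}}r^{\beta-1}$ for every admissible $b$. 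With this change the rest of your argument goes through unchanged.
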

\begin{proof}
On $I_0$, $u_{\kappa,R}=\eta_R(r)u_\kappa(0)+(1-\eta_R(r))u_\kappa(r)$. Since $\eta_R$ is smooth with $\eta_R'\le0$ and $|\eta_R'|\le C_1 e^R$ for some constant $C_1>0$, and $|u_\kappa(0)-u_\kappa(r)|\leq |u_\kappa(0)| \leq \eta \kappa^{\frac{1-p}{a-p}}$, we have
\begin{align}\label{CKNu'esti}
|u'_{\kappa,R}|
\leq |\eta_R'(r)|\, |u_\kappa(0)-u_\kappa(r)| +|1-\eta_R(r)| \, |u_\kappa'(r)|  \leq \eta C_1  e^R \kappa^{\frac{1-p}{a-p}} + |u_\kappa'(r)|,
\end{align}
which together with  the convexity of $t\mapsto t^p$ yields
\[ |u'_{\kappa,R}|^p \leq  2^{p-1} \left( \eta^p C_1^p e^{pR} \kappa^{\frac{(1-p)p}{a-p}} + |u_\kappa'(r)|^p \right). \]
Then
combining with $e^{-R} < 1$ and \eqref{u'_kappa_r>1}, it follows that
\begin{align*}
|u'_{\kappa,R}|^p  r^{bps+n-1}(r+\bar\varrho)^{-(n+1)}
 & \leq 2^{p-1} \eta^p C_1^p e^{pR} \kappa^{\frac{(1-p)p}{a-p}} r^{bps+n-1}(r+\bar\varrho)^{-(n+1)} + 2^{p-1} \bar{\varrho}^{-(n+1)} \eta^{\frac{(a-1)p}{p-1}}   \kappa^{\frac{(1-a)p}{a-p}} \\
 & \leq 2^{p-1} \bar\varrho^{-(n+1)} \eta^p C_1^p e^{pR} \kappa^{\frac{(1-p)p}{a-p}} r^{bps+n-1} + 2^{p-1} \bar{\varrho}^{-(n+1)} \eta^{\frac{(a-1)p}{p-1}}   \kappa^{\frac{(1-a)p}{a-p}} \\
 & = 2^{p-1} \bar\varrho^{-(n+1)} \kappa^{\frac{(1-a)p}{a-p}} \left( \eta^p C_1^p e^{pR} r^{bps+n-1} \kappa^{p} + \eta^{\frac{(a-1)p}{p-1}}  \right).
\end{align*}
Since $r \in I_0 = [0, e^{-R}]$, $bps+n-1 >0$, and $p<n$, we have
\begin{align*}
|u'_{\kappa,R}|^p  r^{bps+n-1}(r+\bar\varrho)^{-(n+1)}
& \leq 2^{p-1} \bar\varrho^{-(n+1)} \kappa^{\frac{(1-a)p}{a-p}} \left( \eta^p C_1^p e^{pR} e^{-(bps+n-1)R} \kappa^{p} + \eta^{\frac{(a-1)p}{p-1}}   \right) \\
& \leq 2^{p-1} \bar\varrho^{-(n+1)} \kappa^{\frac{(1-a)p}{a-p}} \left( \eta^p C_1^p e^{-(bps-1)R} \kappa^{p} + \eta^{\frac{(a-1)p}{p-1}}\right).
\end{align*}
Now combining the volume estimate \eqref{inte_ine}, it follows that
\begin{align*}
A^{(0)}_R
&= \int_{S_{\mathbf{0}}\Omega}\int_0^{e^{-R} } |u'_{\kappa,R}|^p r^{bps}\hat\sigma_{\mathbf{0}}(r,y){\dd} r{\dd}\nu_\mathbf{0}(y) \\
& \leq \bar\varrho^{n+1} \mathscr{I}_{\m}(\mathbf{0})  \int_0^{e^{-R}}  |u'_{\kappa,R}|^p r^{bps+n-1}(r+\bar\varrho)^{-(n+1)}{\dd}r \\
& \leq  2^{p-1} \mathscr{I}_{\m}(\mathbf{0}) \kappa^{\frac{(1-a)p}{a-p}} \left( \eta^p C_1^p e^{-(bps-1)R} \kappa^{p} + \eta^{\frac{(a-1)p}{p-1}}\right) e^{-R} \\
& = 2^{p-1} \mathscr{I}_{\m}(\mathbf{0}) \kappa^{\frac{(1-a)p}{a-p}}  \left( \eta^p C_1^p e^{-bpsR} \kappa^{p} + \eta^{\frac{(a-1)p}{p-1}} e^{-R} \right).
\end{align*}
Since  the exponential term decays faster than any positive power of \(R\),    together with $bps \geq 0$, $\kappa = R^{\frac{p+s-1-bps}{p-1}}$, and \eqref{CKNintlowerboundR}, it follows \[\lim_{R\to +\infty} \frac{A^{(0)}_R}{A} = 0.\]

Similarly, by \eqref{u'_kappa_r>1} and
\begin{equation}\label{CKN_I0_u_kappaR_esti}
|u_{\kappa,R}| \leq  \eta_R(r) |u_\kappa(0)|+ (1-\eta_R(r))|u_\kappa(0)|= \eta \kappa^{(1-p)/(a-p)},
\end{equation}
we have
\begin{align*}
	\tilde{A}^{(0)}_R & = \int_{S_{\mathbf{0}}\Omega}\left( \int^{e^{-R}}_0  |u_{\kappa, R}|^{p'(a-1)} r^{(1-b)p's} \hat{\sigma}_{\mathbf{0}}(r,y)\ {\dd}r \right){\dd}\nu_{\mathbf{0}}(y) \\
	& \leq   {\bar{\varrho}}^{n+1} \eta^{p'(a-1)} \mathscr{I}_{\m}(\mathbf{0})  \kappa^{\frac{(1-a)p}{a-p}}   \int^{e^{-R}}_0  r^{(1-b)p's + n-1} (r+\bar{\varrho})^{-(n+1)} {\dd}r  \\
	& \leq \eta^{p'(a-1)} \mathscr{I}_{\m}(\mathbf{0}) \kappa^{\frac{(1-a)p}{a-p}}  e^{- [(1-b)p's + n] R}.
\end{align*}
Since $(1-b)p's+n>0$, the exponent in the above estimates are negative, so together with \eqref{CKNintlowerboundR}, it yields
\[ \lim_{R\to +\infty} \frac{\tilde A^{(0)}_R}{A} = 0. \]

Now we estimate $V^{(0)}_R$. Since $ 0 \leq r\leq e^{-R} <1$, we have
\[ r^s  |u'_{\kappa}| =  \eta^{\frac{a-1}{p-1}} \bigl(\kappa +r^{\frac{p+s-1-bps}{p-1}}\bigr)^{\frac{1-a}{a-p}} r^{\frac{s p(1-b)}{p-1} } \leq
\eta^{\frac{a-1}{p-1}} \kappa^{\frac{1-a}{a-p}}. \]
It follows from \eqref{CKNu'esti} and  $a>p$ that
\[
r^s |u'_{\kappa, R}|  \leq \eta\, C_1  e^R \kappa^{\frac{1-p}{a-p}}{r^{s}} + \eta^{\frac{a-1}{p-1}} \kappa^{\frac{1-a}{a-p}}
\leq \left(\eta C_1  e^R \kappa r^{s}  + \eta^{\frac{a-1}{p-1}}\right) \kappa^{\frac{1-a}{a-p}},
\]
which, combined with \eqref{CKN_I0_u_kappaR_esti}, yields
\[
|u_{\kappa, R}|^{a-1} {r^{s}} |u'_{\kappa, R}| \leq \left(\eta^a C_1  e^R  \kappa r^{s}  + \eta^{\frac{(a-1)p}{p-1}}\right) \kappa^{\frac{(1-a)p}{a-p}} \leq  \left(\eta^a C_1   + \eta^{\frac{(a-1)p}{p-1}}\right) \kappa^{\frac{(1-a)p}{a-p} +1},
 \]
where the last inequality follows from $ 0 \leq r\leq e^{-R}$, $s>2$, and $\kappa > 1$.
Then it follows from \eqref{inte_ine} that
\begin{align*}
	V^{(0)}_R & = \int_{S_{\mathbf{0}}\Omega}\left( \int^{e^{-R}}_0   |u_{\kappa, R}|^{a-1} {r^{s}} |u'_{\kappa, R}| \hat{\sigma}_{\mathbf{0}}(r,y)\ {\dd}r \right){\dd}\nu_{\mathbf{0}}(y) \\
	& \leq {\bar{\varrho}}^{n+1}  \mathscr{I}_{\m}(\mathbf{0}) \left(\eta^a C_1  + \eta^{\frac{(a-1)p}{p-1}}\right) \kappa^{\frac{(1-a)p}{a-p}+ 1}
 \int^{e^{-R}}_0 r^{ n-1} (r+\bar{\varrho})^{-(n+1)} \, {\dd}r  \\
	& \leq   \mathscr{I}_{\m}(\mathbf{0}) \left(\eta^a C_1  + \eta^{\frac{(a-1)p}{p-1}}\right) \kappa^{\frac{(1-a)p}{a-p}+1}
 \int^{e^{-R}}_0 r^{ n-1}   \, {\dd}r \\
	& \leq \mathscr{I}_{\m}(\mathbf{0}) \left(\eta^a C_1  + \eta^{\frac{(a-1)p}{p-1}}\right) \kappa^{\frac{(1-a)p}{a-p}+1} e^{-n R}.
\end{align*}
So together with \eqref{CKNintlowerboundR}, it  yields
\[ \lim_{R\to +\infty} \frac{V^{(0)}_R}{A} = 0. \]
This completes the proof.
\end{proof}

\begin{lemma}\label{lem:I1_estimates}
Assume the hypotheses of Theorem \ref{sbeerHolder}.
Let $u_{\kappa,R}$, $A$, $A^{(1)}_R$, $\tilde A^{(1)}_R$, $V^{(1)}_R$, and $I_1=[e^{-R}, e^R]$ be defined in the proof of Theorem \ref{sbeerHolder}.
Then
\begin{equation} \label{A/A1/V}
\lim_{R\rightarrow +\infty} \frac{ A^{(1)}_R}{A} = \lim_{R\rightarrow +\infty} \frac{\tilde{A}^{(1)}_R}{A} = \lim_{R\rightarrow +\infty} \frac{V^{(1)}_R}{A} = 1.
\end{equation}
\end{lemma}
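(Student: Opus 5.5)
On $I_1=[e^{-R},e^R]$ both cut-offs are inactive: $\eta_R\equiv0$ and $\varphi_R\equiv1$. Hence $u_{\kappa,R}=u_\kappa$ and $u'_{\kappa,R}=u'_\kappa\ge0$ there. By the pointwise identity \eqref{h'_h_1} and $F^{*}(\dd u_\kappa)=u'_\kappa$, the three integrands
\[
r^{bps}|u'_{\kappa}|^p,\qquad |u_\kappa|^{p'(a-1)}r^{(1-b)p's},\qquad |u_\kappa|^{a-1}r^s|u'_\kappa|
\]
coincide pointwise on $I_1$ (this is the computation behind \eqref{threeintegral}). Therefore $A^{(1)}_R=\tilde A^{(1)}_R=V^{(1)}_R$, and it suffices to prove $A^{(1)}_R/A\to1$.

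To do this, write
\[
A=\int_\Omega r^{bps}(u'_\kappa)^p\,\dd x
=\int_{B_{e^{-R}}(\mathbf 0)}+\int_{B_{e^{R}}(\mathbf 0)\setminus B_{e^{-R}}(\mathbf 0)}+\int_{\Omega\setminus B_{e^{R}}(\mathbf 0)},
\]
where $r=d_F(\mathbf 0,x)$ and the middle term is exactly $A^{(1)}_R$. Then
\[
1-\frac{A^{(1)}_R}{A}=\frac{\int_{B_{e^{-R}}(\mathbf 0)} r^{bps}F^{*p}(\dd u_\kappa)\,\dd x}{A}+\frac{\int_{\Omega\setminus B_{e^{R}}(\mathbf 0)} r^{bps}F^{*p}(\dd u_\kappa)\,\dd x}{A}.
\]
Both quotients tend to $0$ as $R\to+\infty$ by Lemma~\ref{lemma_u^p_r^p}\eqref{lemma_uiii} and \eqref{lemma_uiv}. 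Here $\kappa=R^{(p+s-1-bps)/(p-1)}$ depends on $R$ exactly as in that lemma, and \eqref{condition_b} implies \eqref{condition_b*}, as noted in \eqref{condition_b1_holder}. Finiteness and positivity of $A$ come from Lemma~\ref{lemma_u^p_r^p}\eqref{lemma_ui} and \eqref{lemma_uii}. This yields \eqref{A/A1/V}.

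The only point requiring care is the bookkeeping behind this reduction. One must check that the interval $I_1$ in the radial variable corresponds exactly to the annulus $B_{e^R}(\mathbf 0)\setminus B_{e^{-R}}(\mathbf 0)$ of forward balls, which holds because $r$ is the forward distance from $\mathbf 0$. One must also check that the tail estimates of Lemma~\ref{lemma_u^p_r^p} were proved with the same $R$-dependent $\kappa$, so that they apply uniformly along the family. These checks are where the argument could slip, but they are direct verifications rather than new estimates.
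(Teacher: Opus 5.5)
Your proposal is correct and follows the paper's proof: on $I_1$ you have $u_{\kappa,R}=u_\kappa$, so the pointwise identity gives $A^{(1)}_R=\tilde A^{(1)}_R=V^{(1)}_R$. Writing $A-A^{(1)}_R$ as the integrals over $B_{e^{-R}}(\mathbf{0})$ and $\Omega\setminus B_{e^{R}}(\mathbf{0})$, Lemma~\ref{lemma_u^p_r^p}\eqref{lemma_uiii} and \eqref{lemma_uiv} make both ratios to $A$ tend to $0$; this applies because \eqref{condition_b} is stronger than \eqref{condition_b*}.
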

\begin{proof}
On $I_1$, $u_{\kappa,R}=u_\kappa$, so $u'_{\kappa,R}=u'_\kappa$. Hence
\begin{align*}
A - A^{(1)}_R & = \int_{S_{\mathbf{0}}\Omega}\left( \int^{+\infty}_{e^R} r^{bps} |u'_{\kappa}|^p \hat{\sigma}_{\mathbf{0}}(r,y)\ {\dd}r \right){\dd}\nu_{\mathbf{0}}(y)
+ \int_{S_{\mathbf{0}}\Omega}\left( \int^{e^{-R} }_0 r^{bps} |u'_{\kappa}|^p \hat{\sigma}_{\mathbf{0}}(r,y)\ {\dd}r \right){\dd}\nu_{\mathbf{0}}(y) \\
& =\int_{\Omega \setminus B_{e^{R}}(\mathbf{0})} r^{bps} F^{*p}({\dd}u_\kappa)\, {\dd}x + \int_{B_{e^{-R}}(\mathbf{0})} r^{bps} F^{*p}({\dd}u_\kappa)\, {\dd}x,
\end{align*}
which together with \eqref{eqlemma_uiii} and \eqref{eqlemma_uiv} in Lemma \ref{lemma_u^p_r^p} yields
\begin{align*}
\lim_{R\rightarrow +\infty} \frac{  A - A^{(1)}_R }{A} =0 \Rightarrow \lim_{R\rightarrow +\infty}\frac{ A^{(1)}_R}{A} =1.
\end{align*}
Since $A^{(1)}_R  = \tilde{A}^{(1)}_R = V^{(1)}_R$, it yields \eqref{A/A1/V}.
\end{proof}

\begin{lemma}\label{lem:I2_estimates}
Assume the hypotheses of Theorem \ref{sbeerHolder}.
Let $u_{\kappa,R}$, $A$, $A^{(2)}_R$, $\tilde A^{(2)}_R$, $V^{(2)}_R$, and $I_2=[e^R,e^R+1]$ be defined in the proof of Theorem \ref{sbeerHolder}.
Then
\[
\lim_{R\to +\infty} \frac{A^{(2)}_R}{A} = \lim_{R\to +\infty} \frac{\tilde A^{(2)}_R}{A} =\lim_{R\to +\infty} \frac{V^{(2)}_R}{A} = 0.
\]
\end{lemma}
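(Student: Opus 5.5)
On $I_2=[e^R,e^R+1]$ we have $\eta_R\equiv 0$, so $u_{\kappa,R}=\varphi_R u_\kappa$ and
\[
u'_{\kappa,R}=\varphi_R' u_\kappa+\varphi_R u_\kappa' .
\]
The plan is to bound each integrand on $I_2$ by explicit powers of $r$, integrate over a radial shell of width $1$ using the upper bound in \eqref{inte_ine}, and compare with the lower bound \eqref{CKNintlowerboundR} for $A$. First, $|\varphi_R'|\le C_1$ for a constant $C_1$ independent of $R$, since $\varphi_R(r)=\chi(e^R+1-r)$ is a translate of $\chi$. Together with $0\le\varphi_R\le 1$ this gives
\[
|u'_{\kappa,R}|\le C_1|u_\kappa|+u'_\kappa ,\qquad |u_{\kappa,R}|\le |u_\kappa| .
\]
For $r\ge e^R>R$ we have $r^{\frac{p+s-1-bps}{p-1}}\ge\kappa$. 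Hence, using \eqref{u'_kappa},
\[
|u_\kappa|\le \eta\, r^{-\beta},\quad \beta:=\frac{p+s-1-bps}{a-p}>0,\qquad
u'_\kappa\le \eta^{\frac{a-1}{p-1}} r^{\frac{s-bps}{p-1}-\frac{(a-1)(p+s-1-bps)}{(p-1)(a-p)}} .
\]

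The next step is to bound the three integrals. By \eqref{inte_ine}, each of $A^{(2)}_R,\tilde A^{(2)}_R,V^{(2)}_R$ is at most $\mathscr{I}_{\m}(\mathbf{0})\bar\varrho^{n+1}\int_{e^R}^{e^R+1}(\cdots)\,r^{n-1}(r+\bar\varrho)^{-(n+1)}\,{\dd}r$, and $r^{n-1}(r+\bar\varrho)^{-(n+1)}\le r^{-2}$. Since the interval has length $1$ and $r\sim e^R$ on it, each integral is bounded by $C e^{\gamma R}$, where $\gamma$ is the largest exponent of $r$ among the integrands after multiplying by $r^{-2}$. So $A^{(2)}_R,\tilde A^{(2)}_R,V^{(2)}_R=O(e^{\gamma R})$ for an explicit $\gamma$.

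For the derivative part this is straightforward. The $u'_\kappa$ contribution to $A^{(2)}_R$ is exactly the integrand already handled in \eqref{u'_kappa_r>1}, with exponent $\theta<-1$ from \eqref{u'_kappa_r>1negative}; alternatively it is dominated by the tail integral in Lemma~\ref{lemma_u^p_r^p}\eqref{lemma_uiii}, which already tends to $0$ relative to $A$. For the $C_1|u_\kappa|$ part, the identity \eqref{h'_h_1} lets me trade $|u_\kappa|^p r^{bps}$ for $r^{bps-\beta p}$, and the exponent after the factor $r^{-2}$ is $bps-\beta p-2$. For $\tilde A^{(2)}_R$ the exponent is $-\beta(a-1)p'+(1-b)p's-2$, which equals $\theta$ by \eqref{h'_h_1}. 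For $V^{(2)}_R$ I would use Young's inequality to bound
\[
|u|^{a-1}r^s|u'|\le \tfrac1p r^{bps}|u'|^p+\tfrac1{p'}|u|^{p'(a-1)}r^{(1-b)p's},
\]
which reduces $V^{(2)}_R$ to the first two cases.

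The main obstacle is showing the term $C_1^p|u_\kappa|^p r^{bps}$ has a sufficiently negative exponent: I must check $bps-\beta p-2<\theta$, or at least that $e^{(bps-\beta p-1)R}$ is $o$ of the lower bound $\kappa^{\frac{(1-a)p}{a-p}+1}R^{s-2}$ in \eqref{CKNintlowerboundR}. That lower bound is only polynomial in $R$, so it suffices that $bps-\beta p-1<0$. Writing it out, this is $bps(a-p)+bp^2s<(p+s-1)p+(a-p)$, i.e. $b<\frac{p+s-1}{ps}\cdot\frac{p}{a}+\frac{a-p}{aps}$. I would verify this follows from \eqref{condition_b} (here $a>p$, $s>2$ are needed), since \eqref{condition_b} reads $b<\frac{p+s-1}{ps}-\frac{(a-p)(p+s-3)}{aps}$. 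With that exponent negative, every quantity is $O(e^{-cR})$ with $c>0$, while $A$ is bounded below polynomially in $R$, so all three ratios tend to $0$.
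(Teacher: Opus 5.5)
\textbf{Verdict.} Your overall scheme is the paper's: on $I_2$ write $u_{\kappa,R}=\varphi_R u_\kappa$, bound the integrands by powers of $r$, integrate over a shell of width one via \eqref{inte_ine}, and compare with the polynomial lower bound \eqref{CKNintlowerboundR}. Your Young-inequality reduction $V^{(2)}_R\le \frac1p A^{(2)}_R+\frac1{p'}\tilde A^{(2)}_R$ is a valid shortcut for the third limit. However, the step you yourself single out as the main obstacle fails as stated.

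\textbf{The gap.} The condition you propose to verify, $bps-\beta p-1<0$, is equivalent to $b<\frac{a+p(p+s-2)}{aps}$. It does \emph{not} follow from \eqref{condition_b}. Rewritten, \eqref{condition_b} says $b<\frac{2a+p(p+s-3)}{aps}$, and this right-hand side exceeds yours by $\frac{a-p}{aps}>0$.

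A concrete counterexample: take $n=3$, $p=2$, $a=3$, $s=3$, $b=0.52$. Then \eqref{condition_b} holds, since $b<5/9$. Yet $\beta=0.88$ and $bps-\beta p-1=0.36>0$. A bound of order $e^{0.36R}$ cannot be $o$ of the lower bound, which here is of order $R^{-1.64}$.

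Your first alternative, $bps-\beta p-2<\theta$, is never available either. Since $u_\kappa'\lesssim r^{-\beta-1}$, one has $\theta=bps-\beta p-2-p$. So the $\varphi_R' u_\kappa$ part of $u'_{\kappa,R}$ always dominates the $u'_\kappa$ part by a factor $r^{p}$, and it is this term that governs $A^{(2)}_R$.

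\textbf{The repair.} Drop the spurious $+1$. As you observed earlier, $I_2$ has length one, so there is no gain from integration:
\[
\int_{e^R}^{e^R+1} r^{bps-\beta p-2}\,{\dd}r\le e^{(bps-\beta p-2)R}
\]
as soon as the exponent is nonpositive. The condition actually needed is $bps-\beta p-2<0$, and this is exactly equivalent to \eqref{condition_b}. With $\gamma=-\beta$ this is the paper's identity
\[
p(\gamma+bs)-2=\frac{aps}{a-p}\Bigl[b-\frac{p+s-1}{ps}+\frac{(a-p)(p+s-3)}{aps}\Bigr].
\]
So the hypothesis \eqref{condition_b} is tailored precisely to the cut-off term $\varphi_R' u_\kappa$. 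With this correction every piece is $O(e^{-cR})$ for some $c>0$, since the $\tilde A^{(2)}_R$ exponent is $\theta<-1$ as you say. Your argument then closes against the polynomial lower bound for $A$.
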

\begin{proof}
On $I_2$, $u_{\kappa,R}=u_\kappa\varphi_R$, where $\varphi_R(r)=\chi(e^R+1-r)$ satisfies $|\varphi_R'|\le C_2$ for some constant $C_2>0$. Let
\[
\gamma:=-\frac{p+s-1-bps}{a-p}.
\]
From the definition of $u_\kappa$ in \eqref{func_h}, we have for $r\ge e^R >1$,
\[
|u_\kappa(r)|\le \eta r^{\gamma},\qquad |u'_\kappa(r)|\le C_3 r^{\gamma-1},
\]
for some constant $C_3>0$. Hence, on $I_2$,
\[
|u_{\kappa,R}|\le \eta r^{\gamma},\qquad
|u'_{\kappa,R}|=|u'_\kappa\varphi_R+u_\kappa\varphi_R'|
\le (C_3+\eta C_2) r^{\gamma}=:\tilde C r^{\gamma}.
\]
Then, it follows from \eqref{inte_ine} that
\begin{align}
A^{(2)}_R & = \int_{B_{e^R+1}(\mathbf{0}) \backslash B_{e^R}(\mathbf{0}) } |u'_{\kappa, R}|^{p}\, r^{bps} {\dd}x \leq
\tilde{C}^p  \int_{B_{e^R+1}(\mathbf{0}) \backslash B_{e^R}(\mathbf{0}) } \, r^{p(\gamma+bs)} {\dd}x \notag\\
& \leq  {\bar{\varrho}}^{n+1} \tilde{C}^p  \mathscr{I}_{\m}(\mathbf{0})  \int^{e^R+1}_{e^R}   r^{p(\gamma+bs) + n-1} (r+\bar{\varrho})^{-(n+1)} {\dd}r \notag\\
& \leq {\bar{\varrho}}^{n+1} \tilde{C}^p  \mathscr{I}_{\m}(\mathbf{0})   \int^{e^R +1}_{e^R}    r^{p(\gamma+bs) -2}  {\dd}r.
  \label{CKNLemmaA2_bound}
\end{align}
Since $\gamma = -\frac{p+s-1-bps}{a-p}$, we have
\[ p(\gamma+bs) -2 =  \frac{aps}{a-p} \left[ b - \frac{p+s-1}{ps} + \frac{(a-p)(p+s-3)}{aps} \right], \]
which together with the condition on $b$ \eqref{condition_b} gives
\[ \tilde{\theta} := p(\gamma+bs) -2 <0. \]
Then it follows from  \eqref{CKNLemmaA2_bound} that
\[A^{(2)}_R \leq {\bar{\varrho}}^{n+1} \tilde{C}^p  \mathscr{I}_{\m}(\mathbf{0}) e^{\tilde{\theta} R},\]
which together with
\eqref{CKNLemmaA2_bound} and  \eqref{CKNintlowerboundR}  yields
\begin{align*}
\lim_{R\to +\infty} \frac{A^{(2)}_R}{A}  =0.
\end{align*}

For $\tilde{A}^{(2)}_R$ and $V^{(2)}_R$, an almost identical procedure gives
\begin{align}
\tilde{A}^{(2)}_R & = \int_{B_{e^R+1}(\mathbf{0}) \backslash B_{e^R}(\mathbf{0}) }   |u_{\kappa, R}|^{(a-1)p'}\,r^{(1-b)p' s}\,{\dd}x
\leq \eta^{(a-1)p'} \int_{B_{e^R+1}(\mathbf{0}) \backslash B_{e^R}(\mathbf{0}) }  r^{\gamma (a-1)p' + (1-b)p' s}\,{\dd}x  \notag\\
& \leq \eta^{(a-1)p'} {\bar{\varrho}}^{n+1} \mathscr{I}_{\m}(\mathbf{0}) \int^{e^R+1}_{e^R} r^{\gamma (a-1)p' + (1-b)p' s +n-1} (r+\bar{\varrho})^{-(n+1)} {\dd}r \notag\\
& \leq \eta^{(a-1)p'} {\bar{\varrho}}^{n+1} \mathscr{I}_{\m}(\mathbf{0}) \int^{e^R+1}_{e^R} r^{\gamma (a-1)p' + (1-b)p' s -2} {\dd}r,
\label{A(2)}
\end{align}
and
\begin{align}
	V^{(2)}_R & = \int_{B_{e^R+1}(\mathbf{0}) \backslash B_{e^R}(\mathbf{0}) }   |u_{\kappa, R}|^{a-1} \,r^{s} |u'_{\kappa, R, 2}| \,{\dd}x
	\leq \eta^{a-1} \tilde{C} \int_{B_{e^R+1}(\mathbf{0}) \backslash B_{e^R}(\mathbf{0}) }  r^{a\gamma + s}\,{\dd}x \notag\\
	& \leq {\bar{\varrho}}^{n+1} \eta^{a-1} \tilde{C}  \mathscr{I}_{\m}(\mathbf{0}) \int^{e^R+1}_{e^R}  r^{a\gamma + s + n-1} (r+\bar{\varrho})^{-(n+1)} {\dd}r \notag\\
	& \leq {\bar{\varrho}}^{n+1} \eta^{a-1} \tilde{C}  \mathscr{I}_{\m}(\mathbf{0}) \int^{e^R+1}_{e^R}  r^{a\gamma + s -2} {\dd}r.
\label{V(2)}
\end{align}

A direct simplification using \eqref{condition_b} shows that the exponents of
$r$ in the above integrals are negative. In fact
\begin{align*}
	\quad (a-1)p' \gamma + (1-b)p' s -2  &=  - \frac{(p+s-1-bps)(a-1)p}{(a-p)(p-1)} + \frac{(1-b)ps}{p-1} -2\\
& = p (\gamma + bs) - 2 - \frac{(p+s-1-bps)p}{p-1} + \frac{(1-bp)ps}{p-1}, \\
& = \tilde{\theta} - p < 0,
\end{align*}
\begin{align*}
	& a\gamma + s -2 = p (\gamma + bs) - 2 + (a-p)\gamma + s-bps = \tilde{\theta} - p+1 < 0.
\end{align*}
Hence, it follows from \eqref{A(2)} and \eqref{V(2)} that
\begin{align*}
\tilde{A}^{(2)}_R & \leq \eta^{(a-1)p'} {\bar{\varrho}}^{n+1} \mathscr{I}_{\m}(\mathbf{0})\,  e^{ [\gamma (a-1)p' + (1-b)p' s -2] R}, \\
V^{(2)}_R & \leq {\bar{\varrho}}^{n+1} \eta^{a-1} \tilde{C}  \mathscr{I}_{\m}(\mathbf{0}) \,e^{ R (a\gamma + s -2) }.
\end{align*}
These inequalities, together with   \eqref{CKNintlowerboundR} then yield
\[ \lim_{R\rightarrow +\infty} \frac{\tilde{A}^{(2)}_R}{A} =\lim_{R\rightarrow +\infty} \frac{V^{(2)}_R}{A}  =0. \]
This completes the proof.
\end{proof}

\vskip 10mm

\section{Sharp constant for the generalized HPW inequality} \label{SharpHPW}

This section is devoted to the sharpness of the generalized Heisenberg--Pauli--Weyl (HPW)  inequality. As noted in the introduction, the HPW inequality corresponds to the special case $a=p$ in the generalized CKN framework. While the generalized CKN setting ($a>p$) is governed by power-law extremals, the generalized HPW case is driven by exponential-type extremals. Our aim is to prove that the constant $(s-2)/p$ in the HPW-type inequality is sharp.

The proof follows the same truncation scheme as in Section \ref{sec:sharp}, with the power-law test function replaced by an exponential one. We first establish the integral properties of the exponential extremal.

\begin{lemma}\label{lemma_u_exp_HPW}
Let $(\Omega, F, \mathscr{L}^n)$ be a forward complete but not backward complete projectively flat Finsler metric manifold with $\mathbf{K}=0$ and the generating Minkowski norm $\psi$ reversible, endowed with the Lebesgue measure. Let  $p >1$, $s>2$, and $0 \leq b < 1$ satisfying
\begin{align}\label{condition_b*HPW}
b< \frac{p+s-1}{p s}.
\end{align}
Define
\begin{equation}\label{v_HPW}
 v_\kappa(r) := - e^{ -\frac{r^{\alpha+1}}{\kappa(\alpha+1) }},
\end{equation}
where
\[
\alpha: = \frac{s(1-bp)}{p-1},\quad \kappa = R^{\alpha+1}, \quad R>\max\{2, \hat{\varrho}\}.
\]
Then the following hold:
\begin{enumerate}[\rm (i)]
\item \label{exp_uiHPW}
\[
\int_\Omega r^{bps} F^{*p}({\dd}v_\kappa){\dd}x	\, < +\infty;
\]
\item \label{exp_u0HPW}
\begin{align} \label{threeintegralHPW}
 \int_{\Omega} r^{bps} F^{*p}({\dd}v_\kappa)\, {\dd}x  = \frac{1}{\kappa^p} \int_{\Omega} |v_\kappa|^{p} r^{(1-b)p's}\, {\dd}x
=
\frac{1}{\kappa^{p-1}} \int_{\Omega} {|v_{\kappa}|^{p-1}}{r^{s}}  F^*({\dd}v_\kappa)\, {\dd}x;
\end{align}
\item \label{exp_uiiHPW}
there exists some positive constant $C = C(b,n, p, s, \hat\varrho)$ such that
\begin{align}
 \int_{\Omega} r^{bps} F^{*p}({\dd}v_\kappa)\, {\dd}x
\geq  C  \hat\varrho^{n+1} \mathscr{I}_{\m}(\mathbf{0}) \kappa^{-p}
R^{p\alpha + bps -1}  > 0;   \label{exp_uiiHPW_ineq}
 \end{align}
\vskip 3mm
\item \label{exp_uiiiHPW}
\begin{equation} \label{eqlemma_uiiiHPW}
\lim_{R\to +\infty} \frac{\int_{\Omega \setminus B_{e^R}(\mathbf{0})} r^{bps} F^{*p}({\dd}v_\kappa) \, {\dd}x }{\int_{\Omega} r^{bps} F^{*p}({\dd}v_\kappa) \, {\dd}x  } =0;
\end{equation}
\item \label{exp_uivHPW}
\begin{equation} \label{eqlemma_uivHPW}
\lim_{R\to +\infty} \frac{\int_{B_{e^{-R}}(\mathbf{0})} r^{bps} F^{*p}({\dd}v_\kappa)\, {\dd}x  }{\int_{\Omega} r^{bps} F^{*p}({\dd}v_\kappa) \, {\dd}x  } =0.
 \end{equation}
\end{enumerate}
\end{lemma}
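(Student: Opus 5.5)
The proof will mirror Lemma~\ref{lemma_u^p_r^p}, with the power-law profile replaced by the Gaussian-type profile $v_\kappa$. The key preliminary computation is
\[
v_\kappa'(r)=\frac{r^{\alpha}}{\kappa}\,|v_\kappa(r)|>0,\qquad v_\kappa<0 .
\]
Since $v_\kappa$ is radial and $F^*({\dd}r)=1$, positive homogeneity of $F^*$ gives $F^*({\dd}v_\kappa)=v_\kappa'$. Hence every integral in the statement reduces, via \eqref{inte_ine} of Lemma~\ref{lem_hat_sigma}, to a one-dimensional integral against $r^{n-1}(r+\varrho)^{-(n+1)}$ with $\varrho\in\{\hat\varrho,\bar\varrho\}$.

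For \eqref{exp_u0HPW} I will verify the pointwise identities directly. We have
\[
r^{bps}(v_\kappa')^p=\kappa^{-p}r^{bps+p\alpha}|v_\kappa|^p ,
\]
and the choice $\alpha=s(1-bp)/(p-1)$ gives $bps+p\alpha=(1-b)p's$. Similarly
\[
|v_\kappa|^{p-1}r^s v_\kappa'=\kappa^{-1}r^{s+\alpha}|v_\kappa|^p ,
\]
and $s+\alpha=(1-b)p's-\alpha(p-1)+\alpha$. I will check the exponent bookkeeping carefully here, since the stated normalisations $\kappa^{-p}$ and $\kappa^{-(p-1)}$ must match exactly. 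With $p\alpha+bps=(1-b)p's$ and $s+\alpha=(1-b)p's-(p-1)\alpha+\alpha\cdot 0$, the target reduces to the identity $s+\alpha=bps+p\alpha$, i.e. $s(1-bp)=(p-1)\alpha$, which is exactly the definition of $\alpha$. Integrating the pointwise identities over $\Omega$ then gives \eqref{threeintegralHPW}.

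For \eqref{exp_uiHPW}: near $r=0$ the integrand is bounded by $C r^{(1-b)p's+n-1}$, which is integrable because $(1-b)p's+n>0$. At infinity the factor $e^{-pr^{\alpha+1}/(\kappa(\alpha+1))}$ decays super-polynomially, because $\alpha+1>0$, which follows from \eqref{condition_b*HPW}.

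For \eqref{exp_uiiHPW} I restrict the lower bound from \eqref{inte_ine} to $r\in[\hat\varrho,R]$. On this range $r^{\alpha+1}\le\kappa$, so $|v_\kappa|\ge e^{-1/(\alpha+1)}$, and $(r+\hat\varrho)^{-(n+1)}\ge(2r)^{-(n+1)}$. The integrand is then at least $c\,\kappa^{-p}r^{p\alpha+bps-2}$. Integrating, and using $p\alpha+bps-1>0$ (equivalent to $(1-b)p's>1$, which holds since $s>2$ and $b<1$), gives a bound of order $\kappa^{-p}(R^{p\alpha+bps-1}-\hat\varrho^{p\alpha+bps-1})$. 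Because $R>\max\{2,\hat\varrho\}$, this is bounded below by a multiple of $\kappa^{-p}R^{p\alpha+bps-1}$.

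The tail estimates \eqref{exp_uiiiHPW} and \eqref{exp_uivHPW} are where I expect the main delicacy: the lower bound decays like $\kappa^{-p}=R^{-p(\alpha+1)}$, so the upper bounds must be beaten by more than polynomial factors.

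For \eqref{exp_uivHPW} I use $|v_\kappa|\le1$ to get
\[
\int_{B_{e^{-R}}}\le \bar\varrho^{n+1}\mathscr I_{\m}(\mathbf 0)\,\kappa^{-p}\int_0^{e^{-R}} r^{(1-b)p's+n-1}\bar\varrho^{-(n+1)}{\dd}r\lesssim \kappa^{-p}e^{-((1-b)p's+n)R}.
\]
The ratio against the lower bound is then $\lesssim e^{-cR}R^{-(p\alpha+bps-1)}\to0$.

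For \eqref{exp_uiiiHPW}, on $r\ge e^R$ I bound $(r+\bar\varrho)^{-(n+1)}\le r^{-(n+1)}$, which leaves
\[
\kappa^{-p}\int_{e^R}^\infty r^{(1-b)p's-2}e^{-pr^{\alpha+1}/(\kappa(\alpha+1))}{\dd}r .
\]
I then substitute $t=r/R$, so that $r^{\alpha+1}/\kappa=t^{\alpha+1}$. This yields $\kappa^{-p}R^{(1-b)p's-1}\int_{e^R/R}^\infty t^{(1-b)p's-2}e^{-pt^{\alpha+1}/(\alpha+1)}{\dd}t$. The $t$-integral tends to $0$ as $R\to\infty$ because $e^R/R\to\infty$, while the prefactor has exactly the order of the lower bound, since $(1-b)p's-1=p\alpha+bps-1$. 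So the ratio tends to $0$, and this rescaling is the step that makes the $\kappa=R^{\alpha+1}$ normalisation work.
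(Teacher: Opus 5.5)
Your proposal is correct in substance and follows the paper's scheme. The shared steps are: the reduction $F^*(\dd v_\kappa)=v_\kappa'=\kappa^{-1}r^{\alpha}|v_\kappa|$, the pointwise identities for (ii), the two-sided volume bound \eqref{inte_ine}, and the rescaling $t=r/R$ that turns $r^{\alpha+1}/\kappa$ into $t^{\alpha+1}$ in the outer tail. In (iv) your bound $(r+\bar\varrho)^{-(n+1)}\le r^{-(n+1)}$ is lighter than the paper's $\le e^{-(n+1)R}$, but it suffices, because the prefactor $\kappa^{-p}R^{(1-b)p's-1}$ matches the lower bound exactly. Your (v) is the same as the paper's.

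The real difference is in (iii).
\begin{itemize}
\item The paper rescales over all of $(0,\infty)$, using $(tR+\hat\varrho)^{-(n+1)}\ge R^{-(n+1)}(t+\hat\varrho)^{-(n+1)}$ for $R\ge 1$. This gives one constant valid for every admissible $R$, and needs only $(1-b)p's+n>0$ for convergence at $t=0$.
\item Your truncation to $[\hat\varrho,R]$ produces $R^{\gamma}-\hat\varrho^{\gamma}$ with $\gamma=p\alpha+bps-1$. This is not bounded below by $cR^{\gamma}$ uniformly when $\hat\varrho\ge 2$ and $R\downarrow\hat\varrho$, so the constant you claim is not uniform as stated.
\item A simple repair is to integrate over $[R/2,R]$ instead. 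There $r+\hat\varrho<3r$ and $|v_\kappa|\ge e^{-1/(\alpha+1)}$, and $\int_{R/2}^{R}r^{\gamma-1}\,\dd r=c_\gamma R^{\gamma}$ with $c_\gamma>0$. This is uniform in $R$ and does not even require $\gamma>0$. Alternatively, note that (iv) and (v) only need large $R$.
\end{itemize}

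Two smaller slips should be fixed.
\begin{itemize}
\item $\gamma>0$, i.e. $(1-b)p's>1$, does not follow from $s>2$ and $b<1$ alone. For example, $p=2$, $s=3$, $b=0.95$ gives $(1-b)p's=0.3$. It follows from the hypothesis $b<\frac{p+s-1}{ps}$, which gives $(1-b)ps>(p-1)(s-1)$, hence $(1-b)p's>s-1>1$.
\item In (ii), your two intermediate expressions for $s+\alpha$ are wrong: they equal $bps+2\alpha$ and $bps+\alpha$ respectively. Delete them. The correct reduction $s+\alpha=bps+p\alpha\iff s(1-bp)=(p-1)\alpha$, which is the definition of $\alpha$, already settles (ii).
\end{itemize}
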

\begin{proof}
Before proving \eqref{exp_uiHPW}-\eqref{exp_uivHPW}, we first do some preparation.
Since $F^*({\dd}r)=1$ and $v_\kappa$ is radial, we have $F^*({\dd}v_\kappa)=v'_\kappa$.
By definition and the assumption on $b$, we have
\begin{equation}\label{HPWalpha+1}
p \alpha  = (1-b)p's - bps, \quad \alpha +1 =\frac{p+s-1-bps}{p-1}>0,
\end{equation}
\begin{equation}\label{HPWv_kappa'}
F^*({\dd}v_\kappa) = v'_\kappa = \frac{1}{\kappa} r^\alpha  e^{ -\frac{r^{\alpha+1}}{\kappa(\alpha+1)}} =\frac{1}{\kappa} |v_\kappa| r^\alpha.
\end{equation}
Although $\alpha$ may be negative, the combined exponent $p \alpha + bps + n$
 is positive, which is the one used in the final estimates.

\medskip
For \eqref{exp_uiHPW},
by \eqref{HPWv_kappa'} and the upper bound in  \eqref{inte_ine}, we have
\[
\int_\Omega r^{bps} F^{*p}({\dd}v_\kappa) {\dd}x
\le \bar\varrho^{n+1} \mathscr{I}_{\m}(\mathbf{0}) \kappa^{-p}
\int_0^\infty e^{-\frac{p r^{\alpha+1}}{
\kappa(\alpha+1)}}
r^{p\alpha + bps + n-1} (r+\bar\varrho)^{-(n+1)} {\dd}r < +\infty,
\]
where the last inequality holds because $p \alpha + bps + n-1 = (1-b)p's + n-1 >0$ and the exponential decay dominates any polynomial.

\medskip
For \eqref{exp_u0HPW},
the equality \eqref{threeintegralHPW}  follows from \eqref{exp_uiHPW} and
\[ r^{bps} F^{*p}({\dd}v_\kappa)  = \frac{1}{\kappa^p}  |v_\kappa|^{p} r^{(1-b)p's}
=
\frac{1}{\kappa^{p-1}} {|v_{\kappa}|^{p-1}}{r^{s}}  F^*({\dd}v_\kappa), \]
which is from \eqref{HPWalpha+1} and \eqref{HPWv_kappa'}.

\medskip
For \eqref{exp_uiiHPW},
by the lower bound in \eqref{inte_ine},  we have
\begin{align} \label{HPWintlowerbound}
 \int_\Omega r^{bps} F^{*p}({\dd}v_\kappa) {\dd}x
& \geq  \hat\varrho^{n+1} \mathscr{I}_{\m}(\mathbf{0}) \kappa^{-p}
\int_0^{+\infty} e^{-\frac{p r^{\alpha+1}}{\kappa(\alpha+1) }}
r^{p\alpha + bps + n-1} (r+\hat\varrho)^{-(n+1)} {\dd}r.
\end{align}
Noting that $\kappa = R^{\alpha+1}$,
we set $t: = r/R$. Then
\begin{align*}
& \quad \int_0^{+\infty} e^{-\frac{p r^{\alpha+1}}{\kappa(\alpha+1) }}
r^{p\alpha + bps + n-1} (r+\hat\varrho)^{-(n+1)} {\dd}r \\
&= R^{p\alpha + bps + n} \int_0^{+\infty} e^{-\frac{p t^{\alpha+1}}{\alpha+1}}
t^{p\alpha + bps + n-1} (t R+\hat\varrho)^{-(n+1)} {\dd}t \\
& \geq R^{p\alpha + bps -1} \int_0^{+\infty} e^{-\frac{p t^{\alpha+1}}{\alpha+1}}
t^{p\alpha + bps + n-1} (t +\hat\varrho)^{-(n+1)} {\dd}t = :  R^{p\alpha + bps -1} {\hat\varrho}^{-n-1} C(b,n, p, s, \hat\varrho) ,
\end{align*}
 where the last integral is convergent since $p \alpha + bps + n-1  >0$ and $\alpha+1 >0$. Then it follows from \eqref{HPWintlowerbound} that
 \begin{align}
 \int_\Omega r^{bps} F^{*p}({\dd}v_\kappa) {\dd}x
& \geq  C(b,n, p, s, \hat\varrho) \mathscr{I}_{\m}(\mathbf{0}) \kappa^{-p}
R^{p\alpha + bps -1} > 0,
\end{align}
which is exactly  \eqref{exp_uiiHPW_ineq}.

\medskip
For \eqref{exp_uiiiHPW}, it follows from \eqref{HPWv_kappa'} and the upper bound in  \eqref{inte_ine} that
\begin{align}\label{HPWlemtailupperbound}
\int_{\Omega \setminus B_{e^R}(\mathbf{0})} r^{bps} F^{*p}({\dd}v_\kappa) \, {\dd}x
& \leq
\bar\varrho^{n+1} \mathscr{I}_{\m}(\mathbf{0}) \kappa^{-p} \int_{e^R}^\infty  e^{-\frac{p r^{\alpha+1}}{\kappa(\alpha+1)} } r^{p \alpha + bps  + n-1} (r+\bar\varrho)^{-(n+1)} {\dd}r.
\end{align}
Noting that $\kappa = R^{\alpha+1}$,
we set $t: = r/R$. Then
\begin{align*}
& \quad \int_{e^R}^\infty  e^{-\frac{p r^{\alpha+1}}{\kappa(\alpha+1)} } r^{p \alpha + bps  + n-1} (r+\bar\varrho)^{-(n+1)} {\dd}r \\
& = R^{p \alpha + bps  + n} \int_{\frac{e^R}{R}}^\infty  e^{-\frac{p t^{\alpha+1}}{\alpha+1} } t^{p \alpha + bps  + n-1} (t R+\bar\varrho)^{-(n+1)} {\dd}t \\
& \leq R^{p \alpha + bps  + n} e^{-R (n+1)} \int_{\frac{e^R}{R}}^\infty  e^{-\frac{p t^{\alpha+1}}{\alpha+1} } t^{p \alpha + bps  + n-1} {\dd}t,
\end{align*}
which together with \eqref{HPWlemtailupperbound} yields
\begin{align} \label{HPWlemtailupperbound_1}
\int_{\Omega \setminus B_{e^R}(\mathbf{0})} r^{bps} F^{*p}({\dd}v_\kappa) \, {\dd}x
 \leq \bar\varrho^{n+1} \mathscr{I}_{\m}(\mathbf{0}) \kappa^{-p} R^{p \alpha + bps  + n} e^{-R (n+1)} \int_{\frac{e^R}{R}}^\infty  e^{-\frac{p t^{\alpha+1}}{\alpha+1} } t^{p \alpha + bps  + n-1} {\dd}t.
\end{align}
Since $\int_{0}^\infty  e^{-\frac{p t^{\alpha+1}}{\alpha+1} } t^{p \alpha + bps  + n-1} {\dd}t < + \infty$, we have
\[ \lim_{R\to +\infty}\int_{\frac{e^R}{R}}^\infty  e^{-\frac{p t^{\alpha+1}}{\alpha+1} } t^{p \alpha + bps  + n-1} {\dd}t =0.   \]
Hence, combining  \eqref{HPWlemtailupperbound_1} and \eqref{exp_uiiHPW_ineq}, it follows that
\[
\lim_{R\to +\infty} \frac{\int_{\Omega \setminus B_{e^R}(\mathbf{0})} r^{bps} F^{*p}({\dd}v_\kappa) \, {\dd}x }{\int_{\Omega} r^{bps} F^{*p}({\dd}v_\kappa) \, {\dd}x  } =0.
\]

\medskip
For \eqref{exp_uivHPW}, it follows from \eqref{HPWv_kappa'} and \eqref{inte_ine} that
\begin{align}
\int_{B_{e^{-R}}(\mathbf{0})} r^{bps} F^{*p}({\dd}v_\kappa)\, {\dd}x
& \leq
\bar\varrho^{n+1} \mathscr{I}_{\m}(\mathbf{0}) \kappa^{-p} \int_0^{e^{-R}}  e^{-\frac{p r^{\alpha+1}}{\kappa(\alpha+1)}} r^{p \alpha + bps  + n-1} (r+\bar\varrho)^{-(n+1)} {\dd}r.
\end{align}
Then, using the same substitution as in the proof of \eqref{exp_uiiiHPW}, we get
\begin{align*}
& \quad \int_0^{e^{-R}}  e^{-\frac{p r^{\alpha+1}}{\kappa(\alpha+1)}} r^{p \alpha + bps  + n-1} (r+\bar\varrho)^{-(n+1)} {\dd}r \\
& = R^{p \alpha + bps  + n} \int_{0}^{\frac{e^{-R}}{R}}  e^{-\frac{p t^{\alpha+1}}{\alpha+1} } t^{p \alpha + bps  + n-1} (t R+\bar\varrho)^{-(n+1)} {\dd}t \\
& \leq \bar\varrho^{-(n+1)} R^{p \alpha + bps  + n} \int_{0}^{\frac{e^{-R}}{R}} t^{p \alpha + bps  + n-1} {\dd}t
 =  \bar\varrho^{-(n+1)} (p \alpha + bps  + n)^{-1} e^{ - R (p \alpha + bps  + n)}.
\end{align*}
Hence,
\[ \int_{B_{e^{-R}}(\mathbf{0})} r^{bps} F^{*p}({\dd}v_\kappa)\, {\dd}x
 \leq (p \alpha + bps  + n)^{-1} \mathscr{I}_{\m}(\mathbf{0}) \kappa^{-p} e^{ - R (p \alpha + bps  + n)}, \]
which together with \eqref{exp_uiiHPW_ineq} implies
\[
\lim_{R\to +\infty} \frac{\int_{B_{e^{-R}}(\mathbf{0})} r^{bps} F^{*p}({\dd}v_\kappa)\, {\dd}x  }{\int_{\Omega} r^{bps} F^{*p}({\dd}v_\kappa) \, {\dd}x  } =0.
\]
This completes the proof.
\end{proof}

\begin{theorem}\label{thm:HPW_sharp}
Let $(\Omega, F, \mathscr{L}^n)$ be a forward complete but not backward complete projectively flat Finsler manifold with $\mathbf{K}=0$ and the generating Minkowski norm $\psi$ reversible, endowed with the Lebesgue measure. Let  $p >1$, $s>2$, and $0 \leq b < 1$ satisfying
\begin{align}\label{condition_b*HPW1}
b< \frac{p+s-1}{p s}.
\end{align}
Then for any \(f\in C_0^\infty(\Omega)\), the following HPW-type inequality holds:
\begin{align}\label{sbeerHolder_ineqHPW}
&\left( \int_{\Omega} r^{bps} F^{p}(\nabla f) {\dd}x \right)^{1/{p}} \left( \int_{\Omega} |f|^{p} r^{(1-b)p's} {\dd}x  \right)^{1/{p'}}
 \notag\\
&\qquad \qquad\geq \frac{1}{p} \left[(s-2) \int_{\Omega}  {|f|^{p}}{r^{s-1}}   {\dd}x  + (n+1) \int_{\Omega} {|f|^{p}} (1-\phi(x)) r^{s-1} {\dd}x\right],
\end{align}
where the constant \( 1/p\) is sharp.
\end{theorem}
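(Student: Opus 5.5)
The inequality \eqref{sbeerHolder_ineqHPW} is exactly the case $a=p$ of the argument proving Theorem~\ref{thm:CKNHPW_threshold}\eqref{1.2(i)}. Apply H\"older's inequality \eqref{Thm1.2_Holder1} with $a=p$; then $p'(a-1)=p$. This bounds the left-hand side from below by $\int_\Omega|f|^{p-1}r^sF^*({\dd}f)\,{\dd}x$. Then invoke Corollary~\ref{maincorollary} with $a=p$ and $\lambda_\psi=1$. So the real work is sharpness of $1/p$, and I will follow the scheme of Theorem~\ref{sbeerHolder}, with $u_\kappa$ replaced by the exponential profile $v_\kappa$ of Lemma~\ref{lemma_u_exp_HPW}.

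By \eqref{threeintegralHPW}, the H\"older equality condition \eqref{thm1.2_HolderFunc} holds for $v_\kappa$ with constant $c=\kappa^{-p}$. Set $A:=\int r^{bps}(v_\kappa')^p\,{\dd}x$, so that $\tilde A=\kappa^pA$ and $V=\kappa^{p-1}A$. Then
\[
A^{1/p}\tilde A^{1/p'}=\kappa^{p-1}A=V,
\]
and the ratio is exactly $1$ before truncation. I will define $v_{\kappa,R}:=\eta_R v_\kappa(0)+(1-\eta_R)\varphi_R v_\kappa$ with the cutoffs \eqref{varphi_eta(r)}, \eqref{varphi_R(r)}. As before, $v_{\kappa,R}\in C_0^\infty(\Omega)$, $v_{\kappa,R}\le0$ and $v_{\kappa,R}'\ge0$. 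Hence, by Corollary~\ref{maincorollary}, $v_{\kappa,R}$ attains equality in \eqref{ineqmain1_reverpsi}. It therefore suffices to show
\[
\frac{(A_R)^{1/p}(\tilde A_R)^{1/p'}}{V_R}\to1 .
\]
To do this I will split into $I_0,\dots,I_3$ and normalize by the natural scales: $A_R/A$, $\tilde A_R/(\kappa^pA)$ and $V_R/(\kappa^{p-1}A)$. Each should tend to $1$, since the factors $\kappa$ cancel in the ratio.

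On $I_1$, $v_{\kappa,R}=v_\kappa$. Parts \eqref{exp_uiiiHPW}--\eqref{exp_uivHPW} of Lemma~\ref{lemma_u_exp_HPW} give $A^{(1)}_R/A\to1$, and the identities \eqref{threeintegralHPW} hold pointwise, so they transfer to $\tilde A^{(1)}_R$ and $V^{(1)}_R$.

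On $I_0$, I use $|v_{\kappa,R}|\le1$ and $|v_{\kappa,R}'|\le C e^R+|v_\kappa'|$. The latter is at most $C e^R+\kappa^{-1}r^\alpha$, and the $r^\alpha$ term is harmless after weighting, because $p\alpha+bps+n>0$ and $(1-b)p's+n>0$. Then the bounds \eqref{inte_ine} give contributions $O(e^{-cR})$ times powers of $R$. These are negligible against the lower bound \eqref{exp_uiiHPW_ineq}, which is polynomial in $R$: it reads $\kappa^{-p}R^{p\alpha+bps-1}$, with $\kappa^pA\gtrsim R^{p\alpha+bps-1}$.

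On $I_2=[e^R,e^R+1]$, the factor $e^{-r^{\alpha+1}/(\kappa(\alpha+1))}$ with $\kappa=R^{\alpha+1}$ is at most $\exp(-e^{(\alpha+1)R}/((\alpha+1)R^{\alpha+1}))$. This is superexponentially small, and it dominates the derivative of $\varphi_R$ (bounded by $C_2$) and all polynomial weights. So $A^{(2)}_R$, $\tilde A^{(2)}_R$ and $V^{(2)}_R$ are negligible relative to their scales.

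The main obstacle I expect is the bookkeeping on $I_0$, in particular for $V^{(0)}_R$ relative to $\kappa^{p-1}A$ and for $A^{(0)}_R$ when $b=0$. There the $e^{pR}$ from $\eta_R'$ must be beaten by $r^{bps+n-1}$ integrated over $[0,e^{-R}]$, which yields $e^{-(bps+n)R}$. This works because $bps+n>p$, using $p<n$ if needed.

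Finally, since equality holds in \eqref{ineqmain1_reverpsi} for each $v_{\kappa,R}$ while the H\"older ratio tends to $1$, no constant larger than $1/p$ can work. This proves sharpness.
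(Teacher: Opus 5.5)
Your proposal is correct and follows the paper's proof essentially step for step. For the inequality, you use H\"older's inequality together with Corollary~\ref{maincorollary} at $a=p$. For sharpness, you use the truncated exponential profile $v_{\kappa,R}$, the splitting into $I_0,\dots,I_3$, and the normalizations $A$, $\kappa^pA$, $\kappa^{p-1}A$.

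The only deviation is on $I_0$. The paper (Lemma~\ref{lem:I0_estimatesHPW}) uses $1-\eta_R(r)\le C_1e^Rr$ and $|v_\kappa(0)-v_\kappa(r)|\le r^{\alpha+1}/(\kappa(\alpha+1))$ to get $|v'_{\kappa,R}|\le \bar C_1\kappa^{-1}e^{-\alpha R}$, hence $A^{(0)}_R\lesssim \kappa^{-p}e^{-((1-b)p's+n)R}$; this bound needs no relation between $p$ and $n$. Your cruder bound $Ce^R+|v_\kappa'|$ genuinely requires $bps+n>p$, i.e.\ $p<n$ when $b=0$. That assumption stands in the paper's main theorems but is not written in this statement, so the sharper estimate is the cleaner way to close that step.
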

\begin{proof}[Proof of Theorem \ref{thm:HPW_sharp}]
By \eqref{granotwoff}, replace $F(\nabla f)$ by $F^*({\dd}f)$.
The inequality follows by the proof of Theorem \ref{thm:CKNHPW_threshold}\eqref{1.2(i)} with $\lambda_\psi=1$, or by applying H\"older's inequality to Corollary \ref{maincorollary} with \(a=p\),
i.e.,
\begin{align*}
&\left( \int_{\Omega} r^{bps} F^{*p}({\dd}f) {\dd}x \right)^{1/{p}} \left( \int_{\Omega} |f|^{p} r^{(1-b)p's} {\dd}x  \right)^{1/{p'}}
 \geq
\int_{\Omega} {|f|^{p-1}}{r^{s}}  F^*({\dd}f) {\dd}x  \\
& \qquad \qquad \geq  \frac{1}{p} \left[(s-2) \int_{\Omega}  {|f|^{p}}{r^{s-1}}   {\dd}x  + (n+1) \int_{\Omega} {|f|^{p}} (1-\phi(x)) r^{s-1} {\dd}x\right].
\end{align*}
It remains to prove the sharpness.

By H\"older's equality condition, the first equality in the above formula holds if and only if
\begin{equation} \label{thm1.2_HolderFuncHPW}
r^{bps} F^{*p}({\dd}f) = c |f|^{p} r^{(1-b)p's},
\end{equation}
for some constant \(c>0\).
The function \(v_\kappa\) defined in Lemma \ref{lemma_u_exp_HPW},
 i.e.,
\begin{equation}\label{v_kappa}
v_\kappa(r) =  - e^{ -\frac{r^{\alpha+1}}{\kappa(\alpha+1) }},
\end{equation}
where
\[
\alpha: = \frac{s(1-bp)}{p-1},\quad \kappa = R^{\alpha+1}, \quad R> \max\{2, \hat{\varrho}\}.
\]
satisfies \eqref{thm1.2_HolderFuncHPW} with $c= \kappa^{-p} $, since \(v'_\kappa = \frac{1}{\kappa}|v_\kappa| r^\alpha\) and \(\alpha p = (1-b)p's - bps\). 
It can be verified directly that $v_{\kappa} < 0$, $v'_{\kappa} > 0$, and $\alpha +1 >0$ since $b< \frac{p+s-1}{p s}$ by the assumption.

Although $v_{\kappa}$ is not compactly supported and not smooth at $r=0$, we still can construct a smooth
function with compact support by $v_{\kappa}$ and two cut-off functions $\eta_R(r)$, $\varphi_R(r)$ as in \eqref{varphi_eta(r)} and \eqref{varphi_R(r)}.
Set
\begin{equation} \label{h_RHPW}
v_{\kappa, R}(r)= \eta_R(r)v_{\kappa}(0) + (1-\eta_R(r)) \varphi_R(r) v_{\kappa}(r).
\end{equation}
Then \(v_{\kappa,R}\in C_0^\infty(\Omega)\), \(v_{\kappa,R}\le0\), and \(v_{\kappa,R}'\ge0\). By Corollary \ref{maincorollary}, \(v_{\kappa,R}\) achieves equality in the underlying Hardy-type inequality \eqref{ineqmain1_reverpsi} for $a=p$, i.e.
 \begin{align*}
\int_\Omega |v_{\kappa,R}|^{p-1} r^s F(\nabla v_{\kappa,R}){\dd}x
= \frac{1}{p}\left[(s-2)\int_\Omega |v_{\kappa,R}|^p r^{s-1}{\dd}x
+ (n+1)\int_\Omega |v_{\kappa,R}|^p (1-\phi(x)) r^{s-1}{\dd}x\right].
\end{align*}

Define
\[
A_R:=\int_\Omega r^{bps}F^{*p}({\dd}v_{\kappa,R})\,{\dd}x,\quad
\tilde A_R:=\int_\Omega |v_{\kappa,R}|^{p}r^{(1-b)p's}\,{\dd}x,\quad
V_R:=\int_\Omega |v_{\kappa,R}|^{p-1}r^sF^*({\dd}v_{\kappa,R})\,{\dd}x.
\]
Therefore, to prove that $1/p$ is the sharp constant in \eqref{sbeerHolder_ineqHPW}, it suffices to show
\begin{equation}\label{Q_RHPW}
\lim_{R\to +\infty}\frac{(A_R)^{1/p}(\tilde A_R)^{1/p'}}{V_R}=1.
\end{equation}

We split \([0,\infty)\) into \(I_0=[0,e^{-R}]\), \(I_1=[e^{-R},e^R]\), \(I_2=[e^R,e^R+1]\), \(I_3=[e^R+1,\infty)\).
For simplicity, for $v_{\kappa, R}$, define
\begin{equation} \label{v_kappaRiHPW}
v_{\kappa, R, i} := v_{\kappa, R}|_{r\in I_i}, \quad v'_{\kappa, R, i} := v'_{\kappa, R}|_{r\in I_i},
\end{equation}
\begin{align*}
A^{(i)}_R&: = \int_{\Omega} |v'_{\kappa, R, i}|^{p}\, r^{bps} {\dd}x,\qquad
\tilde{A}^{(i)}_R: = \int_{\Omega}   |v_{\kappa, R, i}|^{p}\,r^{(1-b)p' s}\,{\dd}x,\\
V^{(i)}_R&: = \int_{\Omega} { |v_{\kappa, R, i}|^{p-1}}{r^{s}} |v'_{\kappa, R, i}| {\dd}x, \qquad i=0, 1, 2, 3.
\end{align*}
On \(I_3\), \(v_{\kappa,R}\equiv0\), so the corresponding integrals vanish.
Then
\begin{equation} \label{AAVHPW}
A_R =A^{(0)}_R + A^{(1)}_R + A^{(2)}_R, \quad \tilde{A}_R = \tilde{A}^{(0)}_R + \tilde{A}^{(1)}_R + \tilde{A}^{(2)}_R, \quad V_R =  V^{(0)}_R + V^{(1)}_R + V^{(2)}_R.
\end{equation}

The estimates on the remaining intervals are given in Lemmas \ref{lem:I0_estimatesHPW}, \ref{lem:I1_estimatesHPW}, and \ref{lem:I2_estimatesHPW} at the end of this subsection. They yield
\[
\lim_{R\to +\infty} \frac{A^{(0)}_R}{A}=\lim_{R\to +\infty}\frac{ \tilde A^{(0)}_R}{\kappa^p A}=\lim_{R\to +\infty} \frac{V^{(0)}_R }{\kappa^{p-1} A}=0,\qquad
\lim_{R\to +\infty} \frac{A^{(1)}_R}{A}=\lim_{R\to +\infty} \frac{\tilde A^{(1)}_R }{\kappa^p A}=\lim_{R\to +\infty}  \frac{ V^{(1)}_R}{\kappa^{p-1} A}=1,
\]
\[
\lim_{R\to +\infty} \frac{A^{(2)}_R}{A}=\lim_{R\to +\infty} \frac{\tilde A^{(2)}_R}{\kappa^p A}=\lim_{R\to +\infty} \frac{V^{(2)}_R}{ \kappa^{p-1} A }=0,
\]
where $A:=\int_\Omega |v_\kappa'|^p  r^{bps}{\dd}x$ is finite and positive by Lemma \ref{lemma_u_exp_HPW}\eqref{exp_uiHPW} and \eqref{exp_uiiHPW_ineq}. Hence
\[
\lim_{R\to +\infty} \frac{ A_R}{A} = \lim_{R\to +\infty} \frac{\tilde A_R}{\kappa^{p} A}   = \lim_{R\to +\infty}  \frac{V_R}{\kappa^{p-1} A}  = 1,
\]
and consequently
\begin{equation*}
\lim_{R\to +\infty}\frac{(A_R)^{1/p}(\tilde A_R)^{1/p'}}{V_R}=1.
\end{equation*}
This completes the proof.
\end{proof}

We now prove Theorem \ref{thm:CKNHPW_sharp}\eqref{1.3(ii)} using the same method and function as in the preceding theorem.
\begin{proof}[Proof of Theorem \ref{thm:CKNHPW_sharp}\eqref{1.3(ii)}]
Since  the assumptions are the same as those in Theorem \ref{thm:HPW_sharp} and $1-\phi(x) >0$, the inequality \eqref{CKNsharp} ($a=p$)
holds naturally by
\eqref{sbeerHolder_ineqHPW}, i.e.,
\begin{align}\label{sbeerHolder_ineqHPW_1}
\left( \int_{\Omega} r^{bps} F^{*p}({\dd}f) {\dd}x \right)^{1/{p}} \left( \int_{\Omega} |f|^{p} r^{(1-b)p's} {\dd}x  \right)^{1/{p'}}
 \geq \frac{s-2}{p}  \int_{\Omega}  {|f|^{p}} r^{s-1}   {\dd}x.
\end{align}
Moreover, for $a=p$, the condition on $b$ is now simplified into
\begin{align}\label{condition_b*HPW1_1}
b< \frac{p+s-1}{p s}.
\end{align}
Hence, we only need to show $(s-2)/p$ is the sharp constant.
To prove it, we still use the same function $v_{\kappa}$ and the two cut-off functions $\eta_R$, $\varphi_R$ to compose $v_{\kappa, R}$ as in \eqref{h_RHPW}.
By definition, since $|v_{\kappa}(0)| = 1 \geq |v_{\kappa}(r)|$,  the following estimate holds:
\begin{equation}\label{HPWvbound}
|v_{\kappa, R}| =
\begin{cases}
|\eta_R(r)v_{\kappa}(0)| + |(1-\eta_R(r)) v_{\kappa}(r)| \leq 1, &\qquad \text{if } r\in [0, e^{-R}], \\[3pt]
|v_{\kappa}(r)| = e^{ -\frac{r^{\alpha+1}}{\kappa(\alpha+1) }}, &\qquad \text{if } r\in [e^{-R}, e^R],\\[3pt]
|\varphi_R(r) v_{\kappa}(r)| \leq |v_{\kappa}(r)| = e^{ -\frac{r^{\alpha+1}}{\kappa(\alpha+1) }}, &\qquad \text{if } r\in [e^R, e^{R}+1], \\[3pt]
0, &\qquad \text{if } r\in [e^{R}+1, +\infty).
\end{cases}
\end{equation}
Then by the proof of Theorem \ref{thm:HPW_sharp}, the equation \eqref{Q_RHPW} still holds for $v_{\kappa, R}$.
Thus it remains to show
\begin{equation}\label{HPWsharptarget}
\lim_{R \rightarrow +\infty}  \frac{ \int_{\Omega} {| v_{\kappa, R}|^{p}} (1-\phi(x)) r^{s-1}\, {\dd}x}{ \int_{\Omega}  {|v_{\kappa, R}|^{p}} r^{s-1}\,  {\dd}x  }
=0.
\end{equation}

By \eqref{phi_rho_minmax}, we have
\begin{align}\label{HPW1-phi}
\frac{\hat{\varrho}}{\hat{\varrho} +r} \leq  1-\phi(x) \leq   \frac{\bar{\varrho}}{\bar{\varrho} +r}.
\end{align}
Now we divide the proof into two steps to estimate the two integrals in the target limit \eqref{HPWsharptarget}.

\medskip
\noindent
\paragraph{\emph{Step 1: Numerator estimate.}}
Combining \eqref{inte_ine} and \eqref{HPW1-phi}, we get
\begin{align}
& \quad \int_{\Omega} {| {v}_{\kappa, R}|^{p}} (1-\phi(x)) r^{s-1}\, {\dd}x
\leq \int_{\Omega} {| {v}_{\kappa, R}|^{p}} r^{s-1} \frac{\bar{\varrho}}{\bar{\varrho} +r} \, {\dd}x \notag \\
& \leq {\bar{\varrho}}^{n+2}  \mathscr{I}_{\m}(\mathbf{0})
\int^{+\infty}_0  {| {v}_{\kappa, R}|^{p}} r^{s+ n-2} (r+\bar{\varrho})^{-(n+2)} {\dd}r
\notag\\
& = {\bar{\varrho}}^{n+2}  \mathscr{I}_{\m}(\mathbf{0})
\int^{1}_0  {|v_{\kappa, R}|^{p}} r^{s+ n-2} (r+\bar{\varrho})^{-(n+2)} {\dd}r + {\bar{\varrho}}^{n+2} \mathscr{I}_{\m}(\mathbf{0})
\int^{e^R+1}_1  {|v_{\kappa, R}|^{p}} r^{s+ n-2} (r+\bar{\varrho})^{-(n+2)} {\dd}r
\notag\\
& \leq \mathscr{I}_{\m}(\mathbf{0})
\int^{1}_0  {|v_{\kappa, R}|^{p}}  {\dd}r
 + {\bar{\varrho}}^{n+2} \mathscr{I}_{\m}(\mathbf{0}) \int_1^{e^R+1} |v_{\kappa, R}|^{p} r^{s-4} \,{\dd}r.
\label{HPW_key1}
\end{align}
The estimate  \eqref{HPWvbound} yields that $|v_{\kappa, R}| \leq 1$ for $r\in [0, 1]$, and  $|v_{\kappa, R}| \leq  e^{ -\frac{r^{\alpha+1}}{\kappa(\alpha+1) }}$ for $r\in [1, e^{R+1}]$. Then \eqref{HPW_key1} implies
\[
\int_{\Omega} {| {v}_{\kappa, R}|^{p}} (1-\phi(x)) r^{s-1}\, {\dd}x  \leq \mathscr{I}_{\m}(\mathbf{0})  + {\bar{\varrho}}^{n+2} \mathscr{I}_{\m}(\mathbf{0}) \int_1^{e^R+1} e^{ -\frac{p r^{\alpha+1}}{\kappa(\alpha+1) }} r^{s-4} \,{\dd}r.
\]
To estimate the tail integral \[\int_1^{e^R+1} e^{ -\frac{p r^{\alpha+1}}{\kappa(\alpha+1) }} r^{s-4} \,{\dd}r,\]
we consider the two cases: $2<s \leq 3$ and $s>3$.

\medskip
	\subparagraph{\bf{Case 1: $2<s \leq 3$.}}
Since $R>2$, we have
\begin{align}
 \int_1^{e^R+1} e^{ -\frac{p r^{\alpha+1}}{\kappa(\alpha+1) }} r^{s-4} \,{\dd}r
& \leq  \int_1^{R} r^{s-4} \,{\dd}r  +  \int_R^{e^R+1} e^{ -\frac{p r^{\alpha+1}}{\kappa(\alpha+1) }} r^{s-4} \,{\dd}r \notag\\
& \leq  \ln R + \int_R^{e^R+1} e^{ -\frac{p r^{\alpha+1}}{\kappa(\alpha+1) }} r^{s-4} \,{\dd}r.
\end{align}
Noting that $\kappa = R^{\alpha+1}$, we set $t: = r/R$. Then
\[
\int_R^{e^R+1} e^{ -\frac{p r^{\alpha+1}}{\kappa(\alpha+1) }} r^{s-4} \,{\dd}r
= R^{s-3} \int_{1}^{\frac{e^R+1}{R}} e^{ -\frac{p t^{\alpha+1}}{\alpha+1 }} t^{s-4} \,{\dd}t
\leq R^{s-3} \int_{1}^{+\infty} e^{ -\frac{p t^{\alpha+1}}{\alpha+1 }} t^{s-4} \,{\dd}t = :   \mathcal{C}_1(b,p,s) R^{s-3},
\]
where the convergence of $\int_{1}^{+\infty} e^{ -\frac{p t^{\alpha+1}}{\alpha+1 }} t^{s-4} \,{\dd}t$ is used. Hence,
\begin{equation}\label{sharpHPWnumer}
\int_1^{e^R+1} e^{ -\frac{p r^{\alpha+1}}{\kappa(\alpha+1) }} r^{s-4} \,{\dd}r \leq  \ln R +  \mathcal{C}_1(b,p,s) R^{s-3}.
\end{equation}

\medskip
	\subparagraph{\bf{Case 2: $s> 3$.}}
Noting that $\kappa = R^{\alpha+1}$, we let $t: = r/R$. Then
\[
\int_1^{e^R+1} e^{ -\frac{p r^{\alpha+1}}{\kappa(\alpha+1) }} r^{s-4} \,{\dd}r
= R^{s-3} \int_{\frac{1}{R}}^{\frac{e^R+1}{R}} e^{ -\frac{p t^{\alpha+1}}{\alpha+1 }} t^{s-4} \,{\dd}t
\leq R^{s-3} \int_{0}^{+\infty} e^{ -\frac{p t^{\alpha+1}}{\alpha+1 }} t^{s-4} \,{\dd}t.
\]
Since $s-4 > -1$, the integral $\int_{0}^{+\infty} e^{ -\frac{p t^{\alpha+1}}{\alpha+1 }} t^{s-4} \,{\dd}t$ is convergent. Then by setting
$  \mathcal{C}_2(b,p,s): =\int_{0}^{+\infty} e^{ -\frac{p t^{\alpha+1}}{\alpha+1 }} t^{s-4} \,{\dd}t$, we have
\[
\int_1^{e^R+1} e^{ -\frac{p r^{\alpha+1}}{\kappa(\alpha+1) }} r^{s-4} \,{\dd}r \leq \mathcal{C}_2(b,p,s)  R^{s-3}.
\]
Thus, combining these two cases, we obtain
\begin{equation}\label{HPWsharpnumer}
\int_{\Omega} {| {v}_{\kappa, R}|^{p}} (1-\phi(x)) r^{s-1}\, {\dd}x
\leq
\begin{cases}
 \mathscr{I}_{\m}(\mathbf{0})\left[ 1 + {\bar{\varrho}}^{n+2}  \ln R + {\bar{\varrho}}^{n+2} \mathcal{C}_1(b,p,s)R^{s-3} \right],   & \text{if }2< s \leq 3,\\[3pt]
 \mathscr{I}_{\m}(\mathbf{0})\left[ 1 + {\bar{\varrho}}^{n+2} \mathcal{C}_2(b,p,s) R^{s-3} \right], &  \text{if } s> 3.
\end{cases}
\end{equation}

\medskip
\noindent
\paragraph{\emph{Step 2: Denominator estimate.}}
Combining  \eqref{inte_ine}  and \eqref{HPWvbound}, it follows that
\begin{align}
\int_{\Omega}  {| {v}_{\kappa, R}|^{p}} r^{s-1}\,  {\dd}x
&\geq {\hat{\varrho}}^{n+1} \mathscr{I}_{\m}(\mathbf{0}) \int^{+\infty}_0 {| v_{\kappa, R}|^{p}} r^{s+n-2} (r+\hat{\varrho})^{-(n+1)} \, {\dd}r \notag\\
& \geq {\hat{\varrho}}^{n+1} \mathscr{I}_{\m}(\mathbf{0}) \int^{e^R}_R {| v_{\kappa, R}|^{p}} r^{s+n-2} (r+\hat{\varrho})^{-(n+1)} \, {\dd}r \notag\\
& = {\hat{\varrho}}^{n+1} \mathscr{I}_{\m}(\mathbf{0}) \int^{e^R}_R e^{ -\frac{p r^{\alpha+1}}{\kappa(\alpha+1) }} r^{s+n-2} (r+\hat{\varrho})^{-(n+1)} \, {\dd}r \notag\\
&\geq  \left( \frac{\hat{\varrho}}{2} \right)^{n+1} \mathscr{I}_{\m}(\mathbf{0}) \int^{e^R}_R e^{ -\frac{p r^{\alpha+1}}{\kappa(\alpha+1) }} r^{s- 3} \, {\dd}r,
\label{HPW_key2}
\end{align}
where the last inequality follows from $R > \hat{\varrho}$, as defined in \eqref{v_kappa}.
Let $t: = r/R$. Then by $\kappa = R^{\alpha+1}$, we have
\[
\int^{e^R}_R e^{ -\frac{p r^{\alpha+1}}{\kappa(\alpha+1) }} r^{s-3} \, {\dd}r
=  R^{s-2} \int^{\frac{e^R}{R}}_1 e^{ -\frac{p t^{\alpha+1}}{\alpha+1 }} t^{s-3} \, {\dd}t
\geq  R^{s-2} \int^2_1 e^{ -\frac{p t^{\alpha+1}}{\alpha+1 }} t^{s-3} \, {\dd}t
=: \mathcal{C}_3(b,p,s)  R^{s-2},
\]
since $\int^{2}_1 e^{ -\frac{p t^{\alpha+1}}{\alpha+1 }} t^{s-3} \, {\dd}t  < + \infty$.
Hence, it follows from \eqref{HPW_key2} that
\begin{equation}\label{HPWsharpdenomi}
\int_{\Omega}  {| {v}_{\kappa, R}|^{p}} r^{s-1}\,  {\dd}x
\geq  \left( \frac{\hat{\varrho}}{2} \right)^{n+1} \mathscr{I}_{\m}(\mathbf{0}) \mathcal{C}_3(b,p,s) R^{s-2}.
\end{equation}

\medskip
Therefore, together with \eqref{HPWsharpnumer} and \eqref{HPWsharpdenomi}, it yields
\begin{equation*}
\lim_{R \rightarrow +\infty}  \frac{ \int_{\Omega} {| v_{\kappa, R}|^{p}} (1-\phi(x)) r^{s-1}\, {\dd}x}{ \int_{\Omega}  {|v_{\kappa, R}|^{p}} r^{s-1}\,  {\dd}x  }
\leq
\begin{cases}
\lim_{R \rightarrow +\infty} \frac{  1 + {\bar{\varrho}}^{n+2}  \ln R + {\bar{\varrho}}^{n+2}    \mathcal{C}_1(b,p,s)R^{s-3} }{\left( \frac{\hat{\varrho}}{2} \right)^{n+1}  \mathcal{C}_3(b,p,s) R^{s-2}} = 0,   & \text{if }2< s \leq 3,\\[18pt]
\lim_{R \rightarrow +\infty} \frac{ 1 + {\bar{\varrho}}^{n+2} \mathcal{C}_2(b,p,s) R^{s-3}}{\left( \frac{\hat{\varrho}}{2} \right)^{n+1} \mathcal{C}_3(b,p,s)  R^{s-2}} =0, &  \text{if } s> 3.
\end{cases}
\end{equation*}
This proves  \eqref{HPWsharptarget} and hence completes the proof.
\end{proof}

The three auxiliary lemmas for the interval estimates are stated below.
\begin{lemma}\label{lem:I0_estimatesHPW}
With the notation of Theorem \ref{thm:HPW_sharp}, we have
\[
\lim_{R\to +\infty} \frac{A^{(0)}_R}{A}=\lim_{R\to +\infty}\frac{ \tilde A^{(0)}_R}{\kappa^p A}=\lim_{R\to +\infty} \frac{V^{(0)}_R }{\kappa^{p-1} A}=0.
\]
\end{lemma}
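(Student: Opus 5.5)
The plan mirrors the proof of Lemma~\ref{lem:I0_estimates}, now using the exponential profile $v_\kappa$ and the normalizations $A$, $\kappa^pA$, $\kappa^{p-1}A$ that appear in \eqref{threeintegralHPW}. All three quantities are compared with the lower bound \eqref{exp_uiiHPW_ineq}, namely $A\ge C\,\mathscr{I}_{\m}(\mathbf{0})\kappa^{-p}R^{p\alpha+bps-1}$ with $\kappa=R^{\alpha+1}$. Every quantity on $I_0=[0,e^{-R}]$ will be shown to be at most $\mathscr{I}_{\m}(\mathbf{0})$ times a power of $R$ times a decaying exponential $e^{-cR}$ with $c>0$. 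Dividing by a power of $R$ then gives limit $0$.

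\textbf{Pointwise bounds on $I_0$.} Here $v_{\kappa,R}=\eta_R v_\kappa(0)+(1-\eta_R)v_\kappa$. Since $|v_\kappa|\le 1=|v_\kappa(0)|$, we get $|v_{\kappa,R}|\le 1$. Differentiating, and using $|\eta_R'|\le C_1e^R$ together with $|v_\kappa(0)-v_\kappa(r)|\le 1$, gives
\[
|v'_{\kappa,R}|\le C_1e^R+v'_\kappa(r),\qquad v'_\kappa(r)=\kappa^{-1}|v_\kappa|r^\alpha\le \kappa^{-1}r^\alpha .
\]
Convexity of $t\mapsto t^p$ then yields $|v'_{\kappa,R}|^p\le 2^{p-1}\bigl(C_1^pe^{pR}+\kappa^{-p}r^{p\alpha}\bigr)$.

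\textbf{The three integrals.} We insert these bounds into the upper estimate \eqref{inte_ine} and use $(r+\bar\varrho)^{-(n+1)}\le\bar\varrho^{-(n+1)}$.

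For $A^{(0)}_R$ we obtain
\[
\int_0^{e^{-R}}\bigl(C_1^pe^{pR}r^{bps+n-1}+\kappa^{-p}r^{p\alpha+bps+n-1}\bigr)\,dr
\lesssim e^{-(bps+n-p)R}+\kappa^{-p}e^{-(p\alpha+bps+n)R}.
\]
Both exponents are positive: the first because $p<n$ and $b\ge0$, the second because $p\alpha+bps+n=(1-b)p's+n>0$.

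For $\tilde A^{(0)}_R/\kappa^p$ the integrand is at most $r^{(1-b)p's+n-1}$. This gives the bound $\kappa^{-p}e^{-((1-b)p's+n)R}$.

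For $V^{(0)}_R/\kappa^{p-1}$ we use $r^s|v'_{\kappa,R}|\le C_1e^Rr^s+\kappa^{-1}r^{s+\alpha}$. Integrating against $r^{n-1}$ over $I_0$ gives at most $e^{-(s+n-1)R}+\kappa^{-1}e^{-(s+\alpha+n)R}$, up to constants. Here $s+\alpha+n>0$ follows from $\alpha+1>0$ and $s>2$.

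\textbf{Comparison with $A$ and the main check.} Dividing by the lower bound on $A$ introduces factors $\kappa^{p}R^{1-p\alpha-bps}$ (or $\kappa R^{\cdots}$ after the normalizations). These are polynomial in $R$, so the exponential decay wins in each case.

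The main thing to verify carefully is that every exponent of $e^{-R}$ is strictly positive. In particular:
\begin{itemize}
\item In $A^{(0)}_R$, the cutoff-derivative term requires $bps+n-p>0$, which holds since $p<n$.
\item The cross terms use $\alpha+1>0$, which comes from \eqref{condition_b*HPW1}.
\item When $\alpha<0$, the term $r^{p\alpha}$ is singular at $0$, so its integrability relies on $p\alpha+bps+n>0$.
\end{itemize}
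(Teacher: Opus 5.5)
Your argument is correct, and it follows the paper's overall scheme: pointwise bounds on $I_0$, the upper volume estimate \eqref{inte_ine}, and comparison with the lower bound \eqref{exp_uiiHPW_ineq}. The one real difference is how you handle the cutoff term $\eta_R'(r)\bigl(v_\kappa(0)-v_\kappa(r)\bigr)$.

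\emph{Your route.} You use the crude bound $|v_\kappa(0)-v_\kappa(r)|\le 1$. This leaves a factor $e^{pR}$ in $|v'_{\kappa,R}|^p$, and hence a contribution of order $e^{-(bps+n-p)R}$ to $A^{(0)}_R$. That term decays only because $bps+n>p$, which you get from $p<n$. The hypothesis $p<n$ is a standing assumption of Theorems~\ref{thm:CKNHPW_threshold} and~\ref{thm:CKNHPW_sharp}, but it is not listed in Theorem~\ref{thm:HPW_sharp} itself. You should say explicitly that you use it: for $b=0$ and $p\ge n$, your bound divided by $A$ is of order $R^{p+1}e^{(p-n)R}$, which does not tend to $0$.

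\emph{The paper's route.} The paper applies the mean value theorem twice:
\[
1-\eta_R(r)\le C_1e^Rr,\qquad |v_\kappa(0)-v_\kappa(r)|\le \frac{r^{\alpha+1}}{\kappa(\alpha+1)}.
\]
This gives the uniform bound $|v'_{\kappa,R}|\le \bar C_1\kappa^{-1}e^{-\alpha R}$ on $I_0$. The resulting decay rates are $e^{-((1-b)p's+n)R}$ for $A^{(0)}_R$ and $e^{-(\alpha+s+n)R}$ for $V^{(0)}_R$. Both exponents are positive for every $p>1$, even though the paper mentions $p<n$ in passing.

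\emph{Comparison.} The paper's refinement buys independence from $p<n$. Your version is simpler and mirrors the treatment of the CKN case in Lemma~\ref{lem:I0_estimates}.
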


\begin{proof}
	On $I_0=[0,e^{-R}]$,
	\[
	v_{\kappa,R}
	=\eta_R(r)v_\kappa(0)+(1-\eta_R(r))v_\kappa(r).
	\]
	Since $|\eta_R'|\le C_1 e^R$ for some constant $C_1>0$, and
	$|v_{\kappa,R}|\le |v_\kappa(0)|=1$, we have
	\[
	v_\kappa'(r)
	=
	\kappa^{-1}r^\alpha
	e^{-\frac{r^{\alpha+1}}{\kappa(\alpha+1)}}
	\le \kappa^{-1}r^\alpha,
	\quad \text{for } 0<r\le e^{-R}.
	\]
	Moreover, since $\eta_R(0)=1$ and $1-\eta_R(r)\le C_1e^Rr$ which follows from the mean value theorem,  using $1-e^{-t}\le t$ for $t\ge0$,
	\[
	|v_\kappa(0)-v_\kappa(r)|
	=
	1-e^{-\frac{r^{\alpha+1}}{\kappa(\alpha+1)}}
	\le
	\frac{r^{\alpha+1}}{\kappa(\alpha+1)}.
	\]
	Hence
	\begin{align}
		|v'_{\kappa,R}|
		&=
		\left|
		\eta_R'(r)(v_\kappa(0)-v_\kappa(r))
		+(1-\eta_R(r))v_\kappa'(r)
		\right| \notag\\
		&\le
		\frac{C_1}{\alpha+1}\kappa^{-1}e^Rr^{\alpha+1}
		+
		C_1\kappa^{-1}e^Rr^{\alpha+1} \notag \\
		&\le
		\bar C_1 \kappa^{-1}e^{-\alpha R}, \label{I_0Lem4.3u'}
	\end{align}
	where we used $r\le e^{-R}$, $\alpha+1>0$,
	$\kappa=R^{\alpha+1}$, and
	$
	\bar C_1:=\left(1+\frac{1}{\alpha+1}\right)C_1 >0.
	$
	
	Using the volume estimate \eqref{inte_ine} and \eqref{I_0Lem4.3u'}, we obtain
	\begin{align*}
		A^{(0)}_R & =\int_{B_{e^{-R}}(\mathbf{0})} |v'_{\kappa, R}|^{p}\, r^{bps} {\dd}x \\
		&\leq
		\bar{C}_1^p \bar\varrho^{n+1}
		\mathscr{I}_{\m}(\mathbf{0})
		\kappa^{-p} e^{-p\alpha R}
		\int_0^{e^{-R}}
		r^{bps+n-1}(r+\bar\varrho)^{-(n+1)}\,{\dd}r \\
		&\le
		\bar{C}_1^p\mathscr{I}_{\m}(\mathbf{0})
		\kappa^{-p}
		e^{-(p\alpha+bps+n)R}.
	\end{align*}
	Since $\alpha>-1$, $p<n$, and $bps \geq 0$, we have
	$
	-(p\alpha+bps+n) <0.
	$
	Thus, together with \eqref{exp_uiiHPW_ineq}, it yields
	\[
	\lim_{R\to+\infty}\frac{A^{(0)}_R}{A}=0.
	\]
	
	Similarly,
	\begin{align*}
		\tilde{A}^{(0)}_R & = \int_{B_{e^{-R}}(\mathbf{0})} |v_{\kappa, R}|^{p}\, r^{(1-b)p's} {\dd}x \\
		&\leq
		{\bar{\varrho}}^{n+1}\mathscr{I}_{\m}(\mathbf{0})
		\int^{e^{-R}}_0 |v_{\kappa, R}|^{p}
		r^{(1-b)p's+n-1}
		(r+\bar{\varrho})^{-(n+1)}
		\,{\dd}r \\
&\leq \mathscr{I}_{\m}(\mathbf{0})
		\int^{e^{-R}}_0
		r^{(1-b)p's+n-1}
		\,{\dd}r
		\leq
		\mathscr{I}_{\m}(\mathbf{0})
		e^{-[(1-b)p's+n]R},
	\end{align*}
	and
	\begin{align*}
		V^{(0)}_R  & = \int_{B_{e^{-R}}(\mathbf{0})} |v_{\kappa,R,0}|^{p-1}r^s
		|v'_{\kappa,R,0}| {\dd}x \leq \int_{B_{e^{-R}}(\mathbf{0})} r^s
		|v'_{\kappa,R,0}| {\dd}x\\
		&\leq
		{\bar{\varrho}}^{n+1}\bar{C}_1
		\mathscr{I}_{\m}(\mathbf{0})
		\kappa^{-1} e^{-\alpha R}
		\int^{e^{-R}}_0
		r^{s+n-1}(r+\bar\varrho)^{-(n+1)}
		\,{\dd}r \\
		&\leq
		\bar{C}_1\mathscr{I}_{\m}(\mathbf{0})
		\kappa^{-1}
		e^{-(\alpha+s+n)R}.
	\end{align*}
	Since $(1-b)p's+n>0$ and
	\[
	\alpha+s+n > s+n-1 > 0,
	\]
	together with \eqref{exp_uiiHPW_ineq}, it follows that
	\begin{align*}
0 \leq \lim_{R\to+\infty}
	\frac{\tilde A^{(0)}_R}{\kappa^p A}
	& \leq\lim_{R\to+\infty}  \frac{e^{-[(1-b)p's+n]R}}{C  \hat\varrho^{n+1} R^{p\alpha + bps -1}  } =0,  \\
0 \leq \lim_{R\to+\infty}
	\frac{V^{(0)}_R}{\kappa^{p-1}A}
	&\leq  \lim_{R\to+\infty}  \frac{\bar{C}_1
		e^{-(\alpha+s+n)R}}{C  \hat\varrho^{n+1} R^{p\alpha + bps -1}  } =0.
	\end{align*}
This completes the proof.
\end{proof}

\begin{lemma}\label{lem:I1_estimatesHPW}
With the notation of Theorem \ref{thm:HPW_sharp}, we have
\begin{align}\label{A/A1/VHPW}
\lim_{R\to +\infty} \frac{A^{(1)}_R}{A}=\lim_{R\to +\infty} \frac{\tilde A^{(1)}_R }{\kappa^p A}=\lim_{R\to +\infty}  \frac{ V^{(1)}_R}{\kappa^{p-1} A}=1.
\end{align}
\end{lemma}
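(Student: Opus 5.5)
On $I_1=[e^{-R},e^R]$ neither cut-off is active: $\eta_R\equiv 0$ there because $r\ge e^{-R}$, and $\varphi_R\equiv1$ because $r\le e^R$. Hence $v_{\kappa,R}=v_\kappa$ and $v'_{\kappa,R}=v'_\kappa$ on $I_1$, and the plan follows the proof of Lemma \ref{lem:I1_estimates}. The only new feature is that the pointwise identity \eqref{threeintegralHPW} now carries the factors $\kappa^{-p}$ and $\kappa^{-(p-1)}$.

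First, from \eqref{HPWv_kappa'} and \eqref{HPWalpha+1} I will record the pointwise relation
\[
r^{bps}|v'_\kappa|^p=\kappa^{-p}|v_\kappa|^p r^{(1-b)p's}=\kappa^{-(p-1)}|v_\kappa|^{p-1}r^s|v'_\kappa|.
\]
Integrating this over the annulus $B_{e^R}(\mathbf{0})\setminus B_{e^{-R}}(\mathbf{0})$ gives
\[
A^{(1)}_R=\frac{\tilde A^{(1)}_R}{\kappa^p}=\frac{V^{(1)}_R}{\kappa^{p-1}}.
\]
So all three limits in \eqref{A/A1/VHPW} reduce to the single statement $A^{(1)}_R/A\to1$.

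Second, I decompose the full integral $A$ by polar coordinates around $\mathbf{0}$:
\[
A-A^{(1)}_R=\int_{\Omega\setminus B_{e^R}(\mathbf{0})} r^{bps}F^{*p}({\dd}v_\kappa)\,{\dd}x+\int_{B_{e^{-R}}(\mathbf{0})} r^{bps}F^{*p}({\dd}v_\kappa)\,{\dd}x .
\]
Here I use $F^{*}({\dd}v_\kappa)=v'_\kappa$. The sets $\{r\ge e^R\}$ and $\{r\le e^{-R}\}$ are exactly the complements of the annulus, and the spheres $\{r=e^{\pm R}\}$ have measure zero.

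Third, I divide by $A$ and invoke \eqref{eqlemma_uiiiHPW} and \eqref{eqlemma_uivHPW} from Lemma \ref{lemma_u_exp_HPW}, which show that both ratios tend to $0$. Hence $(A-A^{(1)}_R)/A\to0$, i.e. $A^{(1)}_R/A\to1$, and the identity from the first step gives the remaining two limits.

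The only delicate point is that $\kappa=R^{\alpha+1}$, and hence $v_\kappa$ itself, depends on $R$. So "$A$" is really a family $A(R)$, and the limits are ratios of $R$-dependent quantities rather than convergence to a fixed integral. This is harmless here because Lemma \ref{lemma_u_exp_HPW}\eqref{exp_uiiiHPW}--\eqref{exp_uivHPW} are already stated as ratio limits with the same $\kappa=R^{\alpha+1}$ and are proved with the quantitative lower bound \eqref{exp_uiiHPW_ineq}. I would check only that the finiteness in \eqref{exp_uiHPW} makes the subtraction legitimate.
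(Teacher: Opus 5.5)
Your proposal is correct and matches the paper's proof essentially step for step. On $I_1$ the cut-offs are inactive, so $v_{\kappa,R}=v_\kappa$; the pointwise identity gives $A^{(1)}_R=\kappa^{-p}\tilde A^{(1)}_R=\kappa^{1-p}V^{(1)}_R$; and $A-A^{(1)}_R$ splits into the outer and inner pieces, which vanish relative to $A$ by \eqref{eqlemma_uiiiHPW} and \eqref{eqlemma_uivHPW} of Lemma~\ref{lemma_u_exp_HPW}. Your remark that $A$ depends on $R$ through $\kappa=R^{\alpha+1}$ is accurate, and it causes no trouble because those limits are stated as ratios with the same $\kappa$.
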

\begin{proof}
On $I_1=[e^{-R}, e^{R}]$, $v_{\kappa,R}=v_\kappa$, with $v'_{\kappa,R}=v'_\kappa$.
Hence
\begin{align*}
A - A^{(1)}_R & = \int_{S_{\mathbf{0}}\Omega}\left( \int^{+\infty}_{e^{R}} r^{bps} |v'_{\kappa}|^p \hat{\sigma}_{\mathbf{0}}(r,y)\ {\dd}r \right){\dd}\nu_{\mathbf{0}}(y)
+ \int_{S_{\mathbf{0}}\Omega}\left( \int^{e^{-R}}_0 r^{bps} |v'_{\kappa}|^p \hat{\sigma}_{\mathbf{0}}(r,y)\ {\dd}r \right){\dd}\nu_{\mathbf{0}}(y) \\
& =\int_{\Omega \setminus B_{e^{R}}(\mathbf{0})} r^{bps} F^{*p}({\dd}v_\kappa)\, {\dd}x + \int_{B_{e^{-R}}(\mathbf{0})} r^{bps} F^{*p}({\dd}v_\kappa)\, {\dd}x,
\end{align*}
which together with \eqref{eqlemma_uiiiHPW} and \eqref{eqlemma_uivHPW} in Lemma \ref{lemma_u_exp_HPW} yields
\begin{align*}
\lim_{R\rightarrow +\infty} \left( \frac{A - A^{(1)}_R}{A} \right) =0 \quad \Rightarrow \quad \lim_{R\rightarrow +\infty} \left( \frac{ A^{(1)}_R}{A} \right) =1.
\end{align*}
Since $A^{(1)}_R  = \frac{1}{\kappa^p} \tilde{A}^{(1)}_R = \frac{1}{\kappa^{p-1}} V^{(1)}_R$, it follows that \eqref{A/A1/VHPW} holds.
\end{proof}

\begin{lemma}\label{lem:I2_estimatesHPW}
With the notation of Theorem \ref{thm:HPW_sharp}, we have
\[
\lim_{R\to +\infty} \frac{A^{(2)}_R}{A}=\lim_{R\to +\infty} \frac{\tilde A^{(2)}_R}{\kappa^p A}=\lim_{R\to +\infty} \frac{V^{(2)}_R}{ \kappa^{p-1} A }=0.
\]
\end{lemma}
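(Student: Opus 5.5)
On $I_2=[e^R,e^R+1]$ we have $v_{\kappa,R}=\varphi_R v_\kappa$. The plan mirrors Lemma \ref{lem:I2_estimates}, but uses the exponential decay of $v_\kappa$ in place of power decay. The key observation is that for $r\ge e^R$ and $\kappa=R^{\alpha+1}$ the exponent $\frac{r^{\alpha+1}}{\kappa(\alpha+1)}\ge \frac{(e^R/R)^{\alpha+1}}{\alpha+1}$ is super-exponentially large in $R$. Every integrand on $I_2$ therefore carries a factor that kills any power of $R$ or $e^{R}$.

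\textbf{Pointwise bounds on $I_2$.} Since $|\varphi_R|\le 1$ and $|\varphi_R'|\le C_2$, for $r\in I_2$ we have
\[
|v_{\kappa,R}|\le e^{-\frac{r^{\alpha+1}}{\kappa(\alpha+1)}},\qquad |v'_{\kappa,R}|\le \bigl(\kappa^{-1}r^{\alpha}+C_2\bigr)e^{-\frac{r^{\alpha+1}}{\kappa(\alpha+1)}}\le \tilde C\, r^{|\alpha|}e^{-\frac{r^{\alpha+1}}{\kappa(\alpha+1)}},
\]
using $\kappa>1$ and $r\ge e^R>1$. Write $E_R:=\exp\bigl(-\frac{(e^R/R)^{\alpha+1}}{\alpha+1}\bigr)$, which bounds the exponential factor on $I_2$ because $\alpha+1>0$.

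\textbf{Integral estimates.} Apply the upper bound in \eqref{inte_ine} over $B_{e^R+1}(\mathbf 0)\setminus B_{e^R}(\mathbf 0)$, and bound $(r+\bar\varrho)^{-(n+1)}\le r^{-(n+1)}$ together with $r\le e^R+1\le 2e^R$. This gives
\[
A^{(2)}_R\le C\,\mathscr I_{\m}(\mathbf 0)\,e^{c_1R}E_R^{\,p},\qquad \tilde A^{(2)}_R\le C\,\mathscr I_{\m}(\mathbf 0)\,e^{c_2R}E_R^{\,p},\qquad V^{(2)}_R\le C\,\mathscr I_{\m}(\mathbf 0)\,e^{c_3R}E_R^{\,p},
\]
with constants $c_i$ depending only on $p,b,s,\alpha$ (for instance $c_1=\max\{0,\,p|\alpha|+bps-2\}$, and similarly for the others). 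The interval has length $1$, so no integration is needed beyond this sup bound.

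\textbf{Comparison with $A$.} By \eqref{exp_uiiHPW_ineq}, $\kappa^{p}A\ge C\,\mathscr I_{\m}(\mathbf 0)R^{p\alpha+bps-1}$. Hence $A\ge C\mathscr I_{\m}(\mathbf 0)R^{-p(\alpha+1)+p\alpha+bps-1}$, $\kappa^pA\ge C\mathscr I_{\m}(\mathbf 0)R^{p\alpha+bps-1}$, and $\kappa^{p-1}A\ge C\mathscr I_{\m}(\mathbf 0)R^{-(\alpha+1)+p\alpha+bps-1}$. All three lower bounds are powers of $R$. Dividing gives ratios bounded by $R^{c}e^{c_iR}E_R^{\,p}$. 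Since $\log E_R=-\frac{1}{\alpha+1}e^{(\alpha+1)R}R^{-(\alpha+1)}$ dominates $c_iR+c\log R$, each ratio tends to $0$.

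The only step needing care is the last one: confirming that $\alpha+1>0$ (from \eqref{condition_b*HPW1}), so that $E_R$ really decays super-exponentially regardless of the sign of $\alpha$. Negative $\alpha$ only affects the polynomial factor $r^{\alpha}$, which is why it is absorbed crudely via $r^{|\alpha|}$.
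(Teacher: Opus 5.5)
Your proposal is correct and follows essentially the same route as the paper: on $I_2$ you bound $v_{\kappa,R}$ and $v'_{\kappa,R}$ pointwise by the super-exponentially small factor $\exp\bigl(-e^{(\alpha+1)R}/(\kappa(\alpha+1))\bigr)$ times at most exponential growth in $R$, then integrate using the upper bound in \eqref{inte_ine}, and finally compare with the polynomial lower bound \eqref{exp_uiiHPW_ineq}. Your version is slightly more explicit than the paper's, since you work out the $\tilde A^{(2)}_R/(\kappa^pA)$ and $V^{(2)}_R/(\kappa^{p-1}A)$ cases with the correct normalizations, where the paper only says ``the same argument applies.''
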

\begin{proof}
On \(I_2=[e^{R},e^R+1]\), \(v_{\kappa,R}=v_\kappa\varphi_R\), with \(|\varphi_R'|\le C_2\), for some constant $C_2>0$. Since \( |v_\kappa| \) decays exponentially as \(e^{-\frac{r^{\alpha+1}}{\kappa(\alpha+1)}}\) = \(e^{-\frac{r^{\alpha+1}}{R^{\alpha+1}(\alpha+1)}}\), $R>2$, and $\alpha+1 >0$, we have
\[
|v_{\kappa,R}(r)|\le e^{-\frac{e^{(\alpha+1)R}}{\kappa(\alpha+1)}} ,\qquad
|v'_{\kappa,R}(r)|\le \left( \frac{r^\alpha}{R^{\alpha+1}} +C_2 \right) e^{-\frac{e^{(\alpha+1)R}}{\kappa(\alpha+1)}} < \left(e^{(\alpha+1)R}+C_2 \right) e^{-\frac{e^{(\alpha+1)R}}{\kappa(\alpha+1)}}.
\]
Thus
\begin{align*}
A^{(2)}_R & = \int_{B_{e^R+1}(\mathbf{0}) \backslash B_{e^R}(\mathbf{0}) } |v'_{\kappa, R, 2}|^{p}\, r^{bps} {\dd}x
 \\
& \leq  \left(e^{(\alpha+1)R}+C_2 \right)^p e^{-\frac{p e^{(\alpha+1)R}}{\kappa(\alpha+1)}}  \int_{B_{e^R+1}(\mathbf{0}) \backslash B_{e^R}(\mathbf{0}) } r^{bps} {\dd}x,
\end{align*}
which, combined with \eqref{inte_ine}, yields
\begin{align*}
A^{(2)}_R
& \leq {\bar{\varrho}}^{n+1} \mathscr{I}_{\m}(\mathbf{0})  \left(e^{(\alpha+1)R}+C_2 \right)^p e^{-\frac{p e^{(\alpha+1)R}}{\kappa(\alpha+1)}} \int^{e^R+1}_{e^R}   r^{bps + n-1} (r+\bar{\varrho})^{-(n+1)} {\dd}r\\
& \leq \mathscr{I}_{\m}(\mathbf{0})  \left(e^{(\alpha+1)R}+C_2 \right)^p e^{-\frac{p e^{(\alpha+1)R}}{\kappa(\alpha+1)}} (e^R+1)^{bps + n-1}.
\end{align*}
Together with \eqref{exp_uiiHPW_ineq}, it implies $\lim_{R\rightarrow +\infty} \frac{A^{(2)}_R}{A} =0$
because the exponential term decays faster than any positive power of \(R\).
The same argument applies to \(\tilde A^{(2)}_R\) and \(V^{(2)}_R\).
\end{proof}

\vskip 10mm

\section{Failure of the generalized CKN and HPW inequalities
}\label{section:K=0}

This section is devoted to the failure of classical functional inequalities on projectively flat Finsler manifolds with zero flag curvature, i.e.,
Theorem \ref{thm:CKNHPW_threshold}\eqref{1.2(ii)}.
It complements the results of Section \ref{sec:sharp} and \ref{SharpHPW}, where the validity of the generalized CKN and HPW inequalities was established for
$s>2$. Together with Section \ref{sec:sharp}  and \ref{SharpHPW}, this provides a complete characterization of the parameter ranges for which the generalized CKN and HPW inequalities hold with sharp constant or fail.
As consequences, we also obtain the failure of the
$L^p$-Hardy inequality and the generalized uncertainty principle in the same setting.

To avoid constructing compactly supported function sequences via cut-off functions as in the previous two sections (where such a construction was unavoidable), we first introduce a functional and prove Lemma \ref{lemma:Cpab_well}. This will make the proof of Theorem~\ref{thm:CKNHPW_threshold}\eqref{1.2(ii)} simpler. Of course, one could also prove the theorem by directly constructing a compactly supported sequence, but the present approach is technically simpler.

\begin{definition}\label{def:C_pab}
Let $(\Omega, F, \mathscr{L}^n)$ be a forward complete projectively flat Finsler manifold with $\mathbf{K}=0$  endowed with the Lebesgue measure.
Let $a>1$, $p>1$, and $b\ge 0$. Assume
$1-n < s \le 2$, $0 \le b \leq \frac{p+1}{2p}$, with
\[
bps+n>0,\qquad (1-b)p's+n>0.
\]
\noindent
Define a functional on $\Lip_0(\Omega)\backslash\{0\}$ by
\[
\mathscr{C}_{p,a,b,s}(f):= \frac{\left( \int_{\Omega} {r^{b p s} } {F}^{* p} ( {\dd}f)\, {\dd}x   \right)^{1/p}\left( \int_{\Omega}  {|f|^{p'(a-1)}}{r^{(1-b) p' s}}\,{\dd}x   \right)^{1/p'}  }{ \int_{\Omega}  {|f|^{a}}{r^{s-1}}\,{\dd}x  }.
\]
\end{definition}

\begin{lemma}\label{lemma:Cpab_well}
Under the conditions of Definition \ref{def:C_pab}, the functional $\mathscr{C}_{p,a,b,s}$ is well-defined on $\Lip_0(\Omega)\backslash\{0\}$, and
\begin{equation}
\inf_{f\in C_0^\infty(\Omega)\setminus\{0\}} \mathscr{C}_{p,a,b,s}(f)
=
\inf_{f\in \Lip_0(\Omega)\setminus\{0\}} \mathscr{C}_{p,a,b,s}(f). \label{Lipf=Cf}
\end{equation}
\end{lemma}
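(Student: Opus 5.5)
The argument has two parts. First I show that the three integrals in $\mathscr{C}_{p,a,b,s}(f)$ are finite for every $f\in\Lip_0(\Omega)$, and that the denominator is positive when $f\not\equiv0$. Then I prove \eqref{Lipf=Cf} by approximating with the smooth functions from Lemma~\ref{lipsconverppax}.

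\textbf{Well-definedness.} Fix $f\in\Lip_0(\Omega)\setminus\{0\}$ with $\supp f\subset B_{R_0}(\mathbf{0})$. Then $|f|\le C$, and Rademacher's theorem gives $F^*({\dd}f)\le \dil(f)$ almost everywhere. Each integral is therefore bounded by a constant times $\int_{B_{R_0}(\mathbf{0})} r^{\beta}\,{\dd}x$, with $\beta\in\{bps,(1-b)p's,s-1\}$. By \eqref{inte_ine} this is at most
\[
\mathscr{I}_{\m}(\mathbf{0})\,\bar\varrho^{\,n+1}\int_0^{R_0} r^{\beta+n-1}(r+\bar\varrho)^{-(n+1)}\,{\dd}r,
\]
which is finite exactly when $\beta+n>0$. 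The three cases hold as follows.
\begin{itemize}
\item $bps+n>0$ and $(1-b)p's+n>0$ are hypotheses of Definition~\ref{def:C_pab}.
\item $s-1+n>0$ is equivalent to $s>1-n$.
\end{itemize}
The denominator is positive because $f\not\equiv0$ is continuous, so $|f|^a r^{s-1}>0$ on an open set. The ratio is thus a well-defined number in $[0,\infty)$.

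\textbf{The two infima.} Since $C_0^\infty(\Omega)\setminus\{0\}\subset\Lip_0(\Omega)\setminus\{0\}$, the inequality ``$\ge$'' in \eqref{Lipf=Cf} is immediate. For ``$\le$'', take $f\in\Lip_0(\Omega)\setminus\{0\}$ and apply Lemma~\ref{lipsconverppax} to get $u_k\in C_0^\infty(\Omega)$ with the following properties:
\begin{itemize}
\item $\supp u_k\subset\mathscr K$ for a fixed compact set $\mathscr K$;
\item $|u_k|\le C$ and $F^*({\dd}u_k)\le C$;
\item $u_k\rightrightarrows f$ uniformly;
\item $\|u_k-f\|_{W^{1,p}_{\m}}+\|f-u_k\|_{W^{1,p}_{\m}}\to0$.
\end{itemize}
The plan is to show that each of the three integrals for $u_k$ converges to the corresponding integral for $f$. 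Then $\mathscr{C}(u_k)\to\mathscr{C}(f)$, with $u_k\neq0$ for large $k$, and taking the infimum over $f$ gives the result.

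\textbf{Convergence of the zeroth-order integrals.} For $\tilde A$ and the denominator, the integrands $|u_k|^{p'(a-1)}r^{(1-b)p's}$ and $|u_k|^ar^{s-1}$ converge pointwise. They are dominated by $C' r^{\beta}\mathbf 1_{\mathscr K}$, which is integrable by the computation above. Dominated convergence therefore applies.

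\textbf{Convergence of the gradient integral (main obstacle).} The difficulty is that $F^*$ is not symmetric, so $W^{1,p}$ convergence gives $F^*({\dd}u_k-{\dd}f)\to0$ and $F^*({\dd}f-{\dd}u_k)\to0$ only in $L^p({\dd}x)$, without the weight $r^{bps}$. Here is how I would handle it.
\begin{enumerate}
\item Pass to a subsequence along which ${\dd}u_k\to{\dd}f$ almost everywhere, using the $L^p$ convergence of $F^*({\dd}u_k-{\dd}f)$.
\item Note that $F^*$ is continuous, so $r^{bps}F^{*p}({\dd}u_k)\to r^{bps}F^{*p}({\dd}f)$ almost everywhere.
\item Dominate the integrand by $C^p r^{bps}\mathbf 1_{\mathscr K}$, which is integrable because $bps+n>0$; this integrability is exactly where that hypothesis enters.
\item Conclude by dominated convergence. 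Passing to a subsequence is harmless, since we only need some sequence along which $\mathscr{C}(u_k)\to\mathscr{C}(f)$.
\end{enumerate}
The conditions $s\le2$ and $b\le\frac{p+1}{2p}$ are not needed in this lemma; only the integrability conditions $bps+n>0$, $(1-b)p's+n>0$ and $s>1-n$ are used.
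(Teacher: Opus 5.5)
Your proposal is correct and follows the paper's proof essentially step by step. Both use the bound $F^*({\dd}f)\le\dil(f)$ a.e. together with \eqref{inte_ine} for well-definedness, and both use the approximation from Lemma~\ref{lipsconverppax} with dominated convergence for the zeroth-order integrals. The only real variation is the weighted gradient term: the paper upgrades $F^*({\dd}h_k)\to F^*({\dd}h)$ in $L^p$ to $L^1$-convergence of the $p$-th powers and then truncates near the origin using local integrability of $r^{bps}$, whereas your a.e.-convergent subsequence dominated by $C^p r^{bps}\mathbf{1}_{\mathscr K}$ reaches the same conclusion a bit more cleanly.
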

\begin{proof}
We first show that $\mathscr{C}_{p,a,b,s}$ is well-defined on $\Lip_0(\Omega)\backslash\{0\}$. Let $f\in \Lip_0(\Omega)\backslash\{0\}$ with support contained in $B_R(\mathbf{0})$ for some $R>1$.
A direct calculation combined with \eqref{dualff*} yields
\begin{align*}
\limsup_{z\rightarrow x}\frac{|f(z)-f(x )|}{d_F(x ,z)}&=\sup_{y\in S_x\Omega}\left(\lim_{t\rightarrow 0^+}\frac{|f(\gamma_y(t))-f(\gamma_y(0))|}{t}\right) \\
&=\sup_{y\in S_x\Omega}|\langle {\dd}f, y\rangle|=\max\{  F^*(x,{\dd}f), F^*(x,-{\dd}f) \},
\end{align*}
where $\gamma_y(t)$ denotes a unit-speed geodesic starting at $x$ with $\dot{\gamma}_y(0)=y$. Then
for $\mathscr{L}^n$-a.e. $x\in \Omega$, we have
\[
F^*(x,{\dd}f) \le \limsup_{z\to x}\frac{|f(z)-f(x)|}{d_F(x,z)} \le \dil(f) < +\infty,
\]
where $\dil(f)$ is the Lipschitz constant of $f$.
Then, since $\m(\supp f)<+\infty$ and \eqref{inte_ine},
\[ \int_\Omega r^{bps} F^{*p}({\dd}f)\, {\dd}x  \leq \dil(f)^{p} \int_{B_R(\mathbf{0})} r^{bps} \, {\dd}x < +\infty, \]
The latter integral is finite because $bps + n > 0$ and near $r=0$ the volume element behaves like $r^{n-1}{\dd}r$, so the integral converges near $0$.

For the second numerator integral,
\[
\int_\Omega |f|^{p'(a-1)} r^{(1-b)p's}\, {\dd}x
\le (\max |f|)^{p'(a-1)} \int_{B_R(\mathbf{0})} r^{(1-b)p's}\, {\dd}x.
\]
We check that this integral is finite. The assumption $(1-b)p's+n>0$ ensures convergence near $r=0$ (the volume element behaves like $r^{n-1}{\dd}r$); outside $0$ the integral is finite because the support is compact. Hence the integral is finite.

Similarly, for the denominator,
\[
\int_\Omega |f|^a r^{s-1}\, {\dd}x
\le (\max |f|)^a \int_{B_R(\mathbf{0})} r^{s-1}\, {\dd}x < +\infty,
\]
because  the assumption $s>1-n$  ensures integrability near $r=0$. Thus $\mathscr{C}_{p,a,b,s}(f)$ is finite and positive.

It remains to prove the equality of the infima. Since $C_0^\infty(\Omega)\subset \Lip_0(\Omega)$, we trivially have
\[
\inf_{f\in C_0^\infty}\mathscr{C}_{p,a,b,s}(f) \ge \inf_{f\in \Lip_0}\mathscr{C}_{p,a,b,s}(f).
\]
For the reverse inequality, let $h\in \Lip_0(\Omega)\backslash\{0\}$ be arbitrary. By Lemma \ref{lipsconverppax} (or by standard mollification, since the support is compact), there exists a sequence $\{h_k\}_{k\ge1}\subset C_0^\infty(\Omega)$ such that:
\begin{itemize}
\item $\operatorname{supp}(h_k)\subset B_{R+1}(\mathbf{0})$ for all sufficiently large $k$;
\item $h_k \to h$ uniformly on $\Omega$;
\item $F^*({\dd}h_k) \to F^*({\dd}h)$ in $L^p(\Omega)$.
\end{itemize}

We now show the convergence of the three integrals in the definition of $\mathscr{C}_{p,a,b,s}$. For the first integral, since the supports of $h_k$ and $h$ are contained in the common bounded set $B_{R+1}(\mathbf{0})$, and $r^{bps}$ is locally integrable on $B_{R+1}(\mathbf{0})$ (this follows from $bps+n>0$), we first note that, by the triangle inequality for $F^*$,
\[
F^*({\dd}h_k)\leq F^*({\dd}h)+F^*({\dd}h_k-{\dd}h),
\qquad
F^*({\dd}h)\leq F^*({\dd}h_k)+F^*({\dd}h-{\dd}h_k).
\]
Thus, by Lemma \ref{lipsconverppax}, $F^*({\dd}h_k)\to F^*({\dd}h) \text{ in } L^p(\Omega)$, and hence
\[
F^{*p}({\dd}h_k)\to F^{*p}({\dd}h)
\quad\text{in }L^1(B_{R+1}(\mathbf{0})).
\]
Since \(\{F^*({\dd}h_k)\}\) is uniformly bounded on the common support by Lemma~\ref{lipsconverppax}, the dominated convergence theorem yields the \(L^1\)-convergence of the \(p\)-th powers.
 Therefore, together with the local integrability of $r^{bps}$, a standard truncation argument yields
\[
\int_\Omega r^{bps} F^{*p}({\dd}h_k)\,{\dd}x \to \int_\Omega r^{bps} F^{*p}({\dd}h)\,{\dd}x.
\]

For the second integral, the uniform convergence $h_k\to h$ gives pointwise convergence of the integrands, and the sequence is uniformly bounded on the compact support; hence by the dominated convergence theorem,
\[
\int_\Omega |h_k|^{p'(a-1)} r^{(1-b)p's}\, {\dd}x
\to \int_\Omega |h|^{p'(a-1)} r^{(1-b)p's}\, {\dd}x.
\]
Similarly,
\[
\int_\Omega |h_k|^a r^{s-1}\, {\dd}x \to \int_\Omega |h|^a r^{s-1}\, {\dd}x.
\]
Therefore, $\mathscr{C}_{p,a,b,s}(h_k) \to \mathscr{C}_{p,a,b,s}(h)$ as $k\to +\infty$. Consequently, for every $h\in \Lip_0$,
\[
\inf_{f\in C_0^\infty}\mathscr{C}_{p,a,b,s}(f) \le \lim_{k\to +\infty}\mathscr{C}_{p,a,b,s}(h_k)=\mathscr{C}_{p,a,b,s}(h),
\]
so taking the infimum over $h$ gives
\[
\inf_{f\in C_0^\infty}\mathscr{C}_{p,a,b,s}(f) \le \inf_{f\in \Lip_0}\mathscr{C}_{p,a,b,s}(f).
\]
Combining the two inequalities yields \eqref{Lipf=Cf}.
\end{proof}

We are now ready to prove the failure result for the generalized CKN and HPW inequalities when $s\le2$.
The argument is explicit: we construct a family of test functions $w_\iota$ and show that the corresponding quotient in the definition of
$\mathscr{C}_{p,a,b,s}$ tends to $0$ as $\iota\to 0^+$. The volume estimates from Lemma \ref{lem_hat_sigma} and the approximation result from Lemma \ref{lemma:Cpab_well} will be used below.
We note that the following result holds even for $a=p$ (HPW), because it does not need the condition $a>p>1$ (CKN) as the theorems in Section \ref{sec:sharp}.


\begin{proof}[Proof of Theorem \ref{thm:CKNHPW_threshold}\eqref{1.2(ii)}]
	Let $(r,y)$ be the polar coordinate system around $\mathbf{0}$.
	Define a function $w_\iota$ on $\Omega$ by
	\[
	w_\iota(x) := -e^{-\iota r^{1 + \frac{\mu}{p}}},
	\]
	with constants $\iota, \mu$ satisfying
	\begin{equation}\label{condi_iota}
		0 < \iota < \min\left\{a^{-1}, a-1, p^{-1}, e^{-1} \right\}, \qquad
		 \mu = \frac{3}{2}.
	\end{equation}
	
	For the quotient $\mathscr{C}_{p,a,b,s}$, we need to estimate the integrals in it: two from the numerator and one from the denominator.
	
	The rest of the proof is organized into four steps. In Step 1 we estimate the denominator. In Step 2 we estimate the first numerator integral, which splits into Cases a1 and a2 according to the sign of \(s\). In Step 3 we estimate the second numerator integral, which splits into Cases b1 and b2 according to the exponent of \(r\) in the tail integral. In Step 4 we pass from the non-compactly supported functions \(w_\iota\) to compactly supported functions using Lemma \ref{lemma:Cpab_well}.
	
	\paragraph{\emph{Step 1: Denominator estimate.}}
	By the left inequality in \eqref{inte_ine}, we have
	\begin{align}
		\int_{\Omega} {|w_{\iota}|^a}{r^{s-1}}{\dd}x
		&\geq
		{\hat{\varrho}}^{n+1}\mathscr{I}_{\m}(\mathbf{0})
		\int^{+\infty}_0
		e^{-\iota a r^{1+\frac{\mu}{p}}}
		r^{n+s-2}(r+\hat{\varrho})^{-(n+1)}
		{\dd}r
		\notag\\
		&\geq
		{\hat{\varrho}}^{n+1}\mathscr{I}_{\m}(\mathbf{0})
		\int_{\frac12}^{\iota^{-\frac{p}{p+\mu}}} e^{-\iota a r^{1+\frac{\mu}{p}}}
		r^{s-3} \left(\frac{r}{r+\hat{\varrho}} \right)^{n+1}\,{\dd}r. \label{Thm5.2DenomEst}
\end{align}
For $\frac12\leq r\leq\iota^{-\frac{p}{p+\mu}},$
	we have
	\begin{align*}
	\iota r^{1+\frac{\mu}{p}}\leq1,
	\quad
	e^{-\iota a r^{1+\frac{\mu}{p}}}\geq e^{-a},
\quad
	\frac{r}{r+\hat{\varrho}}\geq \frac{\frac{1}{2}}{\frac{1}{2}+\hat{\varrho}} = \frac{1}{1+2\hat{\varrho}}.
	\end{align*}
	Moreover, since $s\leq2$, a direct estimate gives
	\[
	\int_{\frac12}^{1}r^{s-3}{\dd}r\geq\frac12,
	\qquad
	\int_1^{\iota^{-\frac{p}{p+\mu}}}r^{s-3}{\dd}r
	\geq
	\iota^{\frac{p(2-s)}{p+\mu}}
	\int_1^{\iota^{-\frac{p}{p+\mu}}}\frac{{\dd}r}{r}.
	\]
Then it follows from \eqref{Thm5.2DenomEst} that
\begin{align}
\int_{\Omega} {|w_{\iota}|^a}{r^{s-1}}{\dd}x
		&\geq
		\frac{{\hat{\varrho}}^{n+1}\mathscr{I}_{\m}(\mathbf{0})}
		{e^a(1+2\hat{\varrho})^{n+1}}
		\int_{\frac12}^{\iota^{-\frac{p}{p+\mu}}}
		r^{s-3}\,{\dd}r
		=
		\frac{{\hat{\varrho}}^{n+1}\mathscr{I}_{\m}(\mathbf{0})}
		{e^a(1+2\hat{\varrho})^{n+1}}
		\left(
		\int_{\frac12}^{1} r^{s-3}{\dd}r +
		\int_1^{\iota^{-\frac{p}{p+\mu}}} r^{s-3}{\dd}r
		\right)
		\notag\\
		&\geq
		\frac{{\hat{\varrho}}^{n+1}\mathscr{I}_{\m}(\mathbf{0})}
		{e^a(1+2\hat{\varrho})^{n+1}}
		\left(
		\frac12+
		\iota^{\frac{p(2-s)}{p+\mu}}
		\int_1^{\iota^{-\frac{p}{p+\mu}}}\frac{{\dd}r}{r}
		\right)
		\notag\\
		&=
		\frac{{\hat{\varrho}}^{n+1}\mathscr{I}_{\m}(\mathbf{0})}
		{e^a(1+2\hat{\varrho})^{n+1}}
		\left(
		\frac12+
		\frac{p}{p+\mu}
		\iota^{\frac{p(2-s)}{p+\mu}}
		(-\ln\iota)
		\right).
		\label{FinslerhARDdenome}
	\end{align}

	\medskip
	\paragraph{\emph{Step 2: First numerator integral.}}
	We now estimate the first numerator integral. Since \(F^*({\dd}r)=1\), \(\mu=3/2>1\), and \(bps+n>0\), together with \eqref{inte_ine}, a direct computation gives
	\begin{align}
		&\int_{\Omega} r^{bps} F^{*p}({\dd}w_{\iota}) {\dd}x\notag\\
		=& \, \iota^p \left(1+\frac{\mu}{p}\right)^p  \int_{\Omega} r^{\mu + bps} e^{- \iota p r^{1+\frac{\mu}{p}}} {\dd}x \notag\\
		\leq &\,  \iota^p \left(1+\frac{\mu}{p}\right)^p {\bar{\varrho}}^{n+1} \mathscr{I}_{\m}(\mathbf{0})  \left(   \int^{1}_0 r^{\mu+bps+n-1 } (r+\bar{\varrho})^{-(n+1)}  {\dd}r
		+ \int_1^{+\infty} e^{- \iota p r^{1+\frac{\mu}{p}}} r^{\mu+bps} (r+\bar{\varrho})^{-2}  {\dd}r \right) \notag \\
		\leq &\, \iota^p \left( 1 +\frac{\mu}{p}\right)^p {\bar{\varrho}}^{n+1} \mathscr{I}_{\m}(\mathbf{0})   \left( \frac{1}{\bar{\varrho}^{n+1}} +    \int_{1}^{+\infty}  e^{- \iota p r^{1+\frac{\mu}{p}}} r^{\mu +bps  -2}   {\dd}r\right). \label{K=0_v_numer_first0_general_lem}
	\end{align}
	It remains to estimate the tail integral
	\[
	\int_{1}^{+\infty}
	e^{- \iota p r^{1+\frac{\mu}{p}}}
	r^{\mu+bps-2}
	{\dd}r,
	\]
	which we treat in two cases according to the sign of $s$.
	
	\medskip
	\subparagraph{\bf{Case a1: $s \leq 0$.}}
	In this case, let $z:= \iota p r^{1+\frac{\mu}{p}}$.
	Since $\frac{p(\mu-2)-\mu}{p+\mu}>-1$, we have
	\begin{align*}
	&\quad	\int_{1}^{+\infty}
		e^{- \iota p r^{1+\frac{\mu}{p}}}
		r^{\mu+bps-2}{\dd}r
		\leq
		\int_{1}^{+\infty}
		e^{- \iota p r^{1+\frac{\mu}{p}}}
		r^{\mu-2}{\dd}r\\
		&\leq
		\iota^{\frac{p(1-\mu)}{p+\mu}}
		\frac{p^{\frac{p(2-\mu)+\mu}{p+\mu}}}{p+\mu}
		\int_0^{+\infty}
		e^{-z}
		z^{\frac{p(\mu-2)-\mu}{p+\mu}}
		{\dd}z
		=
		\iota^{\frac{p(1-\mu)}{p+\mu}}
		\frac{p^{\frac{p(2-\mu)+\mu}{p+\mu}}}{p+\mu}
		\Gamma\left(
		\frac{p(\mu-2)-\mu}{p+\mu}+1
		\right)\\
		&=:
		\iota^{\frac{p(1-\mu)}{p+\mu}}
		\mathcal{C}_1(p,\mu).
	\end{align*}
	Then it follows from \eqref{K=0_v_numer_first0_general_lem} and
	$\iota\in(0,1)$ that
	\begin{align*}
		\int_{\Omega}
		r^{bps}F^{*p}({\dd}w_\iota){\dd}x
		&\leq
		\iota^{\frac{p(p+1)}{p+\mu}}
		\left(
		1+{\bar{\varrho}}^{n+1}\mathcal{C}_1(p,\mu)
		\right)
		\mathscr{I}_{\m}(\mathbf{0})
		\left(1+\frac{\mu}{p}\right)^p\\
		&=:
		\iota^{\frac{p(p+1)}{p+\mu}}
		\mathcal{\bar C}_1(n,p,\mu,\bar{\varrho})
		\mathscr{I}_{\m}(\mathbf{0}).
	\end{align*}
	
	\medskip
	\subparagraph{\bf{Case a2: $s > 0$.}}
	In this case, again let
	$z:=\iota p r^{1+\frac{\mu}{p}}$.
	Since
	$\frac{p(\mu+bps-2)-\mu}{p+\mu}>-1$, we have
	\begin{align*}
		\int_{1}^{+\infty}
		e^{- \iota p r^{1+\frac{\mu}{p}}}
		r^{\mu+bps-2}{\dd}r
		&\leq
		\iota^{\frac{p(1-\mu-bps)}{p+\mu}}
		\frac{p^{\frac{p(2-\mu-bps)+\mu}{p+\mu}}}{p+\mu}
		\int_0^{+\infty}
		e^{-z}
		z^{\frac{p(\mu+bps-2)-\mu}{p+\mu}}
		{\dd}z\\
		&=
		\iota^{\frac{p(1-\mu-bps)}{p+\mu}}
		\frac{p^{\frac{p(2-\mu-bps)+\mu}{p+\mu}}}{p+\mu}
		\Gamma\left(
		\frac{p(\mu+bps-2)-\mu}{p+\mu}+1
		\right)\\
		&=:
		\iota^{\frac{p(1-\mu-bps)}{p+\mu}}
		\mathcal{C}_2(b,p,s,\mu).
	\end{align*}
	Then it follows from \eqref{K=0_v_numer_first0_general_lem}
	and $\iota\in(0,1)$ that
	\begin{align*}
		\int_{\Omega}
		r^{bps}F^{*p}({\dd}w_\iota){\dd}x
		&\leq
		\iota^{\frac{p(p+1-bps)}{p+\mu}}
		\left(
		1+{\bar{\varrho}}^{n+1}\mathcal{C}_2(b,p,s,\mu)
		\right)
		\mathscr{I}_{\m}(\mathbf{0})
		\left(1+\frac{\mu}{p}\right)^p\\
		&=:
		\iota^{\frac{p(p+1-bps)}{p+\mu}}
		\mathcal{\bar C}_2(n,b,p,s,\mu,\bar{\varrho})
		\mathscr{I}_{\m}(\mathbf{0}).
	\end{align*}
	
	Combining the two cases above, we obtain the following uniform estimate for the first numerator integral:
	\begin{align}\label{denominator1}
		\int_{\Omega}
		r^{bps}F^{*p}({\dd}w_\iota){\dd}x
		\leq
		\begin{cases}
			\iota^{\frac{p(p+1)}{p+\mu}}
			\mathcal{\bar C}_1(n,p,\mu,\bar{\varrho})
			\mathscr{I}_{\m}(\mathbf{0}),
			&\quad \text{if }s\leq0,\\[4pt]
			\iota^{\frac{p(p+1-bps)}{p+\mu}}
			\mathcal{\bar C}_2(n,b,p,s,\mu,\bar{\varrho})
			\mathscr{I}_{\m}(\mathbf{0}),
			&\quad \text{if }s>0.
		\end{cases}
	\end{align}
	where \(\mathcal{\bar{C}}_1\) and \(\mathcal{\bar{C}}_2\) are positive constants depending on the indicated parameters and on \(\bar{\varrho}\). In the sequel, any constant denoted by \(\mathcal{\bar{C}}\) or \(\mathcal C\) with a subscript is a positive constant depending only on the fixed parameters and geometric quantities; its value may change from line to line.
	
	\medskip
	\paragraph{\emph{Step 3: Second numerator integral.}}
	We now estimate the second numerator integral.
	By a direct calculation using \eqref{inte_ine}, we have
	\begin{align}
		\int_{\Omega}
		{|w_\iota|^{(a-1)p'}}r^{(1-b)p's}{\dd}x
		&\leq
		\bar{\varrho}^{n+1} \mathscr{I}_{\m}(\mathbf{0})
		\int_0^{+\infty}
		e^{-\iota(a-1)p'r^{1+\frac{\mu}{p}}}
		r^{(1-b)p's+n-1}
		(r+\bar{\varrho})^{-(n+1)}
		{\dd}r
		\notag\\
		&\leq
		\mathscr{I}_{\m}(\mathbf{0})
		\left(
		\int_0^1
		r^{(1-b)p's+n-1}{\dd}r
		+
		\bar{\varrho}^{n+1}
		\int_1^{+\infty}
		e^{-\iota(a-1)p'r^{1+\frac{\mu}{p}}}
		r^{(1-b)p's-2}{\dd}r
		\right)
		\notag\\
		&=
		\mathscr{I}_{\m}(\mathbf{0})
		\left(
		\frac{1}{(1-b)p's+n}
		+
		\bar{\varrho}^{n+1}
		\int_1^{+\infty}
		e^{-\iota(a-1)p'r^{1+\frac{\mu}{p}}}
		r^{(1-b)p's-2}{\dd}r
		\right).
		\label{K=0_v_numer_second1_general}
	\end{align}
	It remains to estimate the tail integral
	\[
	\int_1^{+\infty}
	e^{-\iota(a-1)p'r^{1+\frac{\mu}{p}}}
	r^{(1-b)p's-2}{\dd}r.
	\]
	We consider two cases according to the exponent of \(r\) in the integrand.
	
	\medskip
	\subparagraph{\bf{Case b1: $(1-b)p's-2\leq-1$.}}
	In this case, let
	$\bar z:=\iota(a-1)r^{1+\frac{\mu}{p}}$.
	By the exponent condition, we have
	\begin{align}
		\int_1^{+\infty}
		e^{-\iota(a-1)p'r^{1+\frac{\mu}{p}}}
		r^{(1-b)p's-2}{\dd}r
		&\leq
		\int_1^{+\infty}
		e^{-\iota(a-1)r^{1+\frac{\mu}{p}}}
		r^{-1}{\dd}r
		=
		\frac{p}{p+\mu}
		\int_{\iota(a-1)}^{+\infty}
		e^{-\bar z}\bar z^{-1}{\dd}\bar z
		\notag\\
		&\leq
		\frac{p}{p+\mu}
		\int_{\iota(a-1)}^1
		\bar z^{-1}{\dd}\bar z
		+
		\frac{p}{p+\mu}
		\int_1^{+\infty}
		e^{-\bar z}{\dd}\bar z
		\notag\\
		&=
		\frac{p}{p+\mu}
		\left[
		-\ln\iota-\ln(a-1)+e^{-1}
		\right].
		\label{K=0_v_numer_second2_general}
	\end{align}
	By the first inequality in \eqref{condi_iota},
	$-\ln\iota>-\ln(a-1)$ and
	$-\ln\iota>e^{-1}$, and hence
	\[
	\int_1^{+\infty}
	e^{-\iota(a-1)p'r^{1+\frac{\mu}{p}}}
	r^{(1-b)p's-2}{\dd}r
	\leq
	-\frac{3p}{p+\mu}\ln\iota.
	\]
	Then it follows from
	\eqref{K=0_v_numer_second1_general}
	and the first inequality in \eqref{condi_iota} that
	\begin{align}
		\int_{\Omega}
		{|w_\iota|^{(a-1)p'}}r^{(1-b)p's}{\dd}x
		&\leq
		(-\ln\iota)
		\left(
		\frac{1}{(1-b)p's+n}
		+
		\frac{3p{\bar{\varrho}}^{n+1}}{p+\mu}
		\right)
		\mathscr{I}_{\m}(\mathbf{0})
		\notag\\
		&=:
		(-\ln\iota)\,
		\mathcal{C}_3(b,p,s,\mu,\bar{\varrho})
		\mathscr{I}_{\m}(\mathbf{0}).
		\label{K=0_v_numer_second3_general}
	\end{align}

	Combining \eqref{denominator1} with
	\eqref{K=0_v_numer_second3_general}, we obtain
	\begin{align}\label{denominator2}
		&\left(
		\int_{\Omega}
		r^{bps}F^{*p}({\dd}w_\iota){\dd}x
		\right)^{1/p}
		\left(
		\int_{\Omega}
		{|w_\iota|^{(a-1)p'}}
		r^{(1-b)p's}{\dd}x
		\right)^{1/p'}
		\notag\\
		\leq&\,
		\begin{cases}
			\iota^{\frac{p+1}{p+\mu}}
			(-\ln\iota)^{\frac1{p'}}
			\mathscr{I}_{\m}(\mathbf{0})
			\mathcal{\bar C}_1^{\frac1p}
			(n,p,\mu,\bar{\varrho})
			\mathcal C_3^{\frac1{p'}}
			(b,p,s,\mu,\bar{\varrho}),
			&\quad\text{if }s\leq0,
			\\[5pt]
			\iota^{\frac{p+1-bps}{p+\mu}}
			(-\ln\iota)^{\frac1{p'}}
			\mathscr{I}_{\m}(\mathbf{0})
			\mathcal{\bar C}_2^{\frac1p}
			(n,b,p,s,\mu,\bar{\varrho})
			\mathcal C_3^{\frac1{p'}}
			(b,p,s,\mu,\bar{\varrho}),
			&\quad
			\text{if }0<s\leq\frac{1}{(1-b)p'}.
		\end{cases}
	\end{align}
	
	For $s\leq0$, the right-hand side of
	\eqref{denominator2} tends to $0$, and hence
	\eqref{FinslerhARDdenome} yields
	\[
	\lim_{\iota\rightarrow0^+}
	\mathscr{C}_{p,a,b,s}(w_\iota)=0.
	\]
	For $s>0$, since
	$
	b\leq\frac{p+1}{2p}$ and
	$s\leq2$,
	we have
	\[
	p+1-bps\geq0.
	\]
	If $p+1-bps>0$, the right-hand side of
	\eqref{denominator2} again tends to $0$, and
	\eqref{FinslerhARDdenome} yields
	\[
	\lim_{\iota\rightarrow0^+}
	\mathscr{C}_{p,a,b,s}(w_\iota)=0.
	\]
If $p+1-bps=0$, by $
	b\leq\frac{p+1}{2p}$ and
	$s\leq2$ again, we have
\[
p+1=bps \le b p\cdot 2 \le \frac{p+1}{2p}\cdot 2p = p+1.
\]
Thus equality must hold throughout, which forces $s=2$ and $b=\frac{p+1}{2p}$.
	\[
	b=\frac{p+1}{2p},
	\qquad
	s=2.
	\]
	Then, by \eqref{FinslerhARDdenome} and \eqref{denominator2}, we have
	\begin{align*}
		\mathscr{C}_{p,a,b,s}(w_\iota)
		&\leq
		\frac{e^a(1+2\hat{\varrho})^{n+1}}
		{\hat{\varrho}^{n+1}}
		\mathcal{\bar{C}}_2^{\frac{1}{p}}
		(n,b,p,s,\mu,\bar{\varrho})
		\mathcal{C}_3^{\frac{1}{p'}}
		(b,p,s,\mu,\bar{\varrho})
		\frac{(-\ln\iota)^{\frac{1}{p'}}}
		{\frac{1}{2}+\frac{p}{p+\mu}(-\ln\iota)}
		\longrightarrow 0
		\qquad\text{as }\iota\rightarrow0^+,
	\end{align*}
	since $\frac{1}{p'}-1=-\frac{1}{p}<0$. Hence, in Case {\rm b1},
	\[
	\lim_{\iota\rightarrow0^+}
	\mathscr{C}_{p,a,b,s}(w_\iota)=0.
	\]

	\medskip
	\subparagraph{\bf{Case b2: $(1-b)p's-2>-1$.}}
	In this case, let
	$\bar z:=\iota(a-1)r^{1+\frac{\mu}{p}}$.
	Since
	\[
	\frac{p[(1-b)p's-2]-\mu}{p+\mu}>-1,
	\]
	we have
	\begin{align*}
 \int_1^{+\infty} e^{-\iota(a-1)p'r^{1+\frac{\mu}{p}}}
		r^{(1-b)p's-2}{\dd}r
		&\leq
		\int_1^{+\infty}
		e^{-\iota(a-1)r^{1+\frac{\mu}{p}}}
		r^{(1-b)p's-2}{\dd}r
		\\
		&\leq
		\iota^{\frac{p[1-(1-b)p's]}{p+\mu}}
		(a-1)^{\frac{p[1-(1-b)p's]}{p+\mu}}
		\frac{p}{p+\mu}
		\int_0^{+\infty}
		e^{-\bar z}
		\bar z^{\frac{p[(1-b)p's-2]-\mu}{p+\mu}}
		{\dd}\bar z
		\\
		&=
		\iota^{\frac{p[1-(1-b)p's]}{p+\mu}}
		(a-1)^{\frac{p[1-(1-b)p's]}{p+\mu}}
		\frac{p}{p+\mu}
		\Gamma\left(
		\frac{p[(1-b)p's-2]-\mu}{p+\mu}+1
		\right)
		\\
		&=:
		\iota^{\frac{p[1-(1-b)p's]}{p+\mu}}
		\mathcal C_4(a,b,p,s,\mu).
	\end{align*}
	Then it follows from
	\eqref{K=0_v_numer_second1_general}
	and $\iota\in(0,1)$ that
	\begin{align}\label{denominator2_}
		\int_{\Omega}
		{|w_\iota|^{(a-1)p'}}
		r^{(1-b)p's}{\dd}x
		&\leq
		\iota^{\frac{p[1-(1-b)p's]}{p+\mu}}
		\left(
		1+{\bar{\varrho}}^{n+1}
		\mathcal C_4(a,b,p,s,\mu)
		\right)
		\mathscr I_{\m}(\mathbf0)
		\notag\\
		&=:
		\iota^{\frac{p[1-(1-b)p's]}{p+\mu}}
		\mathcal{\bar C}_4
		(a,b,p,s,\mu,\bar{\varrho})
		\mathscr I_{\m}(\mathbf0).
	\end{align}
	Together with \eqref{denominator1} and
	\eqref{denominator2_}, this yields
	\begin{align}\label{fenzi1}
		&\left(
		\int_{\Omega}
		r^{bps}F^{*p}({\dd}w_\iota){\dd}x
		\right)^{1/p}
		\left(
		\int_{\Omega}
		{|w_\iota|^{(a-1)p'}}
		r^{(1-b)p's}{\dd}x
		\right)^{1/p'}
		\notag\\
		\leq&\,
		\iota^{
			\frac{p+1-bps}{p+\mu}
			+
			\frac{p[1-(1-b)p's]}{p'(p+\mu)}
		} \mathscr I_{\m}(\mathbf0)
		\left[
		\mathcal{\bar C}_2
		(n,b,p,s,\mu,\bar{\varrho})
		\right]^{\frac1p} \left[
		\mathcal{\bar C}_4
		(a,b,p,s,\mu,\bar{\varrho})
		\right]^{\frac1{p'}}.
	\end{align}
	The power of $\iota$ on the right-hand side is
	\begin{align}\label{jisuan}
		\frac{p+1-bps}{p+\mu}
		+
		\frac{p[1-(1-b)p's]}{p'(p+\mu)}
		=
		\frac{p(2-s)}{p+\mu}
		\geq0.
	\end{align}
	Therefore, by \eqref{FinslerhARDdenome},
	\begin{align*}
		\mathscr C_{p,a,b,s}(w_\iota)
		&\leq
		\frac{
			e^a(1+2\hat{\varrho})^{n+1}
		}{
			\hat{\varrho}^{n+1}
		}
		\mathcal{\bar C}_2^{\frac1p}
		(n,b,p,s,\mu,\bar{\varrho})
		\mathcal{\bar C}_4^{\frac1{p'}}
		(a,b,p,s,\mu,\bar{\varrho})
		\\
		&\qquad\cdot
		\frac{
			\iota^{\frac{p(2-s)}{p+\mu}}
		}{
			\frac12+
			\frac{p}{p+\mu}
			\iota^{\frac{p(2-s)}{p+\mu}}
			(-\ln\iota)
		}
		\longrightarrow 0
		\qquad
		\text{as }\iota\rightarrow0^+.
	\end{align*}
	Indeed, if $s<2$, the numerator tends to $0$, while if $s=2$, the denominator tends to $+\infty$.
	Hence
	\begin{equation}\label{faillimfinal}
		\lim_{\iota\rightarrow0^+}
		\mathscr C_{p,a,b,s}(w_\iota)=0.
	\end{equation}
	
	\medskip
	\paragraph{\emph{Step 4: Passing to compactly supported functions.}}
	For each sufficiently small $\epsilon$, set
	\begin{equation*}
		w_{\iota,\epsilon}
		=
		\begin{cases}
			-e^{-\iota\epsilon^{1+\frac{\mu}{p}}},
			&\quad\text{if }r<\epsilon,\\
			-e^{-\iota r^{1+\frac{\mu}{p}}},
			&\quad\text{if }r\geq\epsilon,
		\end{cases}
		\qquad
		\tilde w_{\iota,\epsilon}
		:=
		\min\{0,w_{\iota,\epsilon}+\epsilon\}.
	\end{equation*}
For $r$ large, $w_{\iota,\epsilon} + \epsilon >0$, hence $\tilde w_{\iota,\epsilon}=0$;	thus
	$\tilde w_{\iota,\epsilon}\in
	\Lip_0(\Omega)\backslash\{0\}$.
	A direct calculation yields
	\begin{equation}\label{failfinallip}
		\lim_{\epsilon\rightarrow0^+}
		\mathscr C_{p,a,b,s}
		(\tilde w_{\iota,\epsilon})
		=
		\mathscr C_{p,a,b,s}(w_\iota)
		<+\infty.
	\end{equation}
	Now Lemma \ref{lemma:Cpab_well} combined with
	\eqref{faillimfinal} and \eqref{failfinallip}
	implies
	\[
	\inf_{f\in C_0^\infty(\Omega)\setminus\{0\}}
	\mathscr C_{p,a,b,s}(f)
	=
	\inf_{f\in\Lip_0(\Omega)\setminus\{0\}}
	\mathscr C_{p,a,b,s}(f)
	\leq
	\lim_{\iota\rightarrow0^+}
	\mathscr C_{p,a,b,s}(w_\iota)
	=
	0.
	\]
	This completes the proof.
\end{proof}

\begin{remark}\label{rem:CKN_fail_consequences}
As immediate consequences of Theorem~\ref{thm:CKNHPW_threshold}\eqref{1.2(ii)}, we obtain the failure of the classical $L^p$-Hardy inequality and of the HPW uncertainty principle in the same setting.
Indeed, taking $a=p$ and $b=0$ in Theorem~\ref{thm:CKNHPW_threshold}\eqref{1.2(ii)}, we have $bps+n=n>0$ and $(1-b)p's+n=p's+n>0$, while the conditions $s\le2$ and $1-n<s$ are satisfied. Then:
\begin{enumerate}
\item[(i)]
For $s=1-p$, the inequality in Theorem~\ref{thm:CKNHPW_threshold}\eqref{1.2(ii)} reduces to the failure of the $L^p$-Hardy inequality:
\[
\inf_{f\in C_0^\infty(\Omega)\setminus\{0\}}
\frac{\int_\Omega F^{*p}({\dd}f)\,{\dd}x}{\int_\Omega |f|^p r^{-p}\,{\dd}x}=0.
\]
Indeed, for \(a=p\), \(b=0\), and \(s=1-p\), the quotient in Theorem~\ref{thm:CKNHPW_threshold}\eqref{1.2(ii)} becomes
\[
\left(\frac{\int_\Omega F^{*p}(df)\,dx}{\int_\Omega |f|^p r^{-p}\,dx}\right)^{1/p},
\]
so its vanishing is equivalent to the failure of the \(L^p\)-Hardy inequality.
\item[(ii)] For $s\in(1-p,2]$, it reduces to the failure of the generalized HPW uncertainty principle:
\[
\inf_{f\in C_0^\infty(\Omega)\setminus\{0\}}
\frac{\left(\int_\Omega F^{*p}({\dd}f)\,{\dd}x\right)^{1/p}
\left(\int_\Omega |f|^p r^{p's}\,{\dd}x\right)^{1/p'}}
{\int_\Omega |f|^p r^{s-1}\,dx}=0.
\]
\end{enumerate}
\end{remark}

\vskip 5mm


\begin{thebibliography}{10}

\bibitem{BCS} D.~Bao, S.~S.~Chern, Z.~Shen, \textit{An Introduction to Riemannian--Finsler Geometry}, GTM, vol.~\textbf{200}, Springer--Verlag, 2000.

\bibitem{Berchio}
E.~Berchio, D.~Ganguly, G.~Grillo,
\textit{Sharp Poincar\'e--Hardy and Poincar\'e--Rellich inequalities on the hyperbolic space},
J.~Funct.~Anal., \textbf{272} (2017), 1661--1703.

\bibitem{Be1} L.~Berwald, \textit{Parallel\"ubertragung in allgemeinen R\"aumen}, Atti Congr.~Intern.~Mat.~Bologna, \textbf{4} (1928), 263--270.

\bibitem{Be2} L.~Berwald, \textit{\"Uber die n-dimensionalen Geometrien konstanter Kr\"ummung, in denen die Geraden die k\"urzesten sind}, Math.~Z., \textbf{30} (1929), 449--469.



\bibitem{Bryant1} R. Bryant, \textit{ Finsler structures on the 2-sphere satisfying $K = 1$}, Finsler Geometry,
Contemporary Mathematics \textbf{196}, Amer. Math. Soc., Providence,
RI, 1996, 27-42.
\bibitem{Bryant2} R. Bryant, \textit{ Projectively flat Finsler 2-spheres of constant curvature}, Selecta Math.,
New Series, \textbf{3}(1997), 161-204.

\bibitem{CKN} L.~Caffarelli, R.~Kohn, L.~Nirenberg, \textit{First order interpolation inequalities with
weights}, Compos. Math. \textbf{53} (1984) 259--275.

\bibitem{doCarmo}
M.~P. do Carmo, C.~Xia,
\textit{Complete manifolds with non-negative Ricci curvature and the Caffarelli--Kohn--Nirenberg inequalities},
Compos.~Math., \textbf{140} (2004), 818--826.

\bibitem{Carron}
G.~Carron,
\textit{In\'egalit\'es de Hardy sur les vari\'et\'es riemanniennes non-compactes},
J.~Math.~Pures Appl., \textbf{76} (1997), 883--891.

\bibitem{CFL} C.~Cazacu, J.~Flynn, N.~Lam, \textit{Sharp second order uncertainty principles}, J.~Funct.~Anal., \textbf{283}(10) (2022), 109659, 25~pp.

\bibitem{CFLL} C.~Cazacu, J.~Flynn, N.~Lam, G.~Lu, \textit{Caffarelli--Kohn--Nirenberg identities, inequalities and their stabilities}, J.~Math.~Pures~Appl., \textbf{182}(9) (2024), 253--284.



\bibitem{Ch1} S.~S.~Chern, \textit{Finsler geometry is just Riemannian geometry without the quadratic restriction}, Notices Amer.~Math.~Soc., \textbf{43}(9) (1996), 959--963.

\bibitem{CKZ} N.~Cui, A.~Krist\'aly, W.~Zhao, \textit{Sharp Hardy Inequalities Involving Distance Functions From Submanifolds of Riemannian Manifolds}, J.~Geom.~Anal., \textbf{35} (2025), 379.

\bibitem{Dambrosio}
L.~D'Ambrosio, S.~Dipierro,
\textit{Hardy inequalities on Riemannian manifolds and applications},
Ann.~Inst.~H.~Poincar\'e Anal.~Non Lin\'eaire, \textbf{31} (2014), 449--475.

\bibitem{FFK} F.~Faraci, C.~Farkas, A.~Krist\'aly, \textit{Multipolar Hardy inequalities on Riemannian manifolds}, ESAIM: COCV, \textbf{24}(2) (2018), 551--567.

\bibitem{HanXu2024}
B.-X.~Han, Z.-F.~Xu,
\textit{Sharp uncertainty principles on metric measure spaces},
Calc.~Var.~Partial Differential Equations, \textbf{63}(4) (2024), Paper No.~104, 16~pp.

\bibitem{HKZ} L.~Huang, A.~Krist\'aly, W.~Zhao, \textit{Sharp uncertainty principles on general Finsler manifolds}, Trans.~Amer.~Math.~Soc., \textbf{373} (2020), 8127--8161.


\bibitem{Ka} S.~Kaj\'ant\'o, \textit{Sharp Hardy and spectral gap inequalities on special irreversible Finsler manifolds}, Proc.~Roy.~Soc.~Edinburgh Sect.~A, \textsf{First~View} (2025), 1--36, {https://doi.org/10.1017/prm.2025.10072}.



\bibitem{KKPZ} S.~Kaj\'ant\'o, A.~Krist\'aly, I.~R.~Peter, W.~Zhao, \textit{A generic functional inequality and Riccati pairs: an alternative approach to Hardy-type inequalities}, Math.~Ann., \textbf{390} (2024), 3621--3663.


\bibitem{Kombe1}
I.~Kombe, M.~\"Ozaydin,
\textit{Improved Hardy and Rellich inequalities on Riemannian manifolds},
Trans.~Amer.~Math.~Soc., \textbf{361} (2009), 6191--6203.

\bibitem{Kombe2}
I.~Kombe, M.~\"Ozaydin,
\textit{Hardy--Poincar\'e, Rellich and uncertainty principle inequalities on Riemannian manifolds},
Trans.~Amer.~Math.~Soc., \textbf{365} (2013), 5035--5050.


\bibitem{Kristaly2015}
A.~Krist\'aly,
\textit{A sharp Sobolev interpolation inequality on Finsler manifolds},
J.~Geom.~Anal., \textbf{25} (2015), 2226--2240.

\bibitem{Kristaly2018}
A.~Krist\'aly,
\textit{Sharp uncertainty principles on Riemannian manifolds: the influence of curvature},
J.~Math.~Pures Appl., \textbf{119} (2018), 326--346.





\bibitem{KLZ} A.~Krist\'aly, B.~Li, W.~Zhao, \textit{Failure of famous functional inequalities on Finsler manifolds: the influence of $S$-curvature}, Math.\ Z.,\ \textbf{313}(4)(2026), Paper No. 81.

\bibitem{KristalyOhta}
A.~Krist\'aly, S.-I. Ohta,
\textit{Caffarelli--Kohn--Nirenberg inequality on metric measure spaces with applications},
Math.~Ann., \textbf{357} (2013), 711--726.

\bibitem{KristalyRepovs}
A.~Krist\'aly, D.~Repov\v{s},
\textit{Quantitative Rellich inequalities on Finsler--Hadamard manifolds},
Commun.~Contemp.~Math., \textbf{18} (2016), 1650020.

\bibitem{KR} A.~Krist\'aly, I.~J.~Rudas, \textit{Elliptic problems on the ball endowed with Funk-type metrics}, Nonlinear Anal., \textbf{199} (2015), 199--208.

\bibitem{KristalySzakal2019}
A.~Krist\'aly, P.~Szak\'al,
\textit{Interpolation between Brezis--V\'azquez and Poincar\'e inequalities on nonnegatively curved spaces: sharpness and rigidities},
J.~Differential Equations, \textbf{266} (2019), 6621--6646.

\bibitem{Li} B.~Li, \textit{On the classification of projectively flat Finsler metrics with constant flag curvature}, Adv.~in~Math., \textbf{257} (2014), 266--284.

\bibitem{LZ} B.~Li and W.~Zhao, \textit{Unexpected analytic phenomena on Finsler manifolds}, J. Differential Equations \textbf{481}(2026), 114659.





\bibitem{LiZhao_Hilbert4} B.~Li and W.~Zhao, \textit{Hilbert's fourth problem in the constant curvature setting}, preprint, 2025. arXiv:2510.25171v3

\bibitem{XiaLi2026}
R.~Li, Q.~Xia,
\textit{Sharp \(L^p\)-uncertainty principles on Finsler measure spaces}, preprint, 2026.
arXiv:2607.11227

\bibitem{MST2020}
A.~Mercaldo, M.~Sano, F.~Takahashi,
\textit{Finsler Hardy inequalities},
Math.~Nachr., \textbf{293}(12) (2020), 2370--2398.

\bibitem{MPV2020}
\'A.~Mester, I.~R.~Peter, C.~Varga,
\textit{Sufficient criteria for obtaining Hardy inequalities on Finsler manifolds},
Mediterr.~J.~Math., \textbf{18}(2) (2021), Paper No.~76, 18~pp.


\bibitem{Ohta1} S.-I.~Ohta, \textit{Comparison Finsler geometry}, Springer Monogr.~Math., Springer,  Cham, 2021.

\bibitem{Rade1} H.-B.~Rademacher, \textit{A sphere theorem for non-reversible Finsler metrics}, Math.~Ann., \textbf{328}(3) (2004), 373--387.

\bibitem{Rade} H.-B.~Rademacher, \textit{Non-reversible Finsler metrics of positive flag curvature}, in: \textit{A sampler of Riemann--Finsler geometry}, Math.~Sci.~Res.~Inst.~Publ., vol.~\textbf{50}, Cambridge Univ.~Press, Cambridge, 2004, pp.~261--302.




\bibitem{ShenSpray} Z.~Shen, \textit{Differential geometry of spray and Finsler spaces}, Kluwer Acad.~Publ., Dordrecht, 2001.

\bibitem{ShenLecture} Z.~Shen, \textit{Lectures on Finsler geometry}, World Sci.~Publ., Singapore, 2001.



\bibitem{Sh1} Z.~Shen, \textit{Projectively flat Finsler metrics of constant flag curvature}, Trans.~Amer.~Math.~Soc., \textbf{355}(4) (2003), 1713--1728.

\bibitem{Warner} F.~W.~Warner, \textit{Foundations of Differentiable Manifolds and Lie Groups}, Springer-Verlag, 1983.

\bibitem{WeiWu2020}
S.~W.~Wei, B.~Y.~Wu,
\textit{Generalized Hardy type and Caffarelli--Kohn--Nirenberg type inequalities on Finsler manifolds},
Internat.~J.~Math., \textbf{31}(13) (2020), 2050109, 27~pp.

\bibitem{Xia}
C.~Xia,
\textit{The Caffarelli--Kohn--Nirenberg inequalities on complete manifolds},
Math.~Res.~Lett., \textbf{14} (2007), 875--885.

\bibitem{Yang}
Q.~Yang, D.~Su, Y.~Kong,
\textit{Hardy inequalities on Riemannian manifolds with negative curvature},
Commun.~Contemp.~Math., \textbf{16} (2014), 1350043.

\bibitem{ZhaoHardy} W.~Zhao, \textit{Hardy inequalities with best constants on Finsler metric measure manifolds}, J.~Geom.~Anal., \textbf{31}(2) (2021), 1992--2032.

\bibitem{ZS} W.~Zhao, Y.~Shen, \textit{A universal volume comparison theorem for Finsler manifolds and related results}, Canad.~J.~Math., \textbf{65}(6) (2013), 1401--1435.














































\end{thebibliography}
\end{document}